\author{A.A. Vasil'eva\footnote{Faculty of mechanics and mathematics, Lomonosov Moscow State University; Moscow Center for Fundamental and Applied Mathematics}}
\title{Kolmogorov widths of an intersection of a family of balls in a mixed norm\footnote{This research was conducted at Lomonosov Moscow State University with the support of the Russian Science Foundation (grant no.\ 22-21-00204).}}
\date{}
\begin{document}

\maketitle

\newenvironment{Biblio}{%
                  \renewcommand{\refname}{\footnotesize REFERENCES}%
                  }

\def\inff{\mathop{\smash\inf\vphantom\sup}}
\renewcommand{\le}{\leqslant}
\renewcommand{\ge}{\geqslant}
\newcommand{\sgn}{\mathrm {sgn}\,}
\newcommand{\inter}{\mathrm {int}\,}
\newcommand{\dist}{\mathrm {dist}}
\newcommand{\supp}{\mathrm {supp}\,}
\newcommand{\R}{\mathbb{R}}
\newcommand{\Z}{\mathbb{Z}}
\newcommand{\N}{\mathbb{N}}
\newcommand{\Q}{\mathbb{Q}}
\theoremstyle{plain}
\newtheorem{Trm}{Theorem}
\newtheorem{trma}{Theorem}

\newtheorem{Def}{Definition}
\newtheorem{Cor}{Corollary}
\newtheorem{Lem}{Lemma}
\newtheorem{Rem}{Remark}
\newtheorem{Sta}{Proposition}

\newtheorem{Exa}{Example}
\renewcommand{\proofname}{\bf Proof}
\renewcommand{\thetrma}{\Alph{trma}}

\begin{abstract}
In this paper, order estimates for the Kolmogorov $n$-widths of an intersection of a family of balls in a mixed norm in the space $l^{m,k}_{q,\sigma}$ with $2\le q, \, \sigma <\infty$, $n\le mk/2$ are obtained.
\end{abstract}

\section{Introduction}

In this paper a problem of the Kolmogorov widths of an intersection of an arbitrary family of finite-dimensional balls  in a mixed norm is studied. The obtained estimates can be applyed in solving the problem of the widths of an intersection of weighted Besov classes with a strong singularity at a point, or an intersection of Besov classes  with dominating mixed smoothness. For a single Besov class, the problem was considered in \cite{galeev4, dir_ull, vyb_06, vas_besov}.

First we give some definitions.

Let $N\in \N$, $1\le s\le \infty$, $(x_i)_{i=1}^N\in \R^N$. We set $\|(x_i)_{i=1}^N\|_{l_s^N} = \left(\sum \limits _{i=1}^N |x_i|^s\right)^{1/s}$ for $s<\infty$, and $\|(x_i)_{i=1}^N\|_{l_s^N} = \max _{1\le i\le N}|x_i|$ for $s=\infty$. The space $\R^N$ equipped with this norm we denote by $l_s^N$; by $B_s^N$, we denote the unit ball in $l_s^N$.

Let now $m, \, k\in \N$, $1\le p\le \infty$, $1\le \theta\le \infty$. By $l_{p,\theta}^{m,k}$ we denote the space $\R^{mk}= \{(x_{i,j})_{1\le i\le m, \, 1\le j\le k}:\; x_{i,j}\in \R\}$ with the norm
$$
\|(x_{i,j})_{1\le i\le m, \, 1\le j\le k}\|_{l_{p,\theta}^{m,k}} = \left\|\bigl(\|(x_{i,j})_{i=1}^m\|_{l_p^m}\bigr)_{j=1}^k\right\|_{l_\theta^k}.
$$
By $B_{p,\theta}^{m,k}$ we denote the unit ball of $l_{p,\theta}^{m,k}$.

Let $X$ be a normed space, let $M\subset X$, $n\in \Z_+$, and let ${\cal L}_n(X)$ be the family of all subspaces in $X$ of dimension at most $n$. The Kolmogorov $n$-width of the set $M$ in the space $X$ is defined by
$$
d_n(M, \, X) = \inf _{L\in {\cal L}_n(X)} \sup _{x\in M} \inf
_{y\in L} \|x-y\|.
$$

The problem of estimating the widths $d_n(B_p^m, \, l_q^m)$ was studied in \cite{k_p_s, stech_poper, pietsch1, stesin, gluskin1, bib_gluskin, kashin_oct, bib_kashin, garn_glus}. For $p\ge q$ and for $p=1$, $q=2$ the exact values were found; for $p<q<\infty$ and for $q=\infty$, $p\ge 2$ order estimates were obtained. For $q=\infty$, $1\le p<2$ the estimates are known up to a factor, which is a degree of $\log \left(\frac{em}{n}\right)$. For details, see \cite{itogi_nt, kniga_pinkusa, nvtp}.

The problem of estimating the widths $d_n(B^{m,k}_{p,\theta}, \, l^{m,k}_{q,\sigma})$ was investigated in \cite{galeev2, galeev5, izaak1, izaak2, mal_rjut, vas_besov, dir_ull}. The order estimates for $n \le \frac{mk}{2}$ are known in the following cases:
\begin{enumerate}
\item $p=1$, $\theta=\infty$, $q=2$, $1<\sigma <\infty$ \cite{galeev2};
\item $p=1$ or $p=\infty$; $\theta=\infty$; here one of the following conditions holds: a) $q=2$, $1<\sigma \le \infty$, or b) $1<q\le \min \{2, \, \sigma\}$ \cite{galeev5};
\item $p=\theta$, $q=2$, $\sigma=1$; here $p=1$ or $2\le p\le \infty$ \cite{izaak2};
\item $2\le q<\infty$, $2\le \sigma <\infty$, $1\le p\le q$, $1\le \theta \le \sigma$ (see \cite{vas_besov} for $n\le a(q, \, \sigma)mk$, where $a(q, \, \sigma)>0$, and \cite{vas_mix2}, for $a(q, \, \sigma)mk \le n\le \frac{mk}{2}$);
\item a) $p=1$, $\theta=\infty$, $q=2$, $\sigma =1$; b) $p\le q\le 2$, $\theta \ge \sigma$ \cite{mal_rjut};
\item a) $p=q=2$, $\theta\ge 2$, $\sigma=\infty$; b) $p=\theta=\sigma\ge 2$, $q=\infty$ \cite{dir_ull}; the result is formulated in terms of the dual quantities (the Gelfand widths).
\end{enumerate}
In addition, in \cite{galeev6} the lower estimate of $d_n(B^{m,k}_{p,\infty}, \, l^{m,k}_{q, \, q})$ was obtained for $2\le q<\infty$, $n\le c(q)mk$, where $c(q)$ is a positive number.

The problem of estimating the widths $d_n(\cap _{\alpha \in A} \nu_\alpha B^m_{p_\alpha}, \, l_q^m)$ was studied in \cite{galeev1, vas_ball_inters} (here $p_\alpha\in [1, \, \infty]$, $\nu_\alpha >0$, $\alpha\in A$, $A\ne \varnothing$). In \cite{galeev1} the order estimates were obtained for $n= \frac{m}{2}$, and in \cite{vas_ball_inters}, for $n \le \frac{m}{2}$. These results were applyed to estimating the widths of an intersection of Sobolev classes (see, e.g., \cite{galeev2, vas_int_sob}).

Let $A$ be a nonempty set, and let $p_\alpha \in [1, \infty]$, $\theta_\alpha \in [1, \, \infty]$, $\nu_\alpha>0$ be given for each $\alpha \in A$; we suppose that $(p_\alpha, \, \theta_\alpha) \ne (p_\beta, \, \theta_\beta)$ for $\alpha \ne \beta$. Let
\begin{align}
\label{m_def} M = \cap _{\alpha\in A} \nu_\alpha B^{m,k}_{p_\alpha, \, \theta_\alpha}.
\end{align}

In this paper order estimates of $d_n(M, \, l_{q,\sigma}^{m,k})$ for $2\le q<\infty$, $2\le \sigma <\infty$, $n \le \frac{mk}{2}$ are obtained. We need some more notation.

Let $X$, $Y$ be sets, and let $f_1$, $f_2:\ X\times Y\rightarrow \mathbb{R}_+$.
We write $f_1(x, \, y)\underset{y}{\lesssim} f_2(x, \, y)$ (or
$f_2(x, \, y)\underset{y}{\gtrsim} f_1(x, \, y)$) if, for each 
$y\in Y$, there exists $c(y)>0$ such that $f_1(x, \, y)\le
c(y)f_2(x, \, y)$ for all $x\in X$; $f_1(x, \,
y)\underset{y}{\asymp} f_2(x, \, y)$ if $f_1(x, \, y)
\underset{y}{\lesssim} f_2(x, \, y)$ and $f_2(x, \,
y)\underset{y}{\lesssim} f_1(x, \, y)$.

Given $1\le p\le q$, we set 
\begin{align}
\label{om_pq} \omega_{p,q} =\begin{cases} \min \left\{\frac{1/p-1/q}{1/2-1/q}, \, 1 \right\} & \text{if } q>2 \\ 1, & \text{if } q=2.\end{cases}
\end{align}

We define the values $\Phi(p, \, \theta)=\Phi(p, \, \theta; \, q, \, \sigma, \, m, \, k, \, n)$ as follows:
\begin{enumerate}
\item for $p\ge q$, $\theta \ge \sigma$,
\begin{align}
\label{phi1} \Phi(p, \, \theta) = m^{1/q-1/p}k^{1/\sigma-1/\theta};
\end{align}
\item for $p\ge q$, $\theta\le \sigma$,
\begin{align}
\label{phi2} \Phi(p, \, \theta) = \min \left\{m^{1/q-1/p}, \, m^{1/q-1/p}(n^{-\frac 12}m^{\frac 12} k^{\frac{1}{\sigma}}) ^{\omega_{\theta,\sigma}}\right\};
\end{align}

\item for $\theta\ge \sigma$, $p\le q$,
\begin{align}
\label{phi3} \Phi(p, \, \theta) = \min \left\{k^{1/\sigma-1/\theta}, \, k^{1/\sigma-1/\theta}(n^{-\frac 12}m^{\frac 1q} k^{\frac{1}{2}}) ^{\omega_{p,q}}\right\};
\end{align}

\item for $2\le p\le q$, $1\le \theta \le \sigma$, $\omega_{p,q}\le \omega_{\theta,\sigma}$,
\begin{align}
\label{phi4}
\Phi(p, \, \theta) = \min \left\{1, \, (n^{-\frac 12}m^{\frac 1q} k^{\frac{1}{\sigma}})^{\omega_{p,q}}, \, m^{1/q-1/p}(n^{-\frac 12}m^{\frac 12} k^{\frac{1}{\sigma}})^{\omega_{\theta,\sigma}}\right\};
\end{align}

\item for $2\le \theta\le \sigma$, $1\le p \le q$, $\omega_{\theta,\sigma}\le \omega_{p,q}$,
\begin{align}
\label{phi5}
\Phi(p, \, \theta) = \min \left\{1, \, (n^{-\frac 12}m^{\frac 1q} k^{\frac{1}{\sigma}})^{\omega_{\theta,\sigma}}, \, k^{1/\sigma-1/\theta}(n^{-\frac 12}m^{\frac 1q} k^{\frac{1}{2}})^{\omega_{p,q}}\right\};
\end{align}
\item for $1\le p\le 2$, $1\le \theta\le 2$,
\begin{align}
\label{phi6}
\Phi(p, \, \theta) = \min \{1, \, n^{-\frac 12}m^{\frac 1q} k^{\frac{1}{\sigma}}\}.
\end{align}
\end{enumerate}

We claim that if $2\le q<\infty$, $2\le \sigma <\infty$, $n\le mk/2$, then
\begin{align}
\label{d_n_bmkpt_kolm}
d_n(B^{m,k}_{p,\theta}, \, l^{m,k}_{q,\sigma}) \underset{q,\sigma}{\asymp} \Phi(p, \, \theta).
\end{align}
For $1\le p\le q$, $1\le \theta\le \sigma$, it was proved in \cite{vas_besov} for $n\le a(q, \, \sigma)mk$, and in \cite{vas_mix2}, for $a(q, \, \sigma)mk \le n \le mk/2$; for the other cases it will be proved in \S 2 (see Propositions \ref{pgq} and \ref{est_dn}).

Given $\alpha$, $\beta$, $\gamma \in A$, by $\Delta_{\alpha,\beta,\gamma}$ we denote the triangle with the vertices $(1/p_\alpha, \, 1/\theta_\alpha)$, $(1/p_\beta, \, 1/\theta_\beta)$, $(1/p_\gamma, \, 1/\theta_\gamma)$. We write $\Delta_{\alpha,\beta,\gamma} \in {\cal R}$ if the vertices do not lie on the same line.

We set $P=\{(p_\alpha, \, \theta_\alpha)\}_{\alpha\in A}$. The function $\nu:A \rightarrow (0, \, \infty)$ is defined as $\alpha \stackrel{\nu}{\mapsto}\nu_\alpha$. The sets ${\cal N}_j={\cal N}_j(P)={\cal N}_j(P; \, q, \, \sigma)$ ($1\le j\le 7$) and the values $\Psi_j=\Psi_j(P)= \Psi_j(P; \, q, \, \sigma; \, m, \, k, \, n; \, \nu)$ ($0\le j\le 7$) are defined as follows:
\begin{align}
\label{n1} \begin{array}{c} {\cal N}_1= \left\{ (\alpha, \, \beta)\in A\times A:\;p_\alpha \ne q, \; \exists \hat \lambda _{\alpha,\beta} \in (0, \, 1):\; \frac 1q = \frac{1-\hat \lambda _{\alpha,\beta}}{p_\alpha} + \frac{\hat \lambda _{\alpha,\beta}}{p_\beta}\right\}, \\ \frac{1}{\hat \theta_{\alpha,\beta}} := \frac{1-\hat \lambda _{\alpha,\beta}}{\theta_\alpha} + \frac{\hat \lambda _{\alpha,\beta}}{\theta_\beta}, \quad (\alpha, \, \beta) \in {\cal N}_1, \end{array}
\end{align}
\begin{align}
\label{n2} \begin{array}{c}{\cal N}_2 = \left\{ (\alpha, \, \beta)\in A\times A:\;\theta_\alpha \ne \sigma, \; \exists \hat \mu _{\alpha,\beta} \in (0, \, 1):\; \frac{1}{\sigma} = \frac{1-\hat \mu _{\alpha,\beta}}{\theta_\alpha} + \frac{\hat \mu _{\alpha,\beta}}{\theta_\beta}\right\}, \\ \frac{1}{\hat p_{\alpha,\beta}} := \frac{1-\hat \mu _{\alpha,\beta}}{p_\alpha} + \frac{\hat \mu _{\alpha,\beta}}{p_\beta}, \quad (\alpha, \, \beta) \in {\cal N}_2, \end{array}
\end{align}
\begin{align}
\label{n3} \begin{array}{c}{\cal N}_3 = \left\{ (\alpha, \, \beta)\in A\times A:\; p_\alpha \ne 2, \; \exists \tilde \lambda _{\alpha,\beta} \in (0, \, 1):\; \frac 12 = \frac{1-\tilde \lambda _{\alpha,\beta}}{p_\alpha} + \frac{\tilde \lambda _{\alpha,\beta}}{p_\beta}\right\}, \\ \frac{1}{\tilde \theta_{\alpha,\beta}} := \frac{1-\tilde \lambda _{\alpha,\beta}}{\theta_\alpha} + \frac{\tilde \lambda _{\alpha,\beta}}{\theta_\beta}, \quad (\alpha, \, \beta) \in {\cal N}_3, \end{array}
\end{align}
\begin{align}
\label{n4} \begin{array}{c}{\cal N}_4 = \left\{ (\alpha, \, \beta)\in A\times A:\;\theta_\alpha \ne 2, \; \exists \tilde \mu _{\alpha,\beta} \in (0, \, 1):\; \frac{1}{2} = \frac{1-\tilde \mu _{\alpha,\beta}}{\theta_\alpha} + \frac{\tilde \mu _{\alpha,\beta}}{\theta_\beta}\right\}, \\ \frac{1}{\tilde p_{\alpha,\beta}} := \frac{1-\tilde \mu _{\alpha,\beta}}{p_\alpha} + \frac{\tilde \mu _{\alpha,\beta}}{p_\beta},\quad (\alpha, \, \beta) \in {\cal N}_4,  \end{array}
\end{align}
\begin{align}
\label{n5} \begin{array}{c}{\cal N}_5 = \left\{ (\alpha, \, \beta)\in A\times A:\; \exists \lambda _{\alpha,\beta} \in (0, \, 1), \; p_{\alpha,\beta}\in (2, \, q), \; \theta_{\alpha,\beta} \in (2, \, \sigma):\right. \\ \left. \frac{1}{p_{\alpha,\beta}} = \frac{1-\lambda _{\alpha,\beta}}{p_\alpha} + \frac{\lambda _{\alpha,\beta}}{p_\beta}, \; \frac{1}{\theta_{\alpha,\beta}} = \frac{1-\lambda _{\alpha,\beta}}{\theta_\alpha} + \frac{\lambda _{\alpha,\beta}}{\theta_\beta}, \; \frac{1/p_{\alpha,\beta}-1/q}{1/2-1/q}=\frac{1/\theta_{\alpha,\beta}-1/\sigma}{1/2-1/
\sigma}, \right. \\ \left. \text{and}\; \frac{1/p_{\alpha}-1/q}{1/2-1/q}\ne \frac{1/\theta_{\alpha}-1/\sigma}{1/2-1/
\sigma}\right\}, \end{array}
\end{align}
\begin{align}
\label{n6} \begin{array}{c}{\cal N}_6 = \left\{ (\alpha, \, \beta, \, \gamma)\in A\times A \times A:\; \exists \tau_\alpha, \, \tau_\beta, \, \tau_\gamma > 0:\; \tau_\alpha+\tau_\beta+\tau_\gamma =1, \right. \\ \left.\frac{1}{q} = \frac{\tau_\alpha}{p_\alpha} + \frac{\tau_\beta}{p_\beta}+\frac{\tau_\gamma}{p_\gamma}, \; \frac{1}{\sigma} = \frac{\tau_\alpha}{\theta_\alpha} + \frac{\tau_\beta}{\theta_\beta}+\frac{\tau_\gamma}{\theta_\gamma}, \; \Delta _{\alpha,\beta,\gamma}\in {\cal R}\right\},\end{array}
\end{align}
\begin{align}
\label{n7} \begin{array}{c}{\cal N}_7 = \left\{ (\alpha, \, \beta, \, \gamma)\in A\times A \times A:\; \exists \overline{\tau}_\alpha, \, \overline{\tau}_\beta, \, \overline{\tau}_\gamma > 0:\; \overline{\tau}_\alpha+\overline{\tau}_\beta+\overline{\tau}_\gamma =1, \right. \\ \left.\frac{1}{2} = \frac{\overline{\tau}_\alpha}{p_\alpha} + \frac{\overline{\tau}_\beta}{p_\beta}+\frac{\overline{\tau}_\gamma}{p_\gamma}, \; \frac{1}{2} = \frac{\overline{\tau}_\alpha}{\theta_\alpha} + \frac{\overline{\tau}_\beta}{\theta_\beta}+\frac{\overline{\tau}_\gamma}{\theta_\gamma}, \; \Delta _{\alpha,\beta,\gamma}\in {\cal R}\right\},\end{array}
\end{align}
\begin{align}
\label{psi0} \Psi_0 =\Psi_0(P) = \inf _{\alpha \in A} \nu_\alpha \Phi(p_\alpha, \, \theta _\alpha),
\end{align}
\begin{align}
\label{psi1} \Psi_1=\Psi_1(P) = \inf _{(\alpha,\, \beta) \in {\cal N}_1} \nu_\alpha^{1-\hat \lambda_{\alpha,\beta}} \nu_\beta ^{\hat \lambda_{\alpha,\beta}} \Phi(q, \, \hat\theta _{\alpha,\beta}),
\end{align}
\begin{align}
\label{psi2} \Psi_2=\Psi_2(P) = \inf _{(\alpha,\, \beta) \in {\cal N}_2} \nu_\alpha^{1-\hat \mu_{\alpha,\beta}} \nu_\beta ^{\hat \mu_{\alpha,\beta}} \Phi(\hat p_{\alpha,\beta}, \, \sigma),
\end{align}
\begin{align}
\label{psi3} \Psi_3 =\Psi_3(P)= \inf _{(\alpha,\, \beta) \in {\cal N}_3} \nu_\alpha^{1-\tilde \lambda_{\alpha,\beta}} \nu_\beta ^{\tilde \lambda_{\alpha,\beta}} \Phi(2, \, \tilde\theta _{\alpha,\beta}),
\end{align}
\begin{align}
\label{psi4} \Psi_4 =\Psi_4(P)= \inf _{(\alpha,\, \beta) \in {\cal N}_4} \nu_\alpha^{1-\tilde \mu_{\alpha,\beta}} \nu_\beta ^{\tilde \mu_{\alpha,\beta}} \Phi(\tilde p_{\alpha,\beta}, \, 2),
\end{align}
\begin{align}
\label{psi5} \Psi_5 =\Psi_5(P)= \inf _{(\alpha,\, \beta) \in {\cal N}_5} \nu_\alpha^{1-\lambda_{\alpha,\beta}} \nu_\beta ^{\lambda_{\alpha,\beta}} \Phi(p_{\alpha,\beta}, \, \theta_{\alpha,\beta}),
\end{align}
\begin{align}
\label{psi6} \Psi_6 =\Psi_6(P)= \inf _{(\alpha,\, \beta,\, \gamma) \in {\cal N}_6} \nu_\alpha^{\tau_\alpha} \nu_\beta ^{\tau_\beta} \nu_\gamma^{\tau_\gamma} \Phi(q, \, \sigma),
\end{align}
\begin{align}
\label{psi7} \Psi_7 =\Psi_7(P)= \inf _{(\alpha,\, \beta,\, \gamma) \in {\cal N}_7} \nu_\alpha^{\overline{\tau}_\alpha} \nu_\beta ^{\overline{\tau}_\beta} \nu_\gamma^{\overline{\tau}_\gamma} \Phi(2, \, 2)
\end{align}
(the infimum of the empty set is $+\infty$); here the values $\hat \theta_{\alpha,\beta}$, $\hat \lambda_{\alpha,\beta}$, etc., are defined in (\ref{n1})--(\ref{n7}).

\begin{Trm}
\label{main} Let $2\le q<\infty$, $2\le \sigma<\infty$, $m$, $k\in \N$, $n\in \Z_+$, $n \le \frac{mk}{2}$, and let the set $M$ be defined by \eqref{m_def}. Then
\begin{align}
\label{dn_main}
d_n(M, \, l^{m,k}_{q,\sigma}) \underset{q,\sigma}{\asymp} \min _{0\le j\le 7} \Psi_j(P; \, q, \, \sigma; \, m, \, k, \, n; \, \nu).
\end{align}
\end{Trm}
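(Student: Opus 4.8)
\emph{Strategy.} The plan is to prove the matching upper and lower bounds separately, taking the single-ball estimate \eqref{d_n_bmkpt_kolm} (Propositions \ref{pgq} and \ref{est_dn}) as the basic building block and exploiting the interpolation structure of the mixed norms, in the spirit of the single-index treatment in \cite{vas_ball_inters}.

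\emph{Upper bound.} The key fact is that, by iterated H\"older's inequality, the mixed norms are log-convex in the exponents: if $\tau_\alpha>0$, $\sum\tau_\alpha=1$ and $\frac1p=\sum_\alpha\frac{\tau_\alpha}{p_\alpha}$, $\frac1\theta=\sum_\alpha\frac{\tau_\alpha}{\theta_\alpha}$, then
\[
\|x\|_{l^{m,k}_{p,\theta}}\le \prod_\alpha\|x\|_{l^{m,k}_{p_\alpha,\theta_\alpha}}^{\tau_\alpha}.
\]
Hence for every such convex combination of the exponent points $(1/p_\alpha,1/\theta_\alpha)$ the intersection satisfies $M\subset\bigl(\prod_\alpha\nu_\alpha^{\tau_\alpha}\bigr)B^{m,k}_{p,\theta}$, so that by \eqref{d_n_bmkpt_kolm}
\[
d_n(M,\,l^{m,k}_{q,\sigma})\underset{q,\sigma}{\lesssim}\Bigl(\textstyle\prod_\alpha\nu_\alpha^{\tau_\alpha}\Bigr)\Phi(p,\,\theta;\,q,\,\sigma,\,m,\,k,\,n).
\]
It remains to show that minimizing the right-hand side over all admissible two- and three-point configurations equals $\min_{0\le j\le 7}\Psi_j$. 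Since $\log\Phi$ is piecewise affine in $(1/p,1/\theta)$ with breaks exactly along the lines $p=q$, $\theta=\sigma$, $p=2$, $\theta=2$ and the diagonal $\omega_{p,q}=\omega_{\theta,\sigma}$ (the case boundaries of \eqref{phi1}--\eqref{phi6}), while $\log\prod_\alpha\nu_\alpha^{\tau_\alpha}$ is affine in the barycentric weights, the minimum along any segment is attained at an endpoint (giving $\Psi_0$) or at a crossing with one of these critical lines (giving $\Psi_1,\dots,\Psi_5$), and over a genuinely two-dimensional triangle at a vertex of the subdivision induced by the critical lines, in particular at their mutual intersections $(1/q,1/\sigma)$ and $(1/2,1/2)$ (giving $\Psi_6$ and $\Psi_7$). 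The interpolation identities defining ${\cal N}_1,\dots,{\cal N}_7$ are precisely the conditions placing the interpolated point on the relevant critical line; this identification is a finite case analysis according to the position of the configuration relative to the grid of critical lines.

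\emph{Lower bound.} This is where the substance lies. Fix an index $j_0$ realizing $\min_{0\le j\le7}\Psi_j$, the associated finite subconfiguration of $A$ (one, two, or three points), the critical exponent point $(p_0,\theta_0)$, and the weight $\nu_0=\prod_\alpha\nu_\alpha^{\tau_\alpha}$ appearing in $\Psi_{j_0}$. The plan is to exhibit, on a coordinate sub-block of size $\mu\times\kappa$ with $\mu\le m$, $\kappa\le k$, a scaled ball $r\,B^{\mu,\kappa}_{p_0,\theta_0}\subset M$, and then to apply the single-ball lower bound together with the fact that widths do not increase under coordinate projection in $l^{m,k}_{q,\sigma}$, giving $d_n(M,l^{m,k}_{q,\sigma})\gtrsim d_n\bigl(r B^{\mu,\kappa}_{p_0,\theta_0},l^{\mu,\kappa}_{q,\sigma}\bigr)\gtrsim r\,\Phi(p_0,\theta_0;q,\sigma,\mu,\kappa,n)$. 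Containment is governed by the block embedding constants $\sup_{\|x\|_{p_0,\theta_0}\le1}\|x\|_{p_\alpha,\theta_\alpha}$, which are explicit powers $\mu^{(1/p_\alpha-1/p_0)_+}\kappa^{(1/\theta_\alpha-1/\theta_0)_+}$ of the block dimensions; imposing $r\cdot(\text{embedding constant})_\alpha\le\nu_\alpha$ for all $\alpha$ and then maximizing $r\,\Phi(p_0,\theta_0;q,\sigma,\mu,\kappa,n)$ over the free parameters $\mu,\kappa$ is designed to reproduce $\Psi_{j_0}$.

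\emph{Main obstacle.} The hard part is the lower bound, for two reasons. First, the block $(\mu,\kappa,p_0,\theta_0,r)$ must be chosen so that the constraints indexed by the inactive $\alpha\in A$ are automatically slack while the active two or three constraints bind; this forces the choice to be dictated by the supporting-hyperplane geometry of the convex hull of $\{(1/p_\alpha,1/\theta_\alpha)\}_{\alpha\in A}$, and one must verify that the extremal configuration computed in the upper bound is exactly the one that can be realized here. Second, the diagonal case $\Psi_5$ and the three-point cases $\Psi_6,\Psi_7$ require genuinely two-parameter tuning of the block shape, and matching the resulting exponents of $m$, $k$, $n$ against \eqref{phi1}--\eqref{phi6} is the most delicate bookkeeping; the nondegeneracy hypothesis $\Delta_{\alpha,\beta,\gamma}\in{\cal R}$ enters here to guarantee that the triangle has positive area, so that $\mu$ and $\kappa$ can be varied independently to attain the claimed order.
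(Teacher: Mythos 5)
Your upper bound is essentially the paper's own argument (H\"older log-convexity of the mixed norms, i.e.\ Lemma \ref{lem_int_emb} and Corollary \ref{emb_s}, combined with the single-ball estimates), and it is correct: each $\Psi_j$ comes from an admissible configuration, so the minimum is automatically an upper bound. The gap is in the lower bound, and it is fatal to the mechanism you propose. You want to place a scaled ball $r\,B^{\mu,\kappa}_{p_0,\theta_0}$ on a coordinate sub-block and invoke the single-ball lower bound on that block. But any set contained in a coordinate subspace of dimension $\mu\kappa$ has $d_n=0$ once $n\ge \mu\kappa$, and in the present problem the binding containment constraints \emph{force} block shapes with $\mu\kappa\le n$. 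Indeed, the containment conditions $r\cdot\mu^{(1/p_\alpha-1/p_0)_+}\kappa^{(1/\theta_\alpha-1/\theta_0)_+}\le\nu_\alpha$ determine $\mu,\kappa$ (up to rounding) from the ratios $\nu_\alpha/\nu_\beta$, exactly like the quantities $r_{\alpha,\beta}$, $l_{\alpha,\beta}$ in the paper; and in the regime where the relevant value of $\Phi$ is $n^{-1/2}m^{1/q}k^{1/\sigma}(rl)^{1/2}$-type, i.e.\ $n\ge m^{2/q}k^{2/\sigma}r^{1-2/q}l^{1-2/\sigma}$, one has automatically $n\ge rl\,(m/r)^{2/q}(k/l)^{2/\sigma}\ge rl$, so the block dimension never exceeds $n$ and your lower bound is vacuous. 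The simplest instance: for $\Psi=\Psi_0$ with $p_\alpha<2$, $\theta_\alpha<2$ and $m^{2/q}k^{2/\sigma}<n\le mk/2$, the claim is $d_n(M,\,l^{m,k}_{q,\sigma})\gtrsim\nu_\alpha n^{-1/2}m^{1/q}k^{1/\sigma}$, and the paper proves it with the set $\nu_\alpha V^{m,k}_{1,1}$ (a full cross-polytope spread over all $mk$ coordinates); the corresponding sub-block is $1\times 1$ and carries no width at all. Enlarging the block to dimension $2n$ does not rescue the argument: either it violates the containments dictated by the other balls, or (when it does not) it degrades the width by factors $(m/\mu)^{1/q}(k/\kappa)^{1/\sigma}$ and no longer matches $\Psi_{j_0}$.

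The missing idea is precisely the paper's use of the orbit sets $V^{m,k}_{r,l}$ of \eqref{vrl_km} -- convex hulls of all signed row/column permutations of the $r\times l$ block indicator -- together with Theorem \ref{v_dn}. These sets have the \emph{same} containment constants as your block picture, since $\sup_{x\in V^{m,k}_{r,l}}\|x\|_{l^{m,k}_{p_\gamma,\theta_\gamma}}=r^{1/p_\gamma}l^{1/\theta_\gamma}$, but because their mass is spread over all $mk$ coordinates their width obeys \eqref{dn_vmk1}, which stays of order $n^{-1/2}m^{1/q}k^{1/\sigma}r^{1/2}l^{1/2}$ far beyond $n=rl$. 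All of the paper's case analysis (which $\Psi_j$ is active, which constraints are slack -- the part you correctly identify as the bookkeeping core) is carried out for inclusions $W\subset 2M$ or $W\subset 4M$ with $W$ a scaled $V^{m,k}_{r,l}$, not a block ball. Two further omissions, fixable but real: your argument fixes an index $j_0$ and a configuration ``realizing'' the minimum, which requires $A$ finite and the points $(1/p_\alpha,\,1/\theta_\alpha)$ in general position; the paper needs a separate perturbation step (Lemmas \ref{lem_a1}, \ref{lem_a2}) for degenerate finite configurations and a covering argument (\S 5) for arbitrary $A$, where the infimum need not be attained.
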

Hence (see \eqref{d_n_bmkpt_kolm}), the estimating of the Kolmogorov widths of the intersection of a family of the balls can be reduced to estimating the infimum of a family of values $d_n(\nu B^{m,k}_{p,\theta}, \, l^{m,k}_{q,\sigma})$.

The particular case for an intersection of two balls and $1\le p_\alpha\le q$, $1\le \theta_\alpha\le \sigma$, was considered in \cite{vas_mix2}.

The paper is organized as follows. In \S 2, the well-known results are formulated, auxiliary assertions are proved an the upper estimate for $d_n(M, \, l^{m,k}_{q,\sigma})$ is obtained. In \S 3, the lower estimate is obtained in the case of a finite set $A$, when the points $\{(1/p_\alpha, \, 1/\theta_\alpha)\}_{\alpha \in A}$ are in a general position. In \S 4, the lower estimate is proved for a finite set $A$ and arbitrary $\{(1/p_\alpha, \, 1/\theta_\alpha)\}_{\alpha \in A}$. In \S 5, the estimate is proved for an arbitrary nonempty set $A$.

\section{Auxiliary assertions and proof of the upper estimate}

Let $m$, $k$, $r$, $l\in \N$, $1\le r\le m$, $1\le l\le k$. We set $$G=\{(\tau_1, \, \tau_2, \, \varepsilon_1, \, \varepsilon_2):\; \tau_1\in S_m, \, \tau_2\in S_k, \, \varepsilon_1\in \{1, \, -1\}^m, \, \varepsilon_2\in \{1, \, -1\}^k\},$$
where $S_m$ and $S_k$ are the groups of permutations of $m$ and $k$ elements, respectively. Given $x=(x_{i,j})_{1\le i\le m, \, 1\le j\le k}\in \R^{mk}$, $\gamma = (\tau_1, \, \tau_2, \, \varepsilon_1, \, \varepsilon_2)\in G$, $\varepsilon_1=(\varepsilon_{1,i})_{1\le i\le m}$, $\varepsilon_2=(\varepsilon_{2,j})_{1\le j\le k}$, we set 
\begin{align}
\label{gamma_x_def}
\gamma(x) = (\varepsilon_{1,i}\varepsilon_{2,j}x_{\tau_1(i)\tau_2(j)})_{1\le i\le m, \, 1\le j\le k}.
\end{align}

We denote $e=(e_{i,j}^{m,k,r,l})_{1\le i\le m, \, 1\le j\le k}$, where
\begin{align}
\label{eij_kl}
e_{i,j}^{m,k,r,l} = \begin{cases} 1 & \text{if } 1\le i\le r, \; 1\le j\le l, \\ 0 & \text{otherwise},\end{cases}
\end{align}
\begin{align}
\label{vrl_km}
V_{r,l}^{m,k} = {\rm conv}\{\gamma(e):\; \gamma\in G\}.
\end{align}

The following assertion was proved in \cite{vas_besov} for $n\le a(q, \, \sigma) m^{\frac 2q}k^{\frac{2}{\sigma}}r^{1-\frac 2q}l^{1-\frac{2}{\sigma}}$, and in \cite{vas_mix2}, for $a(q, \, \sigma) m^{\frac 2q}k^{\frac{2}{\sigma}}r^{1-\frac 2q}l^{1-\frac{2}{\sigma}}\le n \le \frac{mk}{2}$ (here $a(q, \, \sigma)>0$ is nonincreasing with $q$ and $\sigma$).
\begin{trma}
\label{v_dn} {\rm (see \cite{vas_mix2, vas_besov}).}
Let $2\le q<\infty$, $2\le \sigma <\infty$, $m$, $k$, $r$, $l\in \N$, $1\le r\le m$, $1\le l \le k$, $n\in \Z_+$, $n\le \frac{mk}{2}$. Then
\begin{align}
\label{dn_vmk1} d_n(V^{m,k}_{r,l}, \, l^{m,k}_{q,\sigma}) \underset{q,\sigma}{\gtrsim} \begin{cases} r^{\frac 1q}l^{\frac{1}{\sigma}} & \text{if}\; n\le  m^{\frac 2q}k^{\frac{2}{\sigma}}r^{1-\frac 2q} l^{1-\frac{2}{\sigma}}, \\ n^{-\frac 12}m^{\frac 1q}k^{\frac{1}{\sigma}} r^{\frac 12} l^{\frac 12} & \text{if}\; n\ge  m^{\frac 2q}k^{\frac{2}{\sigma}}r^{1-\frac 2q} l^{1-\frac{2}{\sigma}}.\end{cases}
\end{align}
\end{trma}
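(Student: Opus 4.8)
The plan is to reduce the whole statement to a single estimate and then prove that estimate by a Gluskin-type argument. First note that the two right-hand sides of \eqref{dn_vmk1} agree at $n_0:=m^{2/q}k^{2/\sigma}r^{1-2/q}l^{1-2/\sigma}$, where both equal $r^{1/q}l^{1/\sigma}$. The key structural observation is that for $r\le s\le m$ and $l\le t\le k$ the body $V^{s,t}_{r,l}$ embeds (on an $s\times t$ coordinate block) into $V^{m,k}_{r,l}$, and a coordinate projection does not increase the mixed norm; hence
\[
d_n\bigl(V^{m,k}_{r,l},\,l^{m,k}_{q,\sigma}\bigr)\ge d_n\bigl(V^{s,t}_{r,l},\,l^{s,t}_{q,\sigma}\bigr).
\]
Consequently the theorem follows once one proves the single ``upper-regime'' bound
\[
d_n\bigl(V^{s,t}_{r,l},\,l^{s,t}_{q,\sigma}\bigr)\gtrsim n^{-1/2}s^{1/q}t^{1/\sigma}(rl)^{1/2},\qquad s^{2/q}t^{2/\sigma}r^{1-2/q}l^{1-2/\sigma}\le n\le \tfrac{st}{2},
\]
for all admissible parameters: taking $(s,t)=(m,k)$ gives the second case of \eqref{dn_vmk1}, while choosing $(s,t)$ with $s^{2/q}t^{2/\sigma}r^{1-2/q}l^{1-2/\sigma}=n$ makes the right-hand side equal $r^{1/q}l^{1/\sigma}$ (a one-line computation), which gives the first case for $rl\lesssim n\le n_0$.

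The remaining small range $n\lesssim rl$ (and the whole case $q=\sigma=2$, for which the upper-regime range is empty) is handled by an elementary averaging bound. Both $V^{m,k}_{r,l}$ and $l^{m,k}_{q,\sigma}$ are invariant under the group $G$, which acts by isometries; viewing $\gamma(e)$ as a vector in $\R^{mk}$, the signs annihilate the off-diagonal part and the permutations symmetrise the diagonal, so the $G$-average of $\gamma(e)\gamma(e)^{\top}$ is $\tfrac{rl}{mk}\,\mathrm{Id}$. For any $L$ with $\dim L=n$ this yields $\frac{1}{|G|}\sum_\gamma\|P_L\gamma(e)\|_2^2=\tfrac{rl}{mk}n$, whence some $\gamma$ has $\mathrm{dist}_2(\gamma(e),L)^2\ge rl(1-\tfrac{n}{mk})$ and $d_n(V^{m,k}_{r,l},l_2^{mk})\gtrsim(rl)^{1/2}$ for $n\le mk/2$. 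Restricting the target to an $r\times l$ block, on which $\|\cdot\|_{q,\sigma}\ge r^{1/q-1/2}l^{1/\sigma-1/2}\|\cdot\|_2$ and $V^{m,k}_{r,l}$ contains $V^{r,l}_{r,l}$, upgrades this to $r^{1/q}l^{1/\sigma}$ for $n\le rl/2$.

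For the upper-regime bound I would pass to the dual (Gelfand) formulation
\[
d_n\bigl(V^{s,t}_{r,l},\,l^{s,t}_{q,\sigma}\bigr)=d^n\bigl(B^{s,t}_{q',\sigma'},\,\|\cdot\|_{(V^{s,t}_{r,l})^{\circ}}\bigr),
\]
where $\|x\|_{(V^{s,t}_{r,l})^{\circ}}=\max_\gamma\langle\gamma(e),x\rangle$ is the maximal signed-rectangle functional, and reduce the lower bound to the assertion that every subspace of codimension $\le n$ (hence of dimension $\ge st/2$) contains a vector $x$ with $\|x\|_{q',\sigma'}\le 1$ whose rectangle functional is $\gtrsim n^{-1/2}s^{1/q}t^{1/\sigma}(rl)^{1/2}$. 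This is a Kashin--Gluskin-type statement, and its engine is a volume-ratio / random-section estimate for $V^{s,t}_{r,l}$ measured against $B^{s,t}_{q,\sigma}$ on the full space $\R^{st}$; it is here that the decay $n^{-1/2}$ and the spreading factor $s^{1/q}t^{1/\sigma}$ are produced together. Because the two indices $q,\sigma$ act on rows and columns separately, I would carry out the section estimate in the product geometry, iterating first over the $t$ columns and then over the $s$ rows.

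The main obstacle is precisely this global Gelfand-width estimate: obtaining the sharp factor $s^{1/q}t^{1/\sigma}$ uniformly — rather than the weaker exponent returned by the crude comparison $\|\cdot\|_{q,\sigma}\ge s^{1/q-1/2}t^{1/\sigma-1/2}\|\cdot\|_2$ or by any single coordinate restriction. Two features make it delicate. First, the vertices $\gamma(e)$ carry only a \emph{rank-one} sign pattern $\varepsilon_{1,i}\varepsilon_{2,j}$, so $\|\cdot\|_{(V^{s,t}_{r,l})^{\circ}}$ is a cut-norm-type quantity and not a plain sum of $|x_{i,j}|$, and the constructed sections must respect this structure. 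Second, the two indices are coupled, so the volume/section computation must be done in the product geometry and then matched exactly to the transition at $n_0$, where the averaging over all $st$ coordinates saturates — this matching is what makes it natural (as in the cited proofs) to organise the argument by the size of $n$.
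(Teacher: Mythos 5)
Your preparatory reductions are correct, and one of them is genuinely nice: the block-restriction inequality $d_n(V^{m,k}_{r,l},\,l^{m,k}_{q,\sigma})\ge d_n(V^{s,t}_{r,l},\,l^{s,t}_{q,\sigma})$ is valid, the matching of the two cases of \eqref{dn_vmk1} at the threshold $n_0=m^{2/q}k^{2/\sigma}r^{1-2/q}l^{1-2/\sigma}$ is a correct computation, and your averaging argument (the $G$-average of $\gamma(e)\gamma(e)^{\top}$ is $\tfrac{rl}{mk}\,\mathrm{Id}$, whence $d_n(V^{m,k}_{r,l},\,l^{m,k}_{2,2})\gtrsim (rl)^{1/2}$ for $n\le mk/2$) is a complete, elementary proof of the Euclidean case. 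That last point is worth noting: the paper obtains exactly this ingredient by ``repeating the arguments of \cite{vas_besov} for $q=\sigma=2$'', so on this piece your trace argument is simpler than the cited route.

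The proposal nevertheless has a genuine gap, and it sits at the core: the ``upper-regime'' bound $d_n(V^{s,t}_{r,l},\,l^{s,t}_{q,\sigma})\gtrsim n^{-1/2}s^{1/q}t^{1/\sigma}(rl)^{1/2}$ for $s^{2/q}t^{2/\sigma}r^{1-2/q}l^{1-2/\sigma}\le n\le st/2$, to which you reduce the whole theorem, is never proved. Passing to the Gelfand-width formulation is legitimate, but after that you only name the engine (``a volume-ratio / random-section estimate \dots carried out in the product geometry'') and you yourself flag the sharp factor $s^{1/q}t^{1/\sigma}$ as ``the main obstacle''. That obstacle \emph{is} the theorem: all the steps you actually carry out (block restriction, the averaging bound, the threshold matching) are bookkeeping around this estimate, whereas the estimate itself is the hard Gluskin-type content that occupies the cited proofs in \cite{vas_besov} and \cite{vas_mix2}. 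As written, your argument establishes \eqref{dn_vmk1} only for $q=\sigma=2$ and for the small range $n\lesssim rl$; for every case with $q>2$ or $\sigma>2$ and $n$ above the order of $rl$ nothing is proved. Note also how the paper handles the same difficulty: it does not reprove the core either, but cites \cite{vas_besov} for $n\le a(q,\sigma)\,m^{2/q}k^{2/\sigma}r^{1-2/q}l^{1-2/\sigma}$ and bridges the remaining range by interpolating the exponents (choosing $\tilde q\in[2,q]$, $\tilde\sigma\in[2,\sigma]$ so that $n$ equals the corresponding threshold, then comparing the norms $l_{q,\sigma}$ and $l_{\tilde q,\tilde\sigma}$) together with the Euclidean case. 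Your block restriction plays the same structural role as that interpolation, but a reduction without a proof (or a citation) of the main-regime estimate leaves the statement unestablished.
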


The sketch of proof for $a(q, \, \sigma) m^{\frac 2q}k^{\frac{2}{\sigma}}r^{1-\frac 2q}l^{1-\frac{2}{\sigma}}\le n \le \frac{mk}{2}$ is as follows. It suffices to obtain the lower estimate. If $a(q, \, \sigma) m^{\frac 2q}k^{\frac{2}{\sigma}}r^{1-\frac 2q}l^{1-\frac{2}{\sigma}}\le n \le a(q, \, \sigma)mk$, then we choose the numbers $\tilde q\in [2, \, q]$ and $\tilde \sigma \in [2, \, \sigma]$ such that $n=a(q, \, \sigma) m^{\frac{2}{\tilde q}}k^{\frac{2}{\tilde \sigma}}r^{1-\frac{2}{\tilde q}}l^{1-\frac{2}{\tilde \sigma}}$, and apply the estimate from \cite{vas_besov} for $d_n(V^{m,k}_{r,l}, \, l^{m,k}_{\tilde q,\tilde\sigma})$. For $a(q, \, \sigma)mk \le n \le \frac{mk}{2}$, it suffices to prove that $d_n(V^{m,k}_{r,l}, \, l^{m,k}_{2,2})\gtrsim r^{1/2}l^{1/2}$. To this end, we repeat the arguments from \cite[pp. 14--17]{vas_besov} for the particular case $q=\sigma=2$.

The following result was obtained in \cite{vas_besov} for $n\le a(q, \, \sigma)mk$ and in \cite{vas_mix2} for $a(q, \, \sigma)mk\le n \le mk/2$. Notice that in \cite{vas_besov} the constants in the order equality depended on $p$ and $\theta$, but the proof shows that they are independent of these parameters.
\begin{trma}
\label{1mixed}
{\rm (see \cite{vas_mix2, vas_besov}).} Let $2\le q<\infty$, $2\le \sigma < \infty$, $1\le p\le q$, $1\le \theta \le \sigma$, $n\le \frac{mk}{2}$, the value $\Phi(p, \, \theta)$ is defined by \eqref{phi4}--\eqref{phi6}. Then 
$$d_n(B^{m,k}_{p,\theta}, \, l^{m,k}_{q,\sigma}) \underset{q,\sigma}{\asymp} \Phi(p, \, \theta).$$
\end{trma}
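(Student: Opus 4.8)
The plan is to prove the two matching bounds $d_n(B^{m,k}_{p,\theta},l^{m,k}_{q,\sigma})\underset{q,\sigma}{\gtrsim}\Phi(p,\theta)$ and $d_n(B^{m,k}_{p,\theta},l^{m,k}_{q,\sigma})\underset{q,\sigma}{\lesssim}\Phi(p,\theta)$ separately. For the lower bound I would use the $G$-invariant test bodies $V^{m,k}_{r,l}$ of \eqref{vrl_km} together with Theorem~\ref{v_dn}; for the upper bound I would reduce everything to a single Euclidean (Kashin-type) estimate and then interpolate. I expect the sharp Euclidean estimate and the interpolation step to be the main work.

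\emph{Lower bound.} The generating matrix $e=e^{m,k,r,l}$ of \eqref{eij_kl} has $l$ nonzero columns, each of $l^m_p$-norm $r^{1/p}$, so $\|e\|_{l^{m,k}_{p,\theta}}=r^{1/p}l^{1/\theta}$; every $\gamma\in G$ preserves this norm, hence $r^{-1/p}l^{-1/\theta}V^{m,k}_{r,l}\subset B^{m,k}_{p,\theta}$. Monotonicity of the widths under inclusion gives
$$
d_n(B^{m,k}_{p,\theta},l^{m,k}_{q,\sigma})\ge r^{-1/p}l^{-1/\theta}\,d_n(V^{m,k}_{r,l},l^{m,k}_{q,\sigma})
$$
for every $1\le r\le m$, $1\le l\le k$. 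Substituting the two-regime bound \eqref{dn_vmk1} and optimizing over the feasible pairs $(r,l)$ reproduces $\Phi(p,\theta)$: the value realized depends on the position of $(r,l)$ relative to the saturation constraints $r\le m$, $l\le k$ and to the threshold $n=m^{2/q}k^{2/\sigma}r^{1-2/q}l^{1-2/\sigma}$ separating the two branches of \eqref{dn_vmk1}. Corner choices give the term $1$ and the term $n^{-1/2}m^{1/q}k^{1/\sigma}$; saturating one index at $m$ or $k$ produces the prefactors $m^{1/q-1/p}$ or $k^{1/\sigma-1/\theta}$; and placing $(r,l)$ on the threshold is exactly what caps the resulting power of $n$ at the interpolation exponents $\omega_{p,q}$, $\omega_{\theta,\sigma}$ of \eqref{om_pq}. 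Carrying out this (elementary but case-heavy) maximization term by term yields $d_n\gtrsim\Phi(p,\theta)$ in each of \eqref{phi4}--\eqref{phi6}.

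\emph{Upper bound.} The term $1$ is trivial: since $q\ge p$ and $\sigma\ge\theta$, monotonicity of $l_s$-norms in $s$ gives $\|x\|_{l^{m,k}_{q,\sigma}}\le\|x\|_{l^{m,k}_{p,\theta}}\le1$ on $B^{m,k}_{p,\theta}$, so $d_n\le d_0\le1$. The heart of the upper estimate is the Euclidean case: one shows the Kashin-type bound
$$
d_n(B^{mk}_2,l^{m,k}_{q,\sigma})\underset{q,\sigma}{\lesssim}\min\{1,\,n^{-1/2}m^{1/q}k^{1/\sigma}\}
$$
by a Gaussian/volumetric construction of random $n$-dimensional subspaces adapted to the mixed norm $l^{m,k}_{q,\sigma}=l^k_\sigma(l^m_q)$ (the factor $m^{1/q}k^{1/\sigma}$ being the mixed-norm analogue of the single $m^{1/q}$ in Kashin's theorem, available because $q,\sigma\ge2$). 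For $1\le p\le2$, $1\le\theta\le2$ this already gives \eqref{phi6}, because then $B^{m,k}_{p,\theta}\subset B^{mk}_2$ and widths are monotone. For the ranges \eqref{phi4} and \eqref{phi5} I would obtain the nontrivial terms by interpolating the Euclidean construction against the trivial embedding along the scale of balls $B^{m,k}_{p,\theta}$: complex interpolation of the balls, combined with a multiplicative interpolation inequality for widths, produces a bound whose $n$-exponent is precisely $\omega_{p,q}$ (interpolating in the inner index) or $\omega_{\theta,\sigma}$ (interpolating in the outer index), while the far endpoint supplies the prefactors $m^{1/q-1/p}$ and $k^{1/\sigma-1/\theta}$; taking the best of the available constructions gives the minimum in \eqref{phi4}, \eqref{phi5}.

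\emph{Main difficulty.} The hard part will be twofold. First, the Euclidean core estimate is itself a genuine mixed-norm Kashin theorem: getting the sharp mixed factor $m^{1/q}k^{1/\sigma}$ (rather than the crude $m^{1/q}k^{1/2}$ coming from a column-by-column construction) requires exploiting the type-$2$ geometry in both indices simultaneously. Second, the interpolation step for \eqref{phi4}, \eqref{phi5} must deliver exactly the capped exponents $\omega_{p,q}$, $\omega_{\theta,\sigma}$ and the correct prefactors, and must do so with constants depending only on $q,\sigma$ --- this uniformity in $(p,\theta)$ is what allows $\Phi(p,\theta)$ to serve as the elementary building block in the intersection theorem.
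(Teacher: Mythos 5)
Your proposal is correct and takes essentially the same approach as the paper, which does not reprove Theorem~\ref{1mixed} but imports it from \cite{vas_besov,vas_mix2}: the paper's own sketch of the lower estimate is precisely your argument, namely the inclusions $r^{-1/p}l^{-1/\theta}V^{m,k}_{r,l}\subset B^{m,k}_{p,\theta}$ fed into Theorem~\ref{v_dn}, with corner choices $r\in\{1,m\}$, $l\in\{1,k\}$ for the top range of $n$ and threshold choices of $(r,l)$ in the intermediate regimes. Your upper-bound plan --- the Gaussian/dual Kashin-type bound $d_n(B^{m,k}_{2,2},\,l^{m,k}_{q,\sigma})\underset{q,\sigma}{\lesssim}\min\{1,\,n^{-1/2}m^{1/q}k^{1/\sigma}\}$ combined with multiplicative interpolation whose far endpoint lies beyond $(1/q,\,1/\sigma)$ and supplies the prefactors --- is the method of the cited source, and your bookkeeping is internally consistent: for the term $m^{1/q-1/p}(n^{-1/2}m^{1/2}k^{1/\sigma})^{\omega_{\theta,\sigma}}$ the far endpoint is $(p_1,\,\sigma)$ with $1/p_1=\bigl(1/p-\omega_{\theta,\sigma}/2\bigr)/\bigl(1-\omega_{\theta,\sigma}\bigr)$, and its feasibility $p_1\ge q$ is exactly the case condition $\omega_{p,q}\le\omega_{\theta,\sigma}$ of \eqref{phi4}.
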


For $a(q, \, \sigma)mk\le n \le mk/2$, the idea of the proof of the lower estimate is as follows: we apply (\ref{dn_vmk1}) and the inclusions $r^{-1/p}l^{-1/\theta}V^{m,k}_{r,l} \subset B^{m,k}_{p,\theta}$ for $r=1$ or $r=m$, $l=1$ or $l=k$. The upper estimate holds for all $n \le mk$ (see \cite{vas_besov}).

It turns out that the conditions $p\le q$, $\theta\le \sigma$ can be eliminated. Moreover, for $p\ge q$, $\theta \ge \sigma$, the conditions $2\le q<\infty$, $2\le \sigma <\infty$ can be also eliminated.

\begin{Sta}
\label{pgq} Let $1\le q\le p\le \infty$, $1\le \sigma \le \theta \le \infty$, $n\le \frac{mk}{2}$. Then
$$
d_n(B^{m,k}_{p,\theta}, \, l^{m,k}_{q,\sigma}) \asymp m^{\frac 1q -\frac 1p} k^{\frac{1}{\sigma} -\frac{1}{\theta}}.
$$
\end{Sta}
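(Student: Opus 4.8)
The plan is to prove the two bounds separately and to track constants carefully, since the statement asserts an order equality with \emph{absolute} constants (independent of $q,\sigma$).

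\emph{Upper bound.} I would take the trivial subspace $L=\{0\}$, so that
$$d_n(B^{m,k}_{p,\theta}, \, l^{m,k}_{q,\sigma})\le \sup_{x\in B^{m,k}_{p,\theta}}\|x\|_{l^{m,k}_{q,\sigma}}=\|\mathrm{id}:\,l^{m,k}_{p,\theta}\to l^{m,k}_{q,\sigma}\|.$$
Since $q\le p$ and $\sigma\le\theta$, the power-mean inequality gives $\|v\|_{l_q^m}\le m^{1/q-1/p}\|v\|_{l_p^m}$ and $\|w\|_{l_\sigma^k}\le k^{1/\sigma-1/\theta}\|w\|_{l_\theta^k}$; applying the first to each inner norm and the second to the resulting outer vector yields $\|x\|_{l^{m,k}_{q,\sigma}}\le m^{1/q-1/p}k^{1/\sigma-1/\theta}\|x\|_{l^{m,k}_{p,\theta}}$ (with equality for the constant vector). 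Thus the upper bound holds with constant $1$.

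\emph{Lower bound.} The first step is the inclusion $m^{-1/p}k^{-1/\theta}B^{m,k}_{\infty,\infty}\subset B^{m,k}_{p,\theta}$, which is immediate from $\|x\|_{l^{m,k}_{p,\theta}}\le m^{1/p}k^{1/\theta}\|x\|_{l^{m,k}_{\infty,\infty}}$. By homogeneity and monotonicity of the width, this reduces the claim to
$$d_n(B^{m,k}_{\infty,\infty}, \, l^{m,k}_{q,\sigma})\gtrsim m^{1/q}k^{1/\sigma},\qquad n\le \tfrac{mk}{2}.$$
To prove this I would use the duality description of the cube width. Writing $N=mk$ and letting $q',\sigma'$ be the conjugate exponents ($\frac1q+\frac1{q'}=\frac1\sigma+\frac1{\sigma'}=1$), the distance formula $\dist_{l^{m,k}_{q,\sigma}}(x,L)=\sup\{\langle x,h\rangle:\ h\in L^\perp,\ \|h\|_{l^{m,k}_{q',\sigma'}}\le 1\}$ together with $\sup_{\|x\|_{l^{m,k}_{\infty,\infty}}\le1}\langle x,h\rangle=\|h\|_{l_1^N}$ gives
$$d_n(B^{m,k}_{\infty,\infty}, \, l^{m,k}_{q,\sigma})=\inf_{\dim M=N-n}\ \sup_{h\in M,\ \|h\|_{l^{m,k}_{q',\sigma'}}\le1}\|h\|_{l_1^N}.$$

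It then remains to bound the inner supremum from below for every subspace $M$ with $\dim M=N-n\ge N/2$, and this is where the main work lies. I would show that the polytope $M\cap B^{m,k}_{\infty,\infty}$ has an extreme point $v$ with at least $\dim M$ coordinates equal to $\pm1$: a point $v\in M\cap B^{m,k}_{\infty,\infty}$ is extreme exactly when no nonzero $w\in M$ vanishes on all coordinates where $|v_{i,j}|=1$, and this forces the number of such coordinates to be at least $\dim M$. Such a $v$ satisfies $\|v\|_{l_1^N}\ge \dim M\ge N/2$ and $\|v\|_{l^{m,k}_{\infty,\infty}}\le1$, hence $\|v\|_{l^{m,k}_{q',\sigma'}}\le m^{1/q'}k^{1/\sigma'}$; normalizing $h=v/(m^{1/q'}k^{1/\sigma'})$ produces an admissible competitor with $\|h\|_{l_1^N}\ge \tfrac12 m^{1-1/q'}k^{1-1/\sigma'}=\tfrac12 m^{1/q}k^{1/\sigma}$, using $1-1/q'=1/q$ and $1-1/\sigma'=1/\sigma$. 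Feeding this back through the cube inclusion gives $d_n(B^{m,k}_{p,\theta}, \, l^{m,k}_{q,\sigma})\ge \tfrac12 m^{1/q-1/p}k^{1/\sigma-1/\theta}$, matching the upper bound up to an absolute constant.

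The main obstacle is precisely this last step: locating, inside an arbitrary subspace of dimension $\ge N/2$, a vector that is simultaneously bounded in $\|\cdot\|_{l^{m,k}_{\infty,\infty}}$ and as spread out in $\|\cdot\|_{l_1^N}$ as the all-ones vector. The extreme-point count makes this elementary and, crucially, keeps every constant absolute, so that no estimate degenerates as $q$ or $\sigma$ tends to $1$. This is exactly why the argument bypasses Theorems \ref{v_dn} and \ref{1mixed} (which require $q,\sigma\ge2$) and allows those restrictions, as well as $p\le q$, $\theta\le\sigma$, to be dropped in this regime.
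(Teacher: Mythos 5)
Your proposal is correct, and for the lower bound it takes a genuinely different route from the paper. The paper's upper bound is the same as yours (the norm inequality, phrased as the inclusion $B^{m,k}_{p,\theta}\subset m^{1/q-1/p}k^{1/\sigma-1/\theta}B^{m,k}_{q,\sigma}$). For the lower bound, the paper also starts from $B^{m,k}_{p,\theta}\supset m^{-1/p}k^{-1/\theta}B^{m,k}_{\infty,\infty}$, but then it does \emph{not} work in the mixed norm directly: it compares norms once more, $\|x\|_{l^{m,k}_{q,\sigma}}\ge k^{1/\sigma-1/q}\|x\|_{l^{m,k}_{q,q}}$ for $q\le\sigma$ (symmetrically for $q\ge\sigma$), which reduces everything to the pure norm $l^{m,k}_{q,q}=l_q^{mk}$, and then invokes the known exact value $d_n(B^N_\infty,\,l_s^N)=(N-n)^{1/s}$ of Pietsch and Stesin. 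You instead prove the mixed-norm cube estimate $d_n(B^{m,k}_{\infty,\infty},\,l^{m,k}_{q,\sigma})\ge\tfrac12 m^{1/q}k^{1/\sigma}$ from scratch: duality to pass to $\sup\{\|h\|_{l_1^N}:h\in L^\perp,\ \|h\|_{l^{m,k}_{q',\sigma'}}\le1\}$, then an extreme point of $L^\perp\cap B^{m,k}_{\infty,\infty}$ with at least $\dim L^\perp\ge N/2$ coordinates of modulus one. This is in essence Stesin's original argument transplanted to the mixed norm, so you have unfolded the proof of the result the paper cites rather than quoting it. Both arguments are sound and give absolute constants; the paper's is shorter modulo the citation, while yours is self-contained, avoids the case split $q\le\sigma$ versus $q\ge\sigma$, and makes the constant ($\tfrac12$) explicit. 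Your extreme-point characterization and the injectivity-of-restriction count are both stated correctly, so there is no gap.
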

\begin{proof}
The upper estimate follows from the inclusion $B^{m,k}_{p,\theta} \subset m^{\frac 1q -\frac 1p} k^{\frac{1}{\sigma} -\frac{1}{\theta}}B^{m,k}_{q,\sigma}$. 

In order to obtain the lower estimate, we use the inclusion $B^{m,k}_{p,\theta} \supset m^{-\frac 1p} k^{-\frac{1}{\theta}} B^{m,k}_{\infty,\infty}$ and the well-known equality $d_n(B^N_\infty, \, l_s^N) = (N-n)^{1/s}$ (see \cite{pietsch1, stesin}). If $q\le \sigma$, then
$$
d_n(B^{m,k}_{\infty,\infty}, \, l^{m,k}_{q,\sigma}) \ge k^{\frac{1}{\sigma}-\frac 1q}d_n(B^{m,k}_{\infty,\infty}, \, l^{m,k}_{q,q}) \asymp k^{\frac{1}{\sigma}-\frac 1q} (mk)^{\frac 1q} = m^{\frac 1q} k^{\frac{1}{\sigma}}.
$$
The case $q\ge \sigma$ is similar.
\end{proof}

\begin{Sta}
\label{est_dn} Let $2\le q<\infty$, $2\le \sigma < \infty$, $n\le \frac{mk}{2}$. Suppose that one of the following conditions holds: 1) $p\ge q$, $\theta \le \sigma$, 2) $p\le q$, $\theta \ge \sigma$. The value $\Phi(p, \, \theta)$ is defined by \eqref{phi2}, \eqref{phi3}. Then 
$$d_n(B^{m,k}_{p,\theta}, \, l^{m,k}_{q,\sigma}) \underset{q,\sigma}{\asymp} \Phi(p, \, \theta).$$
\end{Sta}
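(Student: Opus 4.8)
The plan is to reduce the two cases to one by transposition, and then treat the upper and lower estimates separately. The map $(x_{i,j})\mapsto(x_{j,i})$ is an isometry of $l^{m,k}_{p,\theta}$ onto $l^{k,m}_{\theta,p}$ carrying $B^{m,k}_{p,\theta}$ to $B^{k,m}_{\theta,p}$ and $l^{m,k}_{q,\sigma}$ to $l^{k,m}_{\sigma,q}$, so $d_n(B^{m,k}_{p,\theta},\,l^{m,k}_{q,\sigma})=d_n(B^{k,m}_{\theta,p},\,l^{k,m}_{\sigma,q})$; under the substitution $m\leftrightarrow k$, $p\leftrightarrow\theta$, $q\leftrightarrow\sigma$ formula \eqref{phi3} turns into \eqref{phi2}. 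Hence case 2 follows from case 1, and I treat only $p\ge q$, $\theta\le\sigma$, where $\Phi(p,\theta)$ is given by \eqref{phi2}.

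For the upper estimate I would use the norm inequality $\|y\|_{l^m_q}\le m^{1/q-1/p}\|y\|_{l^m_p}$ (valid since $p\ge q$), applied in each column, which gives $B^{m,k}_{p,\theta}\subset m^{1/q-1/p}B^{m,k}_{q,\theta}$ and therefore $d_n(B^{m,k}_{p,\theta},\,l^{m,k}_{q,\sigma})\le m^{1/q-1/p}d_n(B^{m,k}_{q,\theta},\,l^{m,k}_{q,\sigma})$. Since $q\le q$ and $\theta\le\sigma$, Theorem~\ref{1mixed} applies to the right-hand side, so it remains to verify $m^{1/q-1/p}\Phi(q,\theta)=\Phi(p,\theta)$. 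For $q>2$ one has $\omega_{q,q}=0$, so \eqref{phi4} collapses to $\Phi(q,\theta)=\min\{1,(n^{-1/2}m^{1/2}k^{1/\sigma})^{\omega_{\theta,\sigma}}\}$; for $q=2$ the same value comes out of \eqref{phi5}--\eqref{phi6} once one checks, using $n\le mk/2$ (whence $n^{-1/2}m^{1/2}k^{1/2}\ge\sqrt2$) and the identity $(1/2-1/\sigma)(1-\omega_{\theta,\sigma})=1/2-1/\theta$, that the extra term in \eqref{phi5} never lowers the minimum. Multiplying by $m^{1/q-1/p}$ then reproduces \eqref{phi2}.

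For the lower estimate I would exploit the sets $V^{m,k}_{r,l}$ of \eqref{vrl_km}. Each vertex $\gamma(e)$ has $\|\cdot\|_{l^{m,k}_{p,\theta}}$-norm $r^{1/p}l^{1/\theta}$ (the mixed norm being invariant under the row/column permutations and sign changes in $G$), so by convexity $r^{-1/p}l^{-1/\theta}V^{m,k}_{r,l}\subset B^{m,k}_{p,\theta}$, giving $d_n(B^{m,k}_{p,\theta},\,l^{m,k}_{q,\sigma})\ge r^{-1/p}l^{-1/\theta}d_n(V^{m,k}_{r,l},\,l^{m,k}_{q,\sigma})$. Since $p\ge q\ge2$, both exponents $1/q-1/p$ and $1/2-1/p$ are nonnegative, so I would always take $r=m$ and optimize only over $l$. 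Writing $u=n^{-1/2}m^{1/2}k^{1/\sigma}$, Theorem~\ref{v_dn} then yields, after scaling, the bound $m^{1/q-1/p}l^{1/\sigma-1/\theta}$ for $n\le mk^{2/\sigma}l^{1-2/\sigma}$ and $m^{1/q-1/p}u\,l^{1/2-1/\theta}$ otherwise. I would choose $l$ as follows: if $u\ge1$ (equivalently $n\le mk^{2/\sigma}$) take $l=1$ in the first regime, recovering $m^{1/q-1/p}$; if $u<1$ and $\theta\le2$ (so $\omega_{\theta,\sigma}=1$) take $l=1$ in the second regime, recovering $m^{1/q-1/p}u$; and if $u<1$ and $\theta>2$ take $l\asymp l^*$ with $l^*=u^{-2\sigma/(\sigma-2)}$, the value equalizing the two regimes, for which a direct computation using $\omega_{\theta,\sigma}=\frac{2(\sigma-\theta)}{\theta(\sigma-2)}$ gives $m^{1/q-1/p}(l^*)^{1/\sigma-1/\theta}=m^{1/q-1/p}u^{\omega_{\theta,\sigma}}$. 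One checks $1\le l^*\le k$ from $n\le mk/2$, so this $l$ is admissible; in each branch the bound matches the corresponding term of \eqref{phi2}.

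The main obstacle is the lower-bound optimization in the last paragraph: obtaining the exponent $\omega_{\theta,\sigma}$ exactly requires the correct block width $l^*$, the verification that it remains in the admissible range $[1,k]$ for all $n\le mk/2$, and control of the error from rounding $l^*$ to an integer, which affects only the constants since the exponents $1/\sigma-1/\theta$ and $1/2-1/\theta$ are bounded in terms of $q,\sigma$. The bookkeeping in the upper estimate—showing that the three-term expression \eqref{phi5} collapses to the two-term \eqref{phi2} after multiplication by $m^{1/q-1/p}$—is routine but must be carried out case by case in $q=2$ versus $q>2$ and $\theta\le2$ versus $\theta>2$.
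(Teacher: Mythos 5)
Your proposal is correct and follows essentially the same route as the paper: the upper bound via the inclusion $B^{m,k}_{p,\theta}\subset m^{1/q-1/p}B^{m,k}_{q,\theta}$ combined with Theorem \ref{1mixed}, and the lower bound via the test sets $m^{-1/p}l^{-1/\theta}V^{m,k}_{m,l}$ with $l=1$ or $l\asymp\left(n^{1/2}m^{-1/2}k^{-1/\sigma}\right)^{\frac{1}{1/2-1/\sigma}}$, which is exactly the paper's choice of the equalizing block width. The only cosmetic difference is that you derive case 2 from case 1 by an explicit transposition isometry, where the paper simply notes that the two cases are symmetric.
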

\begin{proof}
Consider case 1 (case 2 is similar). The upper estimate follows from the inclusion $B^{m,k}_{p,\theta} \subset m^{\frac 1q -\frac 1p}B^{m,k}_{q,\theta}$, Theorem \ref{1mixed} and from (\ref{om_pq}), (\ref{phi2}), (\ref{phi4}).

Let us prove the lower estimate. For $n \le mk^{2/\sigma}$, we check the inequality $$d_n(B^{m,k}_{p,\theta}, \, l^{m,k}_{q,\sigma}) \underset{q,\sigma}{\gtrsim} m^{\frac 1q-\frac 1p}.$$ To this end, we use the inclusion $m^{-\frac 1p}V^{m,k}_{m,1} \subset B^{m,k}_{p,\theta}$ (see (\ref{gamma_x_def})--(\ref{vrl_km})), Theorem \ref{v_dn} and the inequality $n \le m^{\frac 2q}k^{\frac{2}{\sigma}} m^{1-\frac 2q}$.

Let $n \ge mk^{2/\sigma}$. Then $\sigma>2$. If $\theta \le 2$, then (see \eqref{om_pq}, \eqref{phi2}) we check the inequality $$d_n(B^{m,k}_{p,\theta}, \, l^{m,k}_{q,\sigma}) \underset{q,\sigma}{\gtrsim} m^{\frac 1q-\frac 1p}n^{-\frac 12}m^{\frac 12}k^{\frac{1}{\sigma}}.$$ As in the previous case, we use the inclusion $m^{-\frac 1p}V^{m,k}_{m,1} \subset B^{m,k}_{p,\theta}$ and Theorem \ref{v_dn} (now, for $n \ge m^{\frac 2q}k^{\frac{2}{\sigma}} m^{1-\frac 2q}$).

Let $2< \theta \le \sigma$. We prove the inequality (see \eqref{om_pq}, \eqref{phi2})
$$d_n(B^{m,k}_{p,\theta}, \, l^{m,k}_{q,\sigma}) \underset{q,\sigma}{\gtrsim} m^{\frac 1q-\frac 1p}\left(n^{-\frac 12}m^{\frac 12}k^{\frac{1}{\sigma}}\right)^{\frac{1/\theta-1/\sigma}{1/2-1/\sigma}}.$$
We set $l = \left\lceil \left(n^{\frac 12}m^{-\frac 12}k^{-\frac{1}{\sigma}}\right)^{\frac{1}{1/2-1/\sigma}}\right\rceil$. Since $mk^{2/\sigma} \le n \le mk$, we have $1\le l \le k$. In addition, $n \le m^{\frac 2q} k^{\frac{2}{\sigma}} m^{1-\frac 2q} l^{1-\frac{2}{\sigma}}$. In remains to apply the inclusion $m^{-\frac 1p} l^{-\frac{1}{\theta}} V^{m,k}_{m,l} \subset B^{m,k}_{p,\theta}$ and Theorem \ref{v_dn}.
\end{proof}

The following assertion is proved in \cite{vas_mix2}.
\begin{Lem}
\label{lem_int_emb}
{\rm (see \cite{vas_mix2}).} Let $\nu_i>0$, $1\le p_i\le \infty$, $1\le \theta_i\le \infty$, $i=1, \, 2$, $\lambda \in [0, \, 1]$. Let the numbers $p$, $\theta\in [1, \, \infty]$ be defined by
\begin{align}
\label{emb_pt}
\frac 1p = \frac{1-\lambda}{p_1} + \frac{\lambda}{p_2}, \quad \frac{1}{\theta} = \frac{1-\lambda}{\theta_1} + \frac{\lambda}{\theta_2}.
\end{align}
Then
$$
\nu_1B_{p_1,\theta_1}^{m,k} \cap \nu_2B_{p_2,\theta_2} ^{m,k} \subset \nu_1^{1-\lambda} \nu_2^\lambda B^{m,k} _{p,\theta}.
$$
\end{Lem}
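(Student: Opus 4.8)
The plan is to deduce the ball inclusion from the corresponding multiplicative (Lyapunov-type) inequality for the mixed norm: namely, for every $x=(x_{i,j})\in\R^{mk}$,
\begin{align*}
\|x\|_{l_{p,\theta}^{m,k}} \le \|x\|_{l_{p_1,\theta_1}^{m,k}}^{1-\lambda}\, \|x\|_{l_{p_2,\theta_2}^{m,k}}^{\lambda}.
\end{align*}
Once this is established, any $x\in \nu_1 B_{p_1,\theta_1}^{m,k}\cap \nu_2 B_{p_2,\theta_2}^{m,k}$ satisfies $\|x\|_{l_{p_1,\theta_1}^{m,k}}\le \nu_1$ and $\|x\|_{l_{p_2,\theta_2}^{m,k}}\le \nu_2$, whence $\|x\|_{l_{p,\theta}^{m,k}}\le \nu_1^{1-\lambda}\nu_2^{\lambda}$, i.e.\ $x\in \nu_1^{1-\lambda}\nu_2^{\lambda}B_{p,\theta}^{m,k}$, which is exactly the claimed inclusion. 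The single tool is the scalar interpolation inequality $\|y\|_{l_s^N}\le \|y\|_{l_{s_1}^N}^{1-\lambda}\|y\|_{l_{s_2}^N}^{\lambda}$, valid whenever $\frac 1s=\frac{1-\lambda}{s_1}+\frac{\lambda}{s_2}$; this is an immediate consequence of H\"older's inequality applied to the splitting $|y_i|^s=|y_i|^{(1-\lambda)s}\,|y_i|^{\lambda s}$ with the conjugate exponents $\frac{s_1}{(1-\lambda)s}$ and $\frac{s_2}{\lambda s}$ (whose reciprocals sum to $1$ precisely by the relation defining $s$), using the usual $L^\infty$ conventions when $s_1$ or $s_2$ equals $\infty$.

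Next I would apply this scalar inequality twice, following the nested structure of the mixed norm. First, for each column $j$ I set $y^{(j)}=(x_{i,j})_{i=1}^m$ and use the scalar inequality with $(s,s_1,s_2)=(p,p_1,p_2)$ to obtain $\|y^{(j)}\|_{l_p^m}\le b_j^{1-\lambda}c_j^{\lambda}$, where $b_j=\|y^{(j)}\|_{l_{p_1}^m}$ and $c_j=\|y^{(j)}\|_{l_{p_2}^m}$. Since the $l_\theta^k$ norm is monotone in the moduli of its entries, this yields
\begin{align*}
\|x\|_{l_{p,\theta}^{m,k}} = \bigl\|(\|y^{(j)}\|_{l_p^m})_{j=1}^k\bigr\|_{l_\theta^k} \le \bigl\|(b_j^{1-\lambda}c_j^{\lambda})_{j=1}^k\bigr\|_{l_\theta^k}.
\end{align*}
I then interpolate the outer norm: applying H\"older to $\sum_j (b_j^{1-\lambda}c_j^{\lambda})^{\theta}=\sum_j b_j^{(1-\lambda)\theta}c_j^{\lambda\theta}$ with the conjugate exponents $\frac{\theta_1}{(1-\lambda)\theta}$ and $\frac{\theta_2}{\lambda\theta}$ (legitimate because $\frac 1\theta=\frac{1-\lambda}{\theta_1}+\frac{\lambda}{\theta_2}$) gives $\|(b_j^{1-\lambda}c_j^{\lambda})_j\|_{l_\theta^k}\le \|(b_j)_j\|_{l_{\theta_1}^k}^{1-\lambda}\|(c_j)_j\|_{l_{\theta_2}^k}^{\lambda}$. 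Recognizing $\|(b_j)_j\|_{l_{\theta_1}^k}=\|x\|_{l_{p_1,\theta_1}^{m,k}}$ and $\|(c_j)_j\|_{l_{\theta_2}^k}=\|x\|_{l_{p_2,\theta_2}^{m,k}}$ finishes the multiplicative inequality.

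The argument is essentially routine H\"older, so there is no serious obstacle; the points requiring care are mostly bookkeeping. One must carry the infinite exponents uniformly (when $p_i=\infty$ or $\theta_i=\infty$ the corresponding H\"older factor degenerates, using $\frac 1s=\frac{1-\lambda}{s_1}$ and the bound $|y_i|^{\lambda s}\le \|y\|_\infty^{\lambda s}$), and one must dispose of the trivial degenerate cases $\lambda\in\{0,1\}$. The only genuinely structural step is the composition of the two interpolations: the output $b_j^{1-\lambda}c_j^{\lambda}$ of the inner estimate is not itself an $l_s$ norm, so one cannot simply invoke the scalar inequality a second time directly; instead one needs the monotonicity of $\|\cdot\|_{l_\theta^k}$ together with the product form of H\"older above to pass correctly to the outer index.
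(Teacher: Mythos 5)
Your proposal is correct and follows essentially the same route as the paper: Hölder's inequality gives the scalar interpolation estimate at the inner level (columns) and again at the outer level, yielding the multiplicative inequality $\|x\|_{l_{p,\theta}^{m,k}}\le \|x\|_{l_{p_1,\theta_1}^{m,k}}^{1-\lambda}\|x\|_{l_{p_2,\theta_2}^{m,k}}^{\lambda}$, from which the inclusion is immediate. Your treatment is in fact slightly more careful than the paper's sketch, since you make explicit the composition step (monotonicity of $l_\theta^k$ plus the product form of Hölder applied to $(b_j^{1-\lambda}c_j^{\lambda})_j$) that the paper leaves implicit.
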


The idea of the proof is as follows. Applying H\"{o}lder's inequality, we get $$\|(a_i)_{i=1}^m\|_{l_p^m}\le \|(a_i)_{i=1}^m\|_{l_{p_1}^m}^{1-\lambda}\|(a_i)_{i=1}^m\|_{l_{p_2}^m}^\lambda, \; \|(b_j)_{j=1}^k\|_{l_\theta^k}\le \|(b_j)_{j=1}^k\|_{l_{\theta_1}^k}^{1-\lambda}\|(b_j)_{j=1}^k\|_{l_{\theta_2}^k}^\lambda.$$ This yields the inequality $$\|(x_{i,j})_{1\le i\le m, \, 1\le j\le k}\|_{l_{p,\theta}^{m,k}}\le \|(x_{i,j})_{1\le i\le m, \, 1\le j\le k}\|_{l_{p_1,\theta_1}^{m,k}}^{1-\lambda}\|(x_{i,j})_{1\le i\le m, \, 1\le j\le k}\|_{l_{p_2,\theta_2}^{m,k}}^{\lambda}.$$

Applying Lemma \ref{lem_int_emb}, by induction we get
\begin{Cor}
\label{emb_s} Let $\nu_i>0$, $1\le p_i\le \infty$, $1\le \theta_i\le \infty$, $\tau_i\ge 0$, $1\le i\le s$, $\sum \limits _{i=1}^s \tau_i=1$. Let the numbers $p$, $\theta\in [1, \, \infty]$ be defined by
\begin{align}
\label{emb_pt_s}
\frac 1p = \sum \limits _{i=1}^s \frac{\tau_i}{p_i}, \quad \frac{1}{\theta} = \sum \limits _{i=1}^s \frac{\tau_i}{\theta_i}.
\end{align}
Then
$$
\cap _{i=1}^s \nu_iB_{p_i,\theta_i}^{m,k} \subset \nu_1^{\tau_1}... \nu_s^{\tau_s} B^{m,k} _{p,\theta}.
$$
\end{Cor}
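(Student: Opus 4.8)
The plan is to prove this by induction on $s$, with Lemma \ref{lem_int_emb} (the case $s=2$) serving as both the base case and the engine of the inductive step. The case $s=1$ is trivial, since then $\tau_1=1$ forces $p=p_1$, $\theta=\theta_1$, and the claimed inclusion becomes an equality.

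Before the inductive step I would first dispose of degenerate weights. If some $\tau_i=0$, then $\nu_i^{\tau_i}=1$ and the $i$-th ball does not affect the parameters $p$, $\theta$ in \eqref{emb_pt_s}; dropping it only enlarges the intersection on the left, so it suffices to treat the case where all $\tau_i>0$. In particular, for $s\ge 2$ we then have $\tau:=\tau_1+\dots+\tau_{s-1}=1-\tau_s\in(0,1)$, which is what makes the renormalization below legitimate.

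For the inductive step (all $\tau_i>0$) I would group the first $s-1$ balls. Setting $\frac{1}{p'}=\frac1\tau\sum_{i=1}^{s-1}\frac{\tau_i}{p_i}$ and $\frac{1}{\theta'}=\frac1\tau\sum_{i=1}^{s-1}\frac{\tau_i}{\theta_i}$, the weights $\tau_i/\tau$ ($1\le i\le s-1$) are nonnegative and sum to $1$, so the induction hypothesis applied to these $s-1$ balls gives
$$
\cap_{i=1}^{s-1}\nu_iB_{p_i,\theta_i}^{m,k}\subset \mu B_{p',\theta'}^{m,k},\qquad \mu:=\nu_1^{\tau_1/\tau}\cdots\nu_{s-1}^{\tau_{s-1}/\tau}.
$$
Now I would apply Lemma \ref{lem_int_emb} to the two balls $\mu B_{p',\theta'}^{m,k}$ and $\nu_sB_{p_s,\theta_s}^{m,k}$ with $\lambda=\tau_s$. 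A direct computation shows that the resulting exponents coincide with $p$, $\theta$ from \eqref{emb_pt_s}: for instance $\frac{1-\tau_s}{p'}+\frac{\tau_s}{p_s}=\tau\cdot\frac1\tau\sum_{i=1}^{s-1}\frac{\tau_i}{p_i}+\frac{\tau_s}{p_s}=\sum_{i=1}^{s}\frac{\tau_i}{p_i}=\frac1p$, and likewise for $\theta$. Hence
$$
\mu B_{p',\theta'}^{m,k}\cap \nu_sB_{p_s,\theta_s}^{m,k}\subset \mu^{1-\tau_s}\nu_s^{\tau_s}B_{p,\theta}^{m,k}.
$$
Since $\mu^{1-\tau_s}=\mu^{\tau}=\nu_1^{\tau_1}\cdots\nu_{s-1}^{\tau_{s-1}}$, the prefactor is exactly $\nu_1^{\tau_1}\cdots\nu_s^{\tau_s}$, and chaining this with the previous inclusion (after writing $\cap_{i=1}^s\nu_iB_{p_i,\theta_i}^{m,k}=(\cap_{i=1}^{s-1}\nu_iB_{p_i,\theta_i}^{m,k})\cap\nu_sB_{p_s,\theta_s}^{m,k}$) completes the step.

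There is no serious obstacle here; the argument is a routine induction. The only points requiring care are the bookkeeping of vanishing weights (so that the normalizing factor $\tau$ is strictly positive) and the verification that the two renormalizations — first dividing the first $s-1$ weights by $\tau$, then recombining via $\lambda=\tau_s$ — reconstitute precisely the exponents and the product $\nu_1^{\tau_1}\cdots\nu_s^{\tau_s}$ prescribed in the statement.
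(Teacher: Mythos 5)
Your proof is correct and follows exactly the route the paper takes: the paper proves Corollary \ref{emb_s} by induction on $s$ from Lemma \ref{lem_int_emb}, which is precisely your argument, with the grouping of the first $s-1$ balls, the weight renormalization by $\tau=1-\tau_s$, and the application of the lemma with $\lambda=\tau_s$ spelled out. Your explicit handling of vanishing weights $\tau_i=0$ is a minor bookkeeping detail the paper leaves implicit, but nothing more.
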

For $p_i=\theta_i$ ($1\le i\le s$), this assertion was proved by Galeev \cite{galeev1}.

Applying Theorem \ref{1mixed}, Propositions \ref{pgq}, \ref{est_dn} and Corollary \ref{emb_s}, we get the upper estimate in Theorem \ref{main}:
\begin{align}
\label{main_up_est} d_n(M, \, l^{m,k}_{q,\sigma}) \underset{q,\sigma}{\lesssim} \min _{0\le j\le 7} \Psi_j(P; \, q, \, \sigma; \, m, \, k, \, n; \, \nu).
\end{align}

\section{Proof of the lower estimate: $A$ is finite, \\ $\{(1/p_\alpha, \, 1/\theta_\alpha)\}_{\alpha\in A}$ are in a general position}

Let $A$ be a finite set. We say that the points $\{(1/p_\alpha, \, 1/\theta_\alpha)\}_{\alpha\in A}$ are in a general position if the following conditions hold:
\begin{enumerate}
\item $p_\alpha\ne 2$, $p_\alpha \ne q$, $\theta_\alpha \ne 2$, $\theta_\alpha \ne \sigma$, $\alpha \in A$; in addition, $\frac{1/p_\alpha-1/q}{1/2-1/q} \ne \frac{1/\theta_\alpha-1/\sigma}{1/2-1/\sigma}$, $\alpha \in A$, for $q>2$ and $\sigma>2$;
\item $p_\alpha \ne p_\beta$, $\theta_\alpha \ne \theta _\beta$ for $\alpha \ne \beta$;
\item the points $(1/2, \, 1/2)$, $(1/2, \, 1/\sigma)$, $(1/q, \, 1/2)$, $(1/q, \, 1/\sigma)$ do not lie on segments $[(1/p_\alpha, \, 1/\theta_\alpha), \, (1/p_\beta, \, 1/\theta_\beta)]$, $\alpha,$ $\beta \in A$;
\item $\Delta_{\alpha,\beta,\gamma} \in {\cal R}$ for all different $\alpha$, $\beta$, $\gamma \in A$ (i.e., the points $(1/p_\alpha, \, 1/\theta_\alpha)$, $(1/p_\beta, \, 1/\theta_\beta)$, $(1/p_\gamma, \, 1/\theta_\gamma)$ do not lie on the same line).

Let
\begin{align}
\label{psi_def} \Psi := \Psi(P):=\Psi(P; \, q, \, \sigma; \, m, \, k, \, n; \, \nu) := \min _{0\le j\le 7} \Psi_j.
\end{align}
We prove the estimate
$$
d_n(M, \, l^{m,k}_{q,\sigma}) \underset{q,\sigma}{\gtrsim} \Psi.
$$
\end{enumerate}

\subsection{The case $\Psi = \Psi_0$}

Since the set $A$ is finite, there is $\alpha \in A$ such that
\begin{align}
\label{psi_na_phi_pa}
\Psi = \nu_\alpha \Phi(p_\alpha, \, \theta_\alpha).
\end{align}
In addition, since the points $\{(1/p_\beta, \, 1/\theta_\beta)\}_{\beta\in A}$ are in a general position, we have $p_\alpha \notin \{2, \, q\}$ and $\theta_\alpha \notin \{2, \, \sigma\}$.

{\bf Case $p_\alpha> q$, $\theta_\alpha > \sigma$.} We prove the estimate (see \eqref{phi1})
$$
d_n(M, \, l^{m,k}_{q,\sigma}) \underset{q,\sigma}{ \gtrsim} \nu_\alpha m^{1/q-1/p_\alpha} k^{1/\sigma - 1/\theta_\alpha}.
$$
To this end, we check that $\nu_\alpha m^{-1/p_\alpha} k^{-1/\theta_\alpha} V^{m,k}_{m,k} \subset M$. Then
$$
d_n(M, \, l^{m,k}_{q,\sigma}) \ge d_n(\nu_\alpha m^{-1/p_\alpha} k^{-1/\theta_\alpha} V^{m,k}_{m,k}, \, l^{m,k}_{q,\sigma}) \stackrel{(\ref{dn_vmk1})}{\underset{q,\sigma}{\gtrsim}} \nu_\alpha m^{1/q-1/p_\alpha} k^{1/\sigma-1/\theta_\alpha}.
$$
By (\ref{gamma_x_def})--(\ref{vrl_km}), it suffices to prove that
\begin{align}
\label{nu_a_b} \nu_\alpha m^{1/p_\beta-1/p_\alpha} k^{1/\theta_\beta -1/\theta_\alpha} \le \nu_\beta, \quad \beta \in A.
\end{align}

We set $$\lambda := \max \left\{ \mu \in [0, \, 1]:\; \frac{1-\mu}{p_\alpha} + \frac{\mu}{p_\beta}\le \frac 1q, \; \frac{1-\mu}{\theta_\alpha} + \frac{\mu}{\theta_\beta}\le \frac{1}{\sigma}\right\};$$
the numbers $p_*$ and $\theta_*$ are defined by 
\begin{align}
\label{p_star}
\frac{1}{p_*} = \frac{1-\lambda}{p_\alpha} + \frac{\lambda}{p_\beta}, \quad \frac{1}{\theta_*} = \frac{1-\lambda}{\theta_\alpha} + \frac{\lambda}{\theta_\beta}.
\end{align}
Notice that $\lambda \in (0, \, 1]$. In addition, $\nu_\alpha^{1-\lambda}\nu_\beta^\lambda \Phi(p_*, \, \theta_*)\ge \min \{\Psi_0, \, \Psi_1, \, \Psi_2\}$ (see (\ref{n1}), (\ref{n2}) and (\ref{psi0})--(\ref{psi2})). This together with (\ref{psi_na_phi_pa}) implies that $$\nu_\alpha \Phi(p_\alpha, \, \theta_\alpha)=\Psi \le \min \{\Psi_0, \, \Psi_1, \, \Psi_2\} \le \nu_\alpha^{1-\lambda}\nu_\beta^\lambda \Phi(p_*, \, \theta_*);$$ i.e.,
$$
\nu_\alpha m^{\frac 1q-\frac{1}{p_\alpha}} k^{\frac{1}{\sigma}-\frac{1}{\theta_\alpha}} \le \nu_\alpha^{1-\lambda}\nu_\beta^\lambda m^{\frac 1q-\frac{1-\lambda}{p_\alpha} - \frac{\lambda}{p_\beta}} k^{\frac{1}{\sigma}-\frac{1-\lambda}{\theta_\alpha} -\frac{\lambda}{\theta_\beta}}
$$
(see \eqref{phi1}); this yields (\ref{nu_a_b}).

{\bf Case $p_\alpha>q$, $\theta_\alpha < \sigma$.}

Let $n\le mk^{2/\sigma}$. By (\ref{phi2}), it suffices to prove the estimate
$$
d_n(M, \, l^{m,k}_{q,\sigma}) \underset{q,\sigma}{\gtrsim} \nu_\alpha m^{1/q-1/p_\alpha}.
$$
We check the inclusion $\nu_\alpha m^{-1/p_\alpha}  V^{m,k}_{m,1} \subset M$. Then
$$
d_n(M, \, l^{m,k}_{q,\sigma}) \ge d_n(\nu_\alpha m^{-1/p_\alpha} V^{m,k}_{m,1}, \, l^{m,k}_{q,\sigma}) \stackrel{(\ref{dn_vmk1})}{\underset{q,\sigma}{\gtrsim}} \nu_\alpha m^{1/q-1/p_\alpha}.
$$
It suffices to prove that
\begin{align}
\label{nu_a_b1} \nu_\alpha m^{1/p_\beta-1/p_\alpha} \le \nu_\beta, \quad \beta \in A.
\end{align}

Let $$\lambda := \max \left\{ \mu \in [0, \, 1]:\; \frac{1-\mu}{p_\alpha} + \frac{\mu}{p_\beta}\le \frac 1q, \; \frac{1-\mu}{\theta_\alpha} + \frac{\mu}{\theta_\beta}\ge \frac{1}{\sigma}\right\};$$
the numbers $p_*$ and $\theta_*$ are defined by (\ref{p_star}).
Then $\lambda \in (0, \, 1]$, $\nu_\alpha^{1-\lambda}\nu_\beta^\lambda \Phi(p_*, \, \theta_*)\ge \min \{\Psi_0, \, \Psi_1, \, \Psi_2\}$. This together with (\ref{psi_na_phi_pa}) yields that $\nu_\alpha \Phi(p_\alpha, \, \theta_\alpha) \le \nu_\alpha^{1-\lambda}\nu_\beta^\lambda \Phi(p_*, \, \theta_*)$; i.e.,
$$
\nu_\alpha m^{\frac 1q-\frac{1}{p_\alpha}} \le \nu_\alpha^{1-\lambda}\nu_\beta^\lambda m^{\frac 1q-\frac{1-\lambda}{p_\alpha} - \frac{\lambda}{p_\beta}}
$$
(see \eqref{phi2}); this implies (\ref{nu_a_b1}).

Let $mk^{2/\sigma}< n \le \frac{mk}{2}$. Then $\sigma>2$.

If $\theta_\alpha >2$, then, by (\ref{om_pq}), (\ref{phi2}), it suffices to prove the inequality
\begin{align}
\label{111}
d_n(M, \, l^{m,k}_{q,\sigma}) \underset{q,\sigma}{\gtrsim} \nu_\alpha m^{1/q-1/p_\alpha} \left(n^{-\frac 12} m^{\frac 12} k^{\frac{1}{\sigma}}\right) ^{\frac{1/\theta_\alpha-1/\sigma}{1/2-1/\sigma}}.
\end{align}
We set $\tilde l = \left(n^{\frac 12} m^{-\frac 12} k^{-\frac{1}{\sigma}}\right) ^{\frac{1}{1/2-1/\sigma}}$, $l =\lceil \tilde l\rceil$. Since $mk^{2/\sigma}\le n\le mk$, we have $1\le l \le k$. In addition, $n\le m^{\frac 2q} k^{\frac{2}{\sigma}} m^{1-\frac 2q} l^{1-\frac{2}{\sigma}}$. We check that $\nu_\alpha m^{-1/p_\alpha} l^{-1/\theta_\alpha} V^{m,k}_{m,l} \subset 2M$. Then
$$
d_n(M, \, l^{m,k}_{q,\sigma}) \gtrsim d_n(\nu_\alpha m^{-1/p_\alpha} l^{-1/\theta_\alpha} V^{m,k}_{m,l}, \, l^{m,k}_{q,\sigma}) \stackrel{(\ref{dn_vmk1})}{\underset{q,\sigma}{\gtrsim}} \nu_\alpha m^{1/q-1/p_\alpha} l^{1/\sigma-1/\theta_\alpha};
$$
this implies (\ref{111}).

It suffices to prove that
\begin{align}
\label{nu_a_b2} \nu_\alpha m^{1/p_\beta-1/p_\alpha} \tilde l^{1/\theta_\beta-1/\theta_\alpha} \le \nu_\beta, \quad \beta \in A.
\end{align}

We set $$\lambda := \max \left\{ \mu \in [0, \, 1]:\; \frac{1-\mu}{p_\alpha} + \frac{\mu}{p_\beta}\le \frac 1q, \; \frac{1-\mu}{\theta_\alpha} + \frac{\mu}{\theta_\beta}\in \left[\frac{1}{\sigma}, \, \frac 12\right]\right\},$$
the numbers $p_*$ and $\theta_*$ are defined by (\ref{p_star}).
Then $\lambda \in (0, \, 1]$. In addition, $$\nu_\alpha^{1-\lambda}\nu_\beta^\lambda \Phi(p_*, \, \theta_*) \ge \min \{\Psi_0, \, \Psi_1, \, \Psi_2, \, \Psi_4\}.$$ This together with (\ref{psi_na_phi_pa}) yields that $\nu_\alpha \Phi(p_\alpha, \, \theta_\alpha) \le \nu_\alpha^{1-\lambda}\nu_\beta^\lambda \Phi(p_*, \, \theta_*)$; i.e.,
$$
\nu_\alpha m^{\frac 1q-\frac{1}{p_\alpha}}\left(n^{-\frac 12} m^{\frac 12} k^{\frac{1}{\sigma}}\right) ^{\frac{1/\theta_\alpha-1/\sigma}{1/2-1/\sigma}} \le \nu_\alpha^{1-\lambda}\nu_\beta^\lambda m^{\frac 1q-\frac{1-\lambda}{p_\alpha} - \frac{\lambda}{p_\beta}} \left(n^{-\frac 12} m^{\frac 12} k^{\frac{1}{\sigma}}\right) ^{\frac{(1-\lambda)/\theta_\alpha+\lambda/\theta_\beta-1/\sigma}{1/2-1/\sigma}}
$$
(see \eqref{om_pq}, \eqref{phi2}); this implies (\ref{nu_a_b2}).

Now, let $\theta_\alpha < 2$. By (\ref{om_pq}), (\ref{phi2}), it suffices to prove the inequality
\begin{align}
\label{222}
d_n(M, \, l^{m,k}_{q,\sigma}) \underset{q,\sigma}{\gtrsim} \nu_\alpha m^{1/q-1/p_\alpha} n^{-\frac 12} m^{\frac 12} k^{\frac{1}{\sigma}}.
\end{align}
We check that $\nu_\alpha m^{-1/p_\alpha}  V^{m,k}_{m,1} \subset M$. Then
$$
d_n(M, \, l^{m,k}_{q,\sigma}) \ge d_n(\nu_\alpha m^{-1/p_\alpha} V^{m,k}_{m,1}, \, l^{m,k}_{q,\sigma}) \stackrel{(\ref{dn_vmk1})}{\underset{q,\sigma}{\gtrsim}} \nu_\alpha m^{-1/p_\alpha} n^{-1/2} m^{1/q} k^{1/\sigma} m^{1/2}
$$
(in the second inequality we used the condition $n\ge mk^{2/\sigma}$).

It suffices to prove (\ref{nu_a_b1}). Let $$\lambda := \max \left\{ \mu \in [0, \, 1]:\; \frac{1-\mu}{p_\alpha} + \frac{\mu}{p_\beta}\le \frac 1q, \; \frac{1-\mu}{\theta_\alpha} + \frac{\mu}{\theta_\beta}\ge \frac 12\right\}.$$

As in the previous cases, we get
$$
\nu_\alpha m^{1/q-1/p_\alpha} n^{-\frac 12} m^{\frac 12} k^{\frac{1}{\sigma}} \le \nu_\alpha^{1-\lambda}\nu_\beta^\lambda m^{\frac 1q-\frac{1-\lambda}{p_\alpha} - \frac{\lambda}{p_\beta}} n^{-\frac 12} m^{\frac 12} k^{\frac{1}{\sigma}};
$$
this implies (\ref{nu_a_b1}).

{\bf The case $p_\alpha<q$, $\theta_\alpha>\sigma$} is similar.

{\bf The case $p_\alpha <q$, $\theta_\alpha <\sigma$.}

Let $n\le m^{2/q}k^{2/\sigma}$. By (\ref{phi4})--(\ref{phi6}), it suffices to prove the estimate
$$
d_n(M, \, l^{m,k}_{q,\sigma}) \underset{q,\sigma}{\gtrsim} \nu_\alpha.
$$
To this end, we check the inclusion $\nu_\alpha V^{m,k}_{1,1} \subset M$, i.e.,
\begin{align}
\label{nu_a_b3} \nu_\alpha \le \nu_\beta, \quad \beta \in A;
\end{align}
then we use (\ref{dn_vmk1}).

Let $$\lambda := \max \left\{ \mu \in [0, \, 1]:\; \frac{1-\mu}{p_\alpha} + \frac{\mu}{p_\beta}\ge \frac 1q, \, \frac{1-\mu}{\theta_\alpha} + \frac{\mu}{\theta_\beta}\ge \frac{1}{\sigma}\right\};$$
the numbers $p_*$ and $\theta_*$ are defined by (\ref{p_star}). As in the previous cases, we get
$$
\nu_\alpha \le \nu_\alpha^{1-\lambda}\nu_\beta^\lambda;
$$
this implies (\ref{nu_a_b3}).

Let $n> m^{2/q}k^{2/\sigma}$. We consider the following cases.
\begin{enumerate}
\item $p_\alpha<2$, $\theta_\alpha<2$. We prove the estimate (see \eqref{phi6})
$$
d_n(M, \, l^{m,k}_{q,\sigma}) \underset{q,\sigma}{\gtrsim} \nu_\alpha n^{-\frac 12}m^{\frac 1q} k^{\frac{1}{\sigma}}.
$$
We again check the inclusion $\nu_\alpha V^{m,k}_{1,1} \subset M$, i.e., the inequality (\ref{nu_a_b3}). Let $$\lambda := \max \left\{ \mu \in [0, \, 1]:\; \frac{1-\mu}{p_\alpha} + \frac{\mu}{p_\beta}\ge \frac 12, \, \frac{1-\mu}{\theta_\alpha} + \frac{\mu}{\theta_\beta}\ge \frac{1}{2}\right\};$$
the numbers $p_*$ and $\theta_*$ are defined by (\ref{p_star}). Then $\nu_\alpha \Phi(p_\alpha, \, \theta_\alpha) \le \nu_\alpha^{1-\lambda}\nu_\beta^\lambda \Phi(p_*, \, \theta_*)$; i.e.,
$$
\nu_\alpha n^{-\frac 12}m^{\frac 1q} k^{\frac{1}{\sigma}}\le \nu_\alpha^{1-\lambda}\nu_\beta^\lambda n^{-\frac 12}m^{\frac 1q} k^{\frac{1}{\sigma}}.
$$
This implies (\ref{nu_a_b3}).

\item $p_\alpha > 2$, $\omega_{p_\alpha,q}< \omega_{\theta_\alpha,\sigma}$. Then $q>2$.

Let $m^{2/q}k^{2/\sigma}< n \le mk^{2/\sigma}$. It suffices to prove the estimate (see \eqref{phi4})
\begin{align}
\label{dn11}
d_n(M, \, l^{m,k}_{q,\sigma}) \underset{q,\sigma}{\gtrsim} \nu_\alpha \left(n^{-\frac 12}m^{\frac 1q} k^{\frac{1}{\sigma}}\right)^{\frac{1/p_\alpha-1/q}{1/2-1/q}}.
\end{align}
We set $\tilde r = \left(n^{\frac 12}m^{-\frac 1q} k^{-\frac{1}{\sigma}}\right)^{\frac{1}{1/2-1/q}}$, $r = \lceil \tilde r \rceil$. Then $1\le r \le m$ and $n \le m^{\frac 2q} k^{\frac{2}{\sigma}} r^{1-\frac 2q}$. We check the inclusion $\nu_\alpha r^{-1/p_\alpha} V^{m,k}_{r,1} \subset 2M$; then we get
$$
d_n(M, \, l^{m,k}_{q,\sigma}) \gtrsim d_n(\nu_\alpha r^{-1/p_\alpha} V^{m,k}_{r,1}, \, l^{m,k}_{q,\sigma}) \stackrel{(\ref{dn_vmk1})}{\underset{q,\sigma}{\gtrsim}} \nu_\alpha r^{1/q-1/p_\alpha};
$$
this implies (\ref{dn11}).

It suffices to check that
\begin{align}
\label{nu_a_b4} \nu_\alpha\tilde r^{1/p_\beta-1/p_\alpha}\le \nu_\beta, \quad \beta \in A.
\end{align}

For $\sigma>2$, we set 
$$
\begin{array}{l}
\lambda := \max \left\{ \mu \in [0, \, 1]:\; \frac{1-\mu}{p_\alpha} + \frac{\mu}{p_\beta}\in \left[\frac 1q, \, \frac 12\right], \right. \\ \left. \frac{(1-\mu)/p_\alpha+\mu/p_\beta-1/q}{1/2-1/q}\le \frac{(1-\mu)/\theta_\alpha+\mu/\theta_\beta-1/\sigma}{1/2-1/\sigma}\right\};
\end{array}
$$
for $\sigma=2$, we set
\begin{align}
\label{lam_max_mu}
\lambda := \max \left\{ \mu \in [0, \, 1]:\; \frac{1-\mu}{p_\alpha} + \frac{\mu}{p_\beta}\in \left[\frac 1q, \, \frac 12\right], \, \frac{1-\mu}{\theta_\alpha}+\frac{\mu}{\theta_\beta}\ge \frac 12\right\};
\end{align}
the numbers $p_*$ and $\theta_*$ are defined by (\ref{p_star}). Then $\nu_\alpha \Phi(p_\alpha, \, \theta_\alpha)\le \nu_\alpha^{1-\lambda}\nu_\beta^\lambda \Phi(p_*, \, \theta_*)$, i.e.,
$$
\nu_\alpha \left(n^{-\frac 12}m^{\frac 1q} k^{\frac{1}{\sigma}}\right)^{\frac{1/p_\alpha-1/q}{1/2-1/q}}\le \nu_\alpha^{1-\lambda}\nu_\beta^\lambda \left(n^{-\frac 12}m^{\frac 1q} k^{\frac{1}{\sigma}}\right)^{\frac{(1-\lambda)/p_\alpha+\lambda/p_\beta-1/q}{1/2-1/q}};
$$
this yields (\ref{nu_a_b4}).

Now, let $n>mk^{2/\sigma}$. Then $\sigma>2$. This case is considered similarly as the case $p_\alpha>q$, $\theta_\alpha < \sigma$ (see (\ref{phi2}), (\ref{phi4})); here $\lambda$ is defined by
$$
\begin{array}{l}
\lambda := \max \left\{ \mu \in [0, \, 1]:\; \frac{1-\mu}{p_\alpha} + \frac{\mu}{p_\beta}\in \left[\frac 1q, \, \frac 12\right], \;  \frac{1-\mu}{\theta_\alpha} + \frac{\mu}{\theta_\beta}\le \frac 12,\right. \\ \left. \frac{(1-\mu)/p_\alpha+\mu/p_\beta-1/q}{1/2-1/q}\le \frac{(1-\mu)/\theta_\alpha+\mu/\theta_\beta-1/\sigma}{1/2-1/\sigma}\right\} \quad \text{for }\theta_\alpha>2;
\end{array}
$$
for $\theta_\alpha<2$, $\lambda$ is defined by (\ref{lam_max_mu}).

\item The case $\theta_\alpha > 2$, $\omega_{p_\alpha,q}> \omega_{\theta_\alpha,\sigma}$ is similar to the previous.
\end{enumerate}
Notice that for $2<p_\alpha<q$ or $2<\theta_\alpha<\sigma$ we have $\omega_{p_\alpha,q}\ne \omega_{\theta_\alpha,\sigma}$, since the points $\{(1/p_\beta, \, 1/\theta_\beta)\}_{\beta\in A}$ are in a general position.

\subsection{The cases $\Psi = \Psi_1$, $\Psi = \Psi_2$, $\Psi = \Psi_3$ and $\Psi = \Psi_4$}

Since the set $A$ is finite, there are $\alpha$, $\beta \in A$ such that 
\begin{align}
\label{psi_eq_psi1}
p_\alpha>q>p_\beta, \quad \Psi =\Psi_1 = \nu_\alpha^{1-\hat \lambda_{\alpha,\beta}}\nu_\beta^{\hat \lambda_{\alpha,\beta}} \Phi(q, \, \hat \theta_{\alpha,\beta}),
\end{align}
\begin{align}
\label{psi_eq_psi2}
\theta_\alpha>\sigma >\theta_\beta, \quad \Psi =\Psi_2= \nu_\alpha^{1-\hat \mu_{\alpha,\beta}}\nu_\beta^{\hat \mu_{\alpha,\beta}} \Phi(\hat p_{\alpha,\beta}, \, \sigma),
\end{align}
\begin{align}
\label{psi3_psi} p_\alpha>2>p_\beta, \quad \Psi =\Psi_3= \nu_\alpha^{1-\tilde \lambda_{\alpha,\beta}} \nu_\beta^{\tilde \lambda_{\alpha,\beta}} \Phi(2, \, \tilde \theta_{\alpha,\beta}),
\end{align}
or
\begin{align}
\label{psi4_psi} \theta_\alpha>2>\theta_\beta, \quad \Psi =\Psi_4= \nu_\alpha^{1-\tilde \mu_{\alpha,\beta}} \nu_\beta^{\tilde \mu_{\alpha,\beta}} \Phi(\tilde p_{\alpha,\beta}, \, 2).
\end{align}
If $\Psi=\Psi_3$, we suppose that $q>2$, and if $\Psi=\Psi_4$, we suppose that $\sigma>2$ (otherwise, we have $\Psi=\Psi_1$ or $\Psi=\Psi_2$, respectively).

Since the points $\{(1/p_\gamma, \, 1/\theta_\gamma)\}_{\gamma\in A}$ are in a general position, we have $\hat \theta_{\alpha,\beta}$, $\tilde \theta_{\alpha,\beta}\notin \{2, \, \sigma\}$, $\hat p_{\alpha,\beta}, \tilde p_{\alpha,\beta} \notin \{2, \, q\}$.

We consider the cases in the following order:
\begin{enumerate}
\item a) (\ref{psi_eq_psi1}) holds with $\hat \theta_{\alpha,\beta}> \sigma$, b) (\ref{psi_eq_psi2}) holds with $\hat p_{\alpha,\beta}> q$;
\item a) (\ref{psi_eq_psi1}) holds with $\hat \theta_{\alpha,\beta}<\sigma$, $n\le mk^{2/\sigma}$, b) (\ref{psi_eq_psi2}) holds with $\hat p_{\alpha,\beta}<q$, $n\le m^{2/q}k$;
\item a) (\ref{psi3_psi}) holds with $\tilde \theta_{\alpha,\beta} > \sigma$, b) (\ref{psi4_psi}) holds with $\tilde p_{\alpha,\beta}>q$;
\item a) (\ref{psi_eq_psi1}) holds with $\hat \theta_{\alpha,\beta}< \sigma$, $n> mk^{2/\sigma}$, b)  (\ref{psi_eq_psi2}) holds with $\hat p_{\alpha,\beta}< q$, $n> m^{2/q}k$;
\item a) (\ref{psi3_psi}) holds with $\tilde \theta_{\alpha,\beta} <2$, b) (\ref{psi4_psi}) holds with $\tilde p_{\alpha,\beta}<2$;
\item a) (\ref{psi3_psi}) holds with $2<\tilde \theta_{\alpha,\beta} < \sigma$, b) (\ref{psi4_psi}) holds with $2<\tilde p_{\alpha,\beta}<q$.
\end{enumerate}

{\bf Case 1a)} (case 1b) is similar). By (\ref{phi1}), it suffices to prove that
\begin{align}
\label{q_dn_est}
d_n(M, \, l^{m,k}_{q,\sigma}) \underset{q,\sigma}{\gtrsim} \nu_\alpha^{1-\hat \lambda_{\alpha,\beta}}\nu_\beta^{\hat \lambda_{\alpha,\beta}} k^{1/\sigma -1/\hat \theta_{\alpha,\beta}}.
\end{align}
We define the number $r_{\alpha,\beta}$ by
\begin{align}
\label{r_ab_def}
\frac{\nu_\alpha}{\nu_\beta} = r_{\alpha,\beta}^{1/p_\alpha-1/p_\beta}k^{1/\theta_\alpha-1/\theta_\beta}.
\end{align}
Since $p_\alpha> p_\beta$, the number $r_{\alpha,\beta}$ is well-defined.

We show that
\begin{align}
\label{r_le_m} 1\le  r_{\alpha,\beta} \le m, \quad n \le m^{\frac 2q}kr_{\alpha,\beta}^{1-\frac 2q}.
\end{align}

Let $$\lambda := \min \left\{\mu\in [0, \, \hat \lambda_{\alpha,\beta}]: \frac{1-\mu}{\theta_\alpha}+\frac{\mu}{\theta_\beta}\le \frac{1}{\sigma}\right\},$$
$$\tilde \lambda:= \max\left\{\mu\in [\hat \lambda_{\alpha,\beta}, \, 1]: \frac{1-\mu}{\theta_\alpha}+\frac{\mu}{\theta_\beta}\le \frac{1}{\sigma}, \; \frac{1-\mu}{p_\alpha}+\frac{\mu}{p_\beta}\le \frac{1}{2}\right\} \; \text{for }q>2,$$ 
$$\tilde \lambda:= \max\left\{\mu\in [\hat \lambda_{\alpha,\beta}, \, 1]: \frac{1-\mu}{\theta_\alpha}+\frac{\mu}{\theta_\beta}\le \frac{1}{\sigma}\right\} \; \text{for }q=2.$$ 
Then $\lambda < \hat \lambda_{\alpha,\beta} < \tilde \lambda$. We also define the numbers $p_*$, $\theta_*$, $p_{**}$, $\theta_{**}$ by
\begin{align}
\label{p_st1}
\frac{1}{p_*} = \frac{1-\lambda}{p_\alpha} + \frac{\lambda}{p_\beta}, \quad \frac{1}{\theta_*} = \frac{1-\lambda}{\theta_\alpha} + \frac{\lambda}{\theta_\beta},
\end{align}
\begin{align}
\label{p_st2}
\frac{1}{p_{**}} = \frac{1-\tilde\lambda}{p_\alpha} + \frac{\tilde\lambda}{p_\beta}, \quad \frac{1}{\theta_{**}} = \frac{1-\tilde\lambda}{\theta_\alpha} + \frac{\tilde\lambda}{\theta_\beta}.
\end{align}
Then $\nu_\alpha^{1-\lambda}\nu_\beta^{\lambda} \Phi(p_*, \, \theta_*)\ge \min \{\Psi_0, \, \Psi_2\}$, $\nu_\alpha^{1-\tilde\lambda}\nu_\beta^{\tilde\lambda} \Phi(p_{**}, \, \theta_{**}) \ge \min \{\Psi_0, \, \Psi_2, \, \Psi_3\}$. This together with (\ref{psi_eq_psi1}) implies that
\begin{align}
\label{nu1111}
\nu_\alpha^{1-\hat \lambda_{\alpha,\beta}}\nu_\beta^{\hat \lambda_{\alpha,\beta}} \Phi(q, \, \hat \theta_{\alpha,\beta})\le \nu_\alpha^{1-\lambda}\nu_\beta^{\lambda} \Phi(p_*, \, \theta_*), 
\end{align}
\begin{align}
\label{nu2222}
\nu_\alpha^{1-\hat \lambda_{\alpha,\beta}}\nu_\beta^{\hat \lambda_{\alpha,\beta}} \Phi(q, \, \hat \theta_{\alpha,\beta})\le \nu_\alpha^{1-\tilde\lambda}\nu_\beta^{\tilde\lambda} \Phi(p_{**}, \, \theta_{**}).
\end{align}
Inequality (\ref{nu1111}) can be written as follows (see \eqref{phi1}):
$$
\nu_\alpha^{1-\hat \lambda_{\alpha,\beta}}\nu_\beta^{\hat \lambda_{\alpha,\beta}} k^{\frac{1}{\sigma} - \frac{1-\hat \lambda_{\alpha,\beta}}{\theta_\alpha} - \frac{\hat \lambda_{\alpha,\beta}}{\theta_\beta}} \le \nu_\alpha^{1-\lambda}\nu_\beta^{\lambda} m^{\frac 1q - \frac{1-\lambda}{p_\alpha} - \frac{\lambda}{p_\beta}}k^{\frac{1}{\sigma} - \frac{1-\lambda}{\theta_\alpha} - \frac{\lambda}{\theta_\beta}};
$$
this implies $\frac{\nu_\alpha}{\nu_\beta} \ge m^{\frac{1}{p_\alpha} -\frac{1}{p_\beta}} k^{\frac{1}{\theta_\alpha} - \frac{1}{\theta_\beta}}$. Since $p_\alpha>p_\beta$, this together with (\ref{r_ab_def}) yields that $r_{\alpha,\beta}\le m$. By (\ref{phi3}), the inequality (\ref{nu2222}) can be written as follows:
$$
\nu_\alpha^{1-\hat \lambda_{\alpha,\beta}}\nu_\beta^{\hat \lambda_{\alpha,\beta}} k^{\frac{1}{\sigma} - \frac{1-\hat \lambda_{\alpha,\beta}}{\theta_\alpha} - \frac{\hat \lambda_{\alpha,\beta}}{\theta_\beta}} \le \nu_\alpha^{1-\tilde\lambda}\nu_\beta^{\tilde\lambda} k^{\frac{1}{\sigma} - \frac{1-\tilde\lambda}{\theta_\alpha} - \frac{\tilde\lambda}{\theta_\beta}},
$$
for $n \le m^{2/q}k$, and
$$
\nu_\alpha^{1-\hat \lambda_{\alpha,\beta}}\nu_\beta^{\hat \lambda_{\alpha,\beta}} k^{\frac{1}{\sigma} - \frac{1-\hat \lambda_{\alpha,\beta}}{\theta_\alpha} - \frac{\hat \lambda_{\alpha,\beta}}{\theta_\beta}} \le \nu_\alpha^{1-\tilde\lambda}\nu_\beta^{\tilde\lambda} k^{\frac{1}{\sigma} - \frac{1-\tilde\lambda}{\theta_\alpha} - \frac{\tilde\lambda}{\theta_\beta}}(n^{\frac 12}m^{-\frac 1q} k^{-\frac 12}) ^{\frac{1/q-(1-\tilde \lambda)/p_\alpha -\tilde \lambda/p_\beta}{1/2-1/q}},
$$
for $n> m^{2/q}k$;
notice that the case $n>m^{2/q}k$ is possible only for $q>2$. Hence, $\frac{\nu_\alpha}{\nu_\beta} \le k^{\frac{1}{\theta_\alpha} -\frac{1}{\theta_\beta}}$ for $n\le m^{2/q}k$, $\frac{\nu_\alpha}{\nu_\beta} \le k^{\frac{1}{\theta_\alpha} -\frac{1}{\theta_\beta}}(n^{1/2}m^{-1/q}k^{-1/2})^{\frac{1/p_\alpha-1/p_\beta}{1/2-1/q}}$ for $n> m^{2/q}k$. This together with (\ref{r_ab_def}) implies that $r_{\alpha,\beta}\ge 1$ for $n\le m^{2/q}k$, $r_{\alpha,\beta} \ge (n^{1/2}m^{-1/q}k^{-1/2})^{\frac{1}{1/2-1/q}}$ for $n> m^{2/q}k$, which completes the proof of (\ref{r_le_m}).

Let $r= \lceil r_{\alpha,\beta}\rceil$, $$W = \nu_\alpha^{1-\hat \lambda_{\alpha,\beta}}\nu_\beta^{\hat \lambda_{\alpha,\beta}} r^{-1/q} k^{-1/\hat \theta_{\alpha,\beta}} V^{m,k}_{r,k}.$$ If $W\subset 2M$, then
$$
d_n(M, \, l^{m,k}_{q,\sigma}) \gtrsim d_n(W, \, l^{m,k}_{q,\sigma}) \stackrel{(\ref{dn_vmk1}), (\ref{r_le_m})}{\underset{q,\sigma}{\gtrsim}} \nu_\alpha^{1-\hat \lambda_{\alpha,\beta}}\nu_\beta^{\hat \lambda_{\alpha,\beta}} r^{-1/q} k^{-1/\hat \theta_{\alpha,\beta}}r^{1/q}k^{1/\sigma}=$$$$= \nu_\alpha^{1-\hat \lambda_{\alpha,\beta}}\nu_\beta^{\hat \lambda_{\alpha,\beta}} k^{1/\sigma-1/\hat \theta_{\alpha,\beta}};
$$
i.e., (\ref{q_dn_est}) holds.

In order to check the inclusion $W \subset 2M$, it suffices to show that
\begin{align}
\label{incl_ineq} \nu_\alpha^{1-\hat \lambda_{\alpha,\beta}}\nu_\beta^{\hat \lambda_{\alpha,\beta}} r_{\alpha,\beta}^{1/p_\gamma-1/q} k^{1/\theta_\gamma-1/\hat \theta_{\alpha,\beta}} \le \nu_\gamma, \quad \gamma \in A.
\end{align}
For $\gamma=\alpha$ or $\gamma=\beta$, it follows from (\ref{r_ab_def}).

Let $\gamma \notin \{\alpha, \, \beta\}$. We define the numbers $r_{\alpha,\gamma}$ and $r_{\gamma,\beta}$ by
\begin{align}
\label{r_ag_def} \frac{\nu_\alpha}{\nu_\gamma} = r_{\alpha,\gamma}^{1/p_\alpha-1/p_\gamma} k^{1/\theta_\alpha-1/\theta_\gamma}, \quad \frac{\nu_\gamma}{\nu_\beta} = r_{\gamma,\beta}^{1/p_\gamma-1/p_\beta} k^{1/\theta_\gamma - 1/\theta_\beta}.
\end{align}
Since $p_\alpha\ne p_\gamma$ and $p_\beta\ne p_\gamma$ (see the definition of the general position), the numbers $r_{\alpha,\gamma}$ and $r_{\gamma,\beta}$ are well-defined.

By (\ref{r_ab_def}), (\ref{r_ag_def}), the inequality (\ref{incl_ineq}) is equivalent to each of the following inequalities:
\begin{align}
\label{incl_ineq_equiv} r_{\alpha,\beta}^{1/p_\gamma - 1/p_\alpha} \le r_{\alpha,\gamma}^{1/p_\gamma-1/p_\alpha}, \quad r_{\alpha,\beta}^{1/p_\gamma - 1/p_\beta} \le r_{\gamma,\beta}^{1/p_\gamma-1/p_\beta}.
\end{align}

\begin{enumerate}
\item Let $p_\gamma<q$. We check the first inequality of (\ref{incl_ineq_equiv}). Since $p_\alpha>q>p_\gamma$, it is equivalent to $r_{\alpha,\beta} \le r_{\alpha,\gamma}$.

There is $\hat \lambda_{\alpha,\gamma}\in (0, \, 1)$ such that $\frac{1-\hat \lambda_{\alpha,\gamma}}{p_\alpha} + \frac{\hat \lambda_{\alpha,\gamma}}{p_\gamma} = \frac 1q$. The number $\hat \theta_{\alpha,\gamma}$ is defined by the equation $\frac{1}{\hat \theta_{\alpha,\gamma}} = \frac{1-\hat \lambda_{\alpha,\gamma}}{\theta_\alpha} + \frac{\hat \lambda_{\alpha,\gamma}}{\theta_\gamma}$ (see (\ref{n1})).

If $\hat \theta_{\alpha,\gamma}>\sigma$, then from $$\nu_\alpha^{1-\hat \lambda_{\alpha,\beta}}\nu_\beta^{\hat \lambda_{\alpha,\beta}} \Phi(q, \, \hat \theta_{\alpha,\beta}) \stackrel{(\ref{psi_eq_psi1})}{=}\Psi \le \Psi_1\le \nu_\alpha^{1-\hat \lambda_{\alpha,\gamma}}\nu_\gamma^{\hat \lambda_{\alpha,\gamma}} \Phi(q, \, \hat \theta_{\alpha,\gamma})$$
and (\ref{phi1}) we get
\begin{align}
\label{nu_a_s}
\nu_\alpha\left(\frac{\nu_\beta}{\nu_\alpha}\right) ^{\hat \lambda_{\alpha,\beta}} k^{1/\sigma -1/\hat \theta_{\alpha,\beta}} \le \nu_\alpha\left(\frac{\nu_\gamma}{\nu_\alpha}\right) ^{\hat \lambda_{\alpha,\gamma}} k^{1/\sigma -1/\hat \theta_{\alpha,\gamma}}.
\end{align}
Applying (\ref{r_ab_def}), (\ref{r_ag_def}) and the equalities $\frac{1}{q}-\frac{1}{p_\alpha} = \hat \lambda_{\alpha,\beta}\left(\frac{1}{p_\beta}-\frac{1}{p_\alpha}\right) = \hat \lambda_{\alpha,\gamma}\left(\frac{1}{p_\gamma}-\frac{1}{p_\alpha}\right)$, we get $r_{\alpha,\beta} ^{1/q-1/p_\alpha} \le r_{\alpha,\gamma} ^{1/q-1/p_\alpha}$. Since $p_\alpha>q$, we have $r_{\alpha,\beta} \le r_{\alpha,\gamma}$.

If $\hat \theta_{\alpha,\gamma}<\sigma$, then there is $\mu\in (0, \, 1)$ such that $\frac{1-\mu}{\hat \theta_{\alpha,\beta}} + \frac{\mu}{\hat \theta _{\alpha,\gamma}} = \frac{1}{\sigma}$. Then from
$$
\nu_\alpha^{1-\hat \lambda_{\alpha,\beta}}\nu_\beta^{\hat \lambda_{\alpha,\beta}} \Phi(q, \, \hat\theta_{\alpha,\beta}) =\Psi\le \Psi_6\le (\nu_\alpha^{1-\hat \lambda_{\alpha,\beta}}\nu_\beta^{\hat \lambda_{\alpha,\beta}})^{1-\mu} (\nu_\alpha^{1-\hat \lambda_{\alpha,\gamma}}\nu_\gamma^{\hat \lambda_{\alpha,\gamma}})^\mu \Phi(q, \, \sigma)
$$
we get (\ref{nu_a_s}); this again implies the inequality $r_{\alpha,\beta} \le r_{\alpha,\gamma}$.

\item Let $p_\gamma>q$. We check the second inequality of (\ref{incl_ineq_equiv}). Since $p_\gamma>q>p_\beta$, it is equivalent to $r_{\alpha,\beta}\ge r_{\gamma,\beta}$. There is $\hat \lambda_{\gamma,\beta}\in (0, \, 1)$ such that $\frac 1q = \frac{1-\hat \lambda_{\gamma,\beta}}{p_\gamma}+\frac{\hat \lambda_{\gamma,\beta}}{p_\beta}$. Arguing as in the previous case, we get $r_{\alpha,\beta}^{1/q-1/p_\beta} \le r_{\gamma,\beta} ^{1/q-1/p_\beta}$. Since $p_\beta<q$, this implies $r_{\alpha,\beta}\ge r_{\gamma,\beta}$.
\end{enumerate}

{\bf Case 2a)} (case 2b) is similar). 

By (\ref{phi2}), it suffices to prove the estimate
\begin{align}
\label{dnm_ts}
d_n(M, \, l_{q,\sigma}^{m,k}) \underset{q,\sigma}{\gtrsim} \nu_\alpha^{1-\hat \lambda _{\alpha,\beta}} \nu_\beta ^{\hat \lambda _{\alpha,\beta}}.
\end{align}
We define the number $r_{\alpha,\beta}$ by the equation
\begin{align}
\label{r_ab_def_0}
\frac{\nu_\alpha}{\nu_\beta} = r_{\alpha,\beta}^{1/p_\alpha-1/p_\beta}
\end{align}
(it is well-defined, since $p_\alpha>p_\beta$). We show that
\begin{align}
\label{r_m_n_km} 1\le r_{\alpha,\beta} \le m, \quad n\le m^{\frac 2q}k^{\frac{2}{\sigma}} r_{\alpha,\beta}^{1-\frac 2q}.
\end{align}
Let
$$\lambda := \min \left\{\mu\in [0, \, \hat \lambda_{\alpha,\beta}]: \frac{1-\mu}{\theta_\alpha}+\frac{\mu}{\theta_\beta}\ge \frac{1}{\sigma}\right\},$$
$$
\tilde \lambda:= \max\left\{\mu\in [\hat \lambda_{\alpha,\beta}, \, 1]: \frac{1-\mu}{p_\alpha}+\frac{\mu}{p_\beta}\le \frac{1}{2}, \right. $$
$$\left. \frac{(1-\mu)/p_\alpha+\mu/p_\beta-1/q}{1/2-1/q}\le \frac{(1-\mu)/\theta_\alpha + \mu/\theta_\beta-1/\sigma}{1/2-1/\sigma}\right\}\; \text{for }q>2, \; \sigma >2,$$
$$
\tilde \lambda:= \max\left\{\mu\in [\hat \lambda_{\alpha,\beta}, \, 1]: \frac{1-\mu}{p_\alpha}+\frac{\mu}{p_\beta}\le \frac{1}{2},\; \frac{1-\mu}{\theta_\alpha} + \frac{\mu}{\theta_\beta} \ge \frac 12\right\}\; \text{for }q>2, \; \sigma =2,
$$
$$
\tilde \lambda:= \max\left\{\mu\in [\hat \lambda_{\alpha,\beta}, \, 1]: \frac{1-\mu}{\theta_\alpha} + \frac{\mu}{\theta_\beta} \ge \frac{1}{\sigma}\right\}\;\text{for }q=2.
$$
Then $\lambda < \hat \lambda_{\alpha,\beta} < \tilde \lambda$. The numbers $p_*$, $\theta_*$, $p_{**}$, $\theta_{**}$ are defined by (\ref{p_st1}), (\ref{p_st2}).
As in the previous case, $$\nu_\alpha^{1-\hat \lambda_{\alpha,\beta}} \nu_\beta^{\hat \lambda_{\alpha,\beta}} \Phi(q, \, \hat \theta_{\alpha,\beta}) \le \nu_\alpha ^{1-\lambda} \nu_\beta^\lambda \Phi(p_*, \, \theta_*),$$ $$\nu_\alpha^{1-\hat \lambda_{\alpha,\beta}} \nu_\beta^{\hat \lambda_{\alpha,\beta}} \Phi(q, \, \hat \theta_{\alpha,\beta}) \le \nu_\alpha ^{1-\tilde \lambda} \nu_\beta^{\tilde\lambda} \Phi(p_{**}, \, \theta_{**});$$ hence (see (\ref{phi2}), (\ref{phi4}) with $n\le mk^{2/\sigma}$ for $q>2$, and (\ref{phi2}), (\ref{phi5}), (\ref{phi6}) with $n\le m^{2/q}k^{2/\sigma}$ for $q=2$),
\begin{align}
\label{nal_l_nam}
\nu_\alpha^{1-\hat \lambda_{\alpha,\beta}} \nu_\beta^{\hat \lambda_{\alpha,\beta}} \le \nu_\alpha ^{1-\lambda} \nu_\beta^\lambda m^{\frac 1q -\frac{1-\lambda}{p_\alpha}-\frac{\lambda}{p_\beta}},
\end{align}
\begin{align}
\label{nal_l_nam1}
\nu_\alpha^{1-\hat \lambda_{\alpha,\beta}} \nu_\beta^{\hat \lambda_{\alpha,\beta}} \le \nu_\alpha ^{1-\tilde \lambda} \nu_\beta^{\tilde\lambda}\min \{1, \, n^{-1/2}m^{1/q}k^{1/\sigma}\}^{\frac{(1-\tilde\lambda)/p_\alpha + \tilde\lambda/p_\beta-1/q}{1/2-1/q}} \quad \text{for }q>2,
\end{align}
\begin{align}
\label{nal_l_nam2}
\nu_\alpha^{1-\hat \lambda_{\alpha,\beta}} \nu_\beta^{\hat \lambda_{\alpha,\beta}} \le \nu_\alpha ^{1-\tilde \lambda} \nu_\beta^{\tilde\lambda} \quad \text{for }q=2.
\end{align}
From (\ref{nal_l_nam}) we get that $\frac{\nu_\alpha}{\nu_\beta} \ge m^{1/p_\alpha-1/p_\beta}$, and from (\ref{nal_l_nam1}), (\ref{nal_l_nam2}), we obtain that $\frac{\nu_\alpha}{\nu_\beta}\le 1$ for $n\le m^{2/q}k^{2/\sigma}$, $\frac{\nu_\alpha}{\nu_\beta}\le (n^{1/2}m^{-1/q}k^{-1/\sigma})^{\frac{1/p_\alpha-1/p_\beta}{1/2-1/q}}$ for $n>m^{2/q}k^{2/\sigma}$ (since $n\le mk^{2/\sigma}$, the second case is possible only for $q>2$). This together with (\ref{r_ab_def_0}) and the condition $p_\alpha>p_\beta$ yields that $1\le r_{\alpha,\beta}\le m$ for $n\le m^{2/q}k^{2/\sigma}$, and $(n^{1/2}m^{-1/q}k^{-1/\sigma})^{\frac{1}{1/2-1/q}}\le r_{\alpha,\beta}\le m$ for $m^{2/q}k^{2/\sigma}< n \le mk^{2/\sigma}$. This completes the proof of (\ref{r_m_n_km}).

Let $r = \lceil r_{\alpha,\beta} \rceil$, $W = \nu_\alpha^{1-\hat \lambda_{\alpha,\beta}} \nu_\beta^{\hat \lambda_{\alpha,\beta}} r^{-1/q} V^{m,k}_{r,1}$. We show that $W \subset 2M$. Then
$$
d_n(M, \, l_{q,\sigma}^{m,k}) \gtrsim d_n(W, \, l_{q,\sigma}^{m,k}) \stackrel{(\ref{dn_vmk1}), (\ref{r_m_n_km})}{\underset{q,\sigma}{\gtrsim}} \nu_\alpha^{1-\hat \lambda_{\alpha,\beta}} \nu_\beta^{\hat \lambda_{\alpha,\beta}} r^{-1/q} r^{1/q} = \nu_\alpha^{1-\hat \lambda_{\alpha,\beta}} \nu_\beta^{\hat \lambda_{\alpha,\beta}};
$$
hence (\ref{dnm_ts}) holds.

In order to prove the inclusion $W \subset 2M$, it suffices to check that
\begin{align}
\label{nu_a_nu_b_g}
\nu_\alpha^{1-\hat \lambda_{\alpha,\beta}} \nu_\beta^{\hat \lambda_{\alpha,\beta}} r_{\alpha,\beta} ^{1/p_\gamma-1/q} \le \nu_\gamma, \quad \gamma \in A.
\end{align}
If $\gamma=\alpha$ or $\gamma=\beta$, it follows from (\ref{r_ab_def_0}).

Let $\gamma \notin \{\alpha, \, \beta\}$. We define the numbers $r_{\alpha,\gamma}$ and $r_{\gamma,\beta}$ by the equations
\begin{align}
\label{r_a_b_g} \frac{\nu_\alpha}{\nu_\gamma} = r_{\alpha,\gamma}^{1/p_\alpha-1/p_\gamma}, \quad \frac{\nu_\gamma}{\nu_\beta} = r_{\gamma,\beta} ^{1/p_\gamma-1/p_\beta}.
\end{align}

From (\ref{r_ab_def_0}) and (\ref{r_a_b_g}) it follows that (\ref{nu_a_nu_b_g}) is equivalent to each of the inequalities (\ref{incl_ineq_equiv}).

\begin{enumerate}
\item Let $p_\gamma<q$. We check the first inequality (\ref{incl_ineq_equiv}). Since $p_\alpha>q>p_\gamma$, it is equivalent to the inequality $r_{\alpha,\beta} \le r_{\alpha,\gamma}$.

There is $\hat \lambda_{\alpha,\gamma}\in (0, \, 1)$ such that $\frac{1-\hat \lambda_{\alpha,\gamma}}{p_\alpha} + \frac{\hat \lambda_{\alpha,\gamma}}{p_\gamma} = \frac 1q$; the number $\hat \theta_{\alpha,\gamma}$ is defined by (\ref{n1}).

If $\hat \theta_{\alpha,\gamma}<\sigma$, then from $$\nu_\alpha^{1-\hat \lambda_{\alpha,\beta}}\nu_\beta^{\hat \lambda_{\alpha,\beta}} \Phi(q, \, \hat \theta_{\alpha,\beta})=\Psi\le \Psi_1 \le \nu_\alpha^{1-\hat \lambda_{\alpha,\gamma}}\nu_\gamma^{\hat \lambda_{\alpha,\gamma}} \Phi(q, \, \hat \theta_{\alpha,\gamma})$$
we get
\begin{align}
\label{nu_a_s1}
\nu_\alpha\left(\frac{\nu_\beta}{\nu_\alpha}\right) ^{\hat \lambda_{\alpha,\beta}} \le \nu_\alpha\left(\frac{\nu_\gamma}{\nu_\alpha}\right) ^{\hat \lambda_{\alpha,\gamma}}.
\end{align}
Applying (\ref{r_ab_def_0}), (\ref{r_a_b_g}) and the equality $\frac{1}{q}-\frac{1}{p_\alpha} = \hat \lambda_{\alpha,\beta}\left(\frac{1}{p_\beta}-\frac{1}{p_\alpha}\right) = \hat \lambda_{\alpha,\gamma}\left(\frac{1}{p_\gamma}-\frac{1}{p_\alpha}\right)$, we get $r_{\alpha,\beta} ^{1/q-1/p_\alpha} \le r_{\alpha,\gamma} ^{1/q-1/p_\alpha}$. Since $p_\alpha>q$, we have $r_{\alpha,\beta} \le r_{\alpha,\gamma}$.

If $\hat \theta_{\alpha,\gamma}>\sigma$, there is $\mu\in (0, \, 1)$ such that $\frac{1-\mu}{\hat \theta_{\alpha,\beta}} + \frac{\mu}{\hat \theta _{\alpha,\gamma}} = \frac{1}{\sigma}$. Then from
$$
\nu_\alpha^{1-\hat \lambda_{\alpha,\beta}}\nu_\beta^{\hat \lambda_{\alpha,\beta}} \Phi(q, \, \hat\theta_{\alpha,\beta})=\Psi\le \Psi_6 \le (\nu_\alpha^{1-\hat \lambda_{\alpha,\beta}}\nu_\beta^{\hat \lambda_{\alpha,\beta}})^{1-\mu} (\nu_\alpha^{1-\hat \lambda_{\alpha,\gamma}}\nu_\gamma^{\hat \lambda_{\alpha,\gamma}})^\mu \Phi(q, \, \sigma)
$$
we get (\ref{nu_a_s1}), which again implies the inequality $r_{\alpha,\beta} \le r_{\alpha,\gamma}$.

\item Let $p_\gamma>q$. We check the second inequality (\ref{incl_ineq_equiv}). Since $p_\gamma>q>p_\beta$, it is equivalent to $r_{\alpha,\beta}\ge r_{\gamma,\beta}$. Arguing as in the previous case, we get $r_{\alpha,\beta}^{1/q-1/p_\beta} \le r_{\gamma,\beta} ^{1/q-1/p_\beta}$. Since $p_\beta<q$, this implies that $r_{\alpha,\beta}\ge r_{\gamma,\beta}$.
\end{enumerate}

{\bf Case 3a)} (case 3b) is similar). 

Let first $n\le km^{2/q}$. We show that $\Psi=\Psi_0$ or (\ref{psi_eq_psi2}) holds with $\hat p_{\alpha,\beta}<q$; i.e., we get one of the cases, which are already considered.

We set
\begin{align}
\label{7lam}
\lambda = \min \left\{ \mu \in [0, \, \tilde \lambda_{\alpha,\beta}]:\; \frac{1-\mu}{p_\alpha} + \frac{\mu}{p_\beta} \ge \frac 1q, \; \frac{1-\mu}{\theta_\alpha} + \frac{\mu}{\theta_\beta} \le \frac{1}{\sigma}\right\},
\end{align}
\begin{align}
\label{7til_lam}
\tilde\lambda = \max \left\{ \mu \in [\tilde \lambda_{\alpha,\beta}, \, 1]:\; \frac{1-\mu}{\theta_\alpha} + \frac{\mu}{\theta_\beta} \le \frac{1}{\sigma}\right\}.
\end{align}
The numbers $p_*$, $\theta_*$, $p_{**}$, $\theta_{**}$ are defined by (\ref{p_st1}), (\ref{p_st2}). From the inequalities
\begin{align}
\label{nu_a_phi_tl}
\begin{array}{l}
\nu_\alpha^{1-\tilde \lambda_{\alpha,\beta}} \nu_\beta^{\tilde \lambda_{\alpha,\beta}} \Phi(2, \, \tilde \theta_{\alpha,\beta}) \le \nu_\alpha^{1-\lambda} \nu_\beta^{\lambda} \Phi(p_*, \, \theta_*), \\ \nu_\alpha^{1-\tilde \lambda_{\alpha,\beta}} \nu_\beta^{\tilde \lambda_{\alpha,\beta}} \Phi(2, \, \tilde \theta_{\alpha,\beta}) \le \nu_\alpha^{1-\tilde\lambda} \nu_\beta^{\tilde\lambda} \Phi(p_{**}, \, \theta_{**})
\end{array}
\end{align}
we get that (see \eqref{phi3})
$$
\nu_\alpha^{1-\tilde \lambda_{\alpha,\beta}} \nu_\beta^{\tilde \lambda_{\alpha,\beta}} k^{\frac{1}{\sigma} -\frac{1-\tilde \lambda_{\alpha,\beta}}{\theta_\alpha} - \frac{\tilde \lambda_{\alpha,\beta}}{\theta_\beta}} \le \nu_\alpha^{1-\lambda} \nu_\beta^{\lambda} k^{\frac{1}{\sigma} -\frac{1-\lambda}{\theta_\alpha} - \frac{ \lambda}{\theta_\beta}},
$$
$$
\nu_\alpha^{1-\tilde \lambda_{\alpha,\beta}} \nu_\beta^{\tilde \lambda_{\alpha,\beta}} k^{\frac{1}{\sigma} -\frac{1-\tilde \lambda_{\alpha,\beta}}{\theta_\alpha} - \frac{\tilde \lambda_{\alpha,\beta}}{\theta_\beta}} \le \nu_\alpha^{1-\tilde\lambda} \nu_\beta^{\tilde\lambda} k^{\frac{1}{\sigma} -\frac{1-\tilde\lambda}{\theta_\alpha} - \frac{\tilde \lambda}{\theta_\beta}}.
$$
Hence $\frac{\nu_\alpha}{\nu_\beta} = k^{1/\theta_\alpha-1/\theta_\beta}$ and $\Psi = \nu_\alpha^{1-\tilde\lambda} \nu_\beta^{\tilde\lambda} \Phi(p_{**}, \, \theta_{**})$. If $\tilde\lambda=1$, then $\Psi=\Psi_0$; if $\tilde\lambda<1$, then (\ref{psi_eq_psi2}) holds with $\hat p_{\alpha,\beta}<q$.

Let now $n> km^{2/q}$. We show that
\begin{align}
\label{dn_3_s_til}
d_n(M, \, l^{m,k}_{q,\sigma}) \underset{q,\sigma}{\gtrsim} \nu_\alpha^{1-\tilde \lambda_{\alpha,\beta}} \nu_\beta^{\tilde \lambda_{\alpha,\beta}} k^{1/\sigma-1/\tilde \theta_{\alpha,\beta}} n^{-1/2}m^{1/q}k^{1/2}.
\end{align}

We define the number $r_{\alpha,\beta}$ by
\begin{align}
\label{nuab_55}
\frac{\nu_\alpha}{\nu_\beta} = r_{\alpha,\beta}^{1/p_\alpha-1/p_\beta} k^{1/\theta_\alpha-1/\theta_\beta}.
\end{align}
The numbers $\lambda$, $\tilde \lambda$ are defined by (\ref{7lam}), (\ref{7til_lam}), and the numbers $p_*$, $\theta_*$, $p_{**}$, $\theta_{**}$, by (\ref{p_st1}) and (\ref{p_st2}).
We get (\ref{nu_a_phi_tl}); applying (\ref{phi3}) and taking into account the inequalities $2<p_*\le q$, $p_{**}>2$, we get
$$
\nu_\alpha^{1-\tilde \lambda_{\alpha,\beta}} \nu_\beta^{\tilde \lambda_{\alpha,\beta}} k^{1/\sigma-1/\tilde \theta_{\alpha,\beta}} n^{-1/2}m^{1/q}k^{1/2} \le \nu_\alpha^{1-\lambda} \nu_\beta^{\lambda} k^{1/\sigma-1/\theta_*} (n^{-1/2}m^{1/q}k^{1/2})^{\frac{1/p_*-1/q}{1/2-1/q}},
$$
$$
\nu_\alpha^{1-\tilde \lambda_{\alpha,\beta}} \nu_\beta^{\tilde \lambda_{\alpha,\beta}} k^{1/\sigma-1/\tilde \theta_{\alpha,\beta}} n^{-1/2}m^{1/q}k^{1/2} \le \nu_\alpha^{1-\tilde\lambda} \nu_\beta^{\tilde\lambda} k^{1/\sigma-1/\theta_{**}} n^{-1/2}m^{1/q}k^{1/2}.
$$
Hence
$$
(n^{1/2}m^{-1/q}k^{-1/2})^{\frac{1/p_\alpha-1/p_\beta}{1/2-1/q}}k^{1/\theta_\alpha-1/\theta_\beta}\le \frac{\nu_\alpha}{\nu_\beta} \le k^{1/\theta_\alpha-1/\theta_\beta}.
$$
This together with (\ref{nuab_55}) yields that $1\le r_{\alpha,\beta}\le (n^{1/2}m^{-1/q}k^{-1/2})^{\frac{1}{1/2-1/q}}\le m$ and
\begin{align}
\label{nm2q_r_ab_12q}
n\ge m^{\frac 2q}k^{\frac{2}{\sigma}} r_{\alpha,\beta}^{1-\frac 2q} k^{1-\frac{2}{\sigma}}.
\end{align}

We set $r=\lfloor r_{\alpha,\beta}\rfloor$, $W=\nu_\alpha^{1-\tilde \lambda_{\alpha,\beta}} \nu_\beta^{\tilde \lambda_{\alpha,\beta}} r^{-1/2}k^{-1/\tilde \theta_{\alpha,\beta}} V^{m,k}_{r,k}$. If $W\subset 2M$, then
$$
d_n(M, \, l^{m,k}_{q,\sigma}) \gtrsim d_n(W, \, l^{m,k}_{q,\sigma}) \stackrel{(\ref{dn_vmk1}), (\ref{nm2q_r_ab_12q})}{\underset{q,\sigma}{\gtrsim}} \nu_\alpha^{1-\tilde \lambda_{\alpha,\beta}} \nu_\beta^{\tilde \lambda_{\alpha,\beta}} r^{-1/2}k^{-1/\tilde \theta_{\alpha,\beta}} n^{-1/2} m^{1/q} k^{1/\sigma} r^{1/2} k^{1/2}=
$$
$$
=\nu_\alpha^{1-\tilde \lambda_{\alpha,\beta}} \nu_\beta^{\tilde \lambda_{\alpha,\beta}} k^{1/\sigma-1/\tilde \theta_{\alpha,\beta}} n^{-1/2} m^{1/q} k^{1/2};
$$
hence (\ref{dn_3_s_til}) holds.

In order to prove the inclusion $W\subset 2M$, it suffices to check the inequality
\begin{align}
\label{1vkl1} \nu_\alpha^{1-\tilde \lambda_{\alpha,\beta}} \nu_\beta^{\tilde \lambda_{\alpha,\beta}} r_{\alpha,\beta}^{1/p_\gamma-1/2} k^{1/\theta_\gamma-1/\tilde \theta_{\alpha,\beta}} \le \nu_{\gamma}, \quad \gamma \in A.
\end{align}
If $\gamma=\alpha$ or $\gamma=\beta$, it follows from (\ref{nuab_55}).

Let $\gamma \notin \{\alpha, \, \beta\}$. We define the numbers $r_{\alpha,\gamma}$ and $r_{\gamma,\beta}$ by
\begin{align}
\label{nu_ag_r_agk}
\frac{\nu_\alpha}{\nu_\gamma} =r_{\alpha,\gamma}^{1/p_\alpha -1/p_\gamma} k^{1/\theta_\alpha-1/\theta_\gamma}, \quad \frac{\nu_\gamma}{\nu_\beta} =r_{\gamma,\beta}^{1/p_\gamma -1/p_\beta} k^{1/\theta_\gamma-1/\theta_\beta}.
\end{align}
Then (\ref{1vkl1}) is equivalent to each of the following inequalities:
$$
r_{\alpha,\beta}^{1/p_\gamma-1/p_\alpha} \le r_{\alpha,\gamma}^{1/p_\gamma-1/p_\alpha}, \quad r_{\alpha,\beta}^{1/p_\gamma-1/p_\beta} \le r_{\gamma,\beta}^{1/p_\gamma-1/p_\beta}.
$$
If $p_\gamma<2$, we check the first inequality, i.e., $r_{\alpha,\beta}\le r_{\alpha,\gamma}$. If $p_\gamma>2$, we check the second inequality, i.e., $r_{\alpha,\beta} \ge r_{\gamma,\beta}$.

Let $p_\gamma<2$. If $\tilde \theta_{\alpha,\gamma}>2$, then from the inequality $\nu_\alpha^{1-\tilde \lambda_{\alpha,\beta}} \nu_\beta^{\tilde \lambda_{\alpha,\beta}}\Phi(2, \, \tilde \theta_{\alpha,\beta}) \le \nu_\alpha^{1-\tilde \lambda_{\alpha,\gamma}} \nu_\gamma^{\tilde \lambda_{\alpha,\gamma}}\Phi(2, \, \tilde \theta_{\alpha,\gamma})$ we get (see (\ref{phi3}), (\ref{phi5}))
$$
\nu_\alpha^{1-\tilde \lambda_{\alpha,\beta}} \nu_\beta^{\tilde \lambda_{\alpha,\beta}}k^{1/\sigma-1/\tilde \theta_{\alpha,\beta}}n^{-1/2}m^{1/q}k^{1/2} \le \nu_\alpha^{1-\tilde \lambda_{\alpha,\gamma}} \nu_\gamma^{\tilde \lambda_{\alpha,\gamma}}k^{1/\sigma-1/\tilde \theta_{\alpha,\gamma}}n^{-1/2}m^{1/q}k^{1/2};
$$
this together with (\ref{nuab_55}), (\ref{nu_ag_r_agk}) implies that
$r_{\alpha,\beta}^{1/2-1/p_\alpha} \le r_{\alpha,\gamma}^{1/2-1/p_\alpha}$; hence $r_{\alpha,\beta}\le r_{\alpha,\gamma}$. If $\tilde \theta_{\alpha,\gamma}<2$, then there exists $\mu \in (0, \, 1)$ such that $\frac 12 =\frac{1-\mu}{\tilde \theta_{\alpha,\beta}} +\frac{\mu}{\theta_{\alpha,\gamma}}$. From $$\nu_\alpha^{1-\tilde \lambda_{\alpha,\beta}} \nu_\beta^{\tilde \lambda_{\alpha,\beta}}\Phi(2, \, \tilde \theta_{\alpha,\beta})=\Psi\le \Psi_7 \le (\nu_\alpha^{1-\tilde \lambda_{\alpha,\beta}} \nu_\beta^{\tilde \lambda_{\alpha,\beta}})^{1-\mu}(\nu_\alpha^{1-\tilde \lambda_{\alpha,\gamma}} \nu_\gamma^{\tilde \lambda_{\alpha,\gamma}})^\mu \Phi(2, \, 2)$$
we get $r_{\alpha,\beta}^{1/2-1/p_\alpha} \le r_{\alpha,\gamma}^{1/2-1/p_\alpha}$; i.e., $r_{\alpha,\beta} \le r_{\alpha,\gamma}$.

If $p_\gamma>2$, we similarly get $r_{\alpha,\beta}^{1/2-1/p_\beta} \le r_{\gamma,\beta}^{1/2-1/p_\beta}$; hence $r_{\alpha,\beta} \ge r_{\gamma,\beta}$.

{\bf Case 4a)} (case 4b) is similar). We have $\sigma>2$.

First we consider the case $q>2$. We show that either $\Psi = \Psi_0 = \nu_\alpha \Phi(p_\alpha, \, \theta_\alpha)$, or (\ref{psi_eq_psi2}) holds with $\hat p_{\alpha, \beta} > q$, or (\ref{psi4_psi}) holds with $\tilde p_{\alpha,\beta}>q$. Then the proof of the lower estimate can be reduced to the cases, which are already considered.

For $\hat \theta_{\alpha,\beta}>2$, we set
$$
\lambda = \min \left\{ \mu \in [0, \, \hat \lambda_{\alpha,\beta}]:\; \frac{1-\mu}{\theta_\alpha}+ \frac{\mu}{\theta_\beta} \in \left[\frac{1}{\sigma}, \, \frac{1}{2}\right]\right\},
$$
$$
\begin{array}{c}
\tilde\lambda = \max \left\{ \mu \in [\hat \lambda_{\alpha,\beta}, \, 1]:\; \frac{1-\mu}{\theta_\alpha}+ \frac{\mu}{\theta_\beta} \le \frac 12, \right. \\
\left. \frac{(1-\mu)/p_\alpha+\mu/p_\beta-1/q} {1/2-1/q}\le \frac{(1-\mu)/\theta_\alpha+\mu/\theta_\beta-1/\sigma}{1/2-1/\sigma}\right\};
\end{array}
$$
for $\hat \theta_{\alpha,\beta}<2$, we set
$$
\lambda = \min \left\{ \mu \in [0, \, \hat \lambda_{\alpha,\beta}]:\; \frac{1-\mu}{\theta_\alpha}+ \frac{\mu}{\theta_\beta} \ge \frac 12\right\},
$$
$$
\tilde\lambda = \max \left\{ \mu \in [\hat \lambda_{\alpha,\beta}, \, 1]:\; \frac{1-\mu}{p_\alpha}+ \frac{\mu}{p_\beta} \le \frac{1}{2}, \; \frac{1-\mu}{\theta_\alpha}+ \frac{\mu}{\theta_\beta} \ge \frac 12 \right\};
$$
the numbers $p_*$, $\theta_*$, $p_{**}$, $\theta_{**}$ are defined by (\ref{p_st1}), (\ref{p_st2}). From the inequalities $$\nu_\alpha^{1-\hat \lambda_{\alpha,\beta}} \nu_\beta^{\hat \lambda_{\alpha,\beta}} \Phi(q, \, \hat \theta_{\alpha,\beta}) \le \nu_\alpha^{1-\lambda} \nu_\beta^\lambda \Phi(p_*, \, \theta_*),$$ $$\nu_\alpha^{1-\hat \lambda_{\alpha,\beta}} \nu_\beta^{\hat \lambda_{\alpha,\beta}} \Phi(q, \, \hat \theta_{\alpha,\beta}) \le \nu_\alpha^{1-\tilde\lambda} \nu_\beta^{\tilde\lambda} \Phi(p_{**}, \, \theta_{**})$$ we get:
\begin{enumerate}
\item If $\hat\theta_{\alpha,\beta}>2$, then (see (\ref{phi2}), (\ref{phi4}))
\begin{align}
\label{5678}
\begin{array}{c}
\nu_\alpha^{1-\hat \lambda_{\alpha,\beta}} \nu_\beta^{\hat \lambda_{\alpha,\beta}} (n^{-1/2} m^{1/2}k^{1/\sigma})^{\frac{(1-\hat \lambda_{\alpha,\beta})/\theta_\alpha+\hat \lambda_{\alpha,\beta}/\theta_\beta-1/\sigma}{1/2-1/\sigma}} \le \\\le\nu_\alpha^{1-\lambda} \nu_\beta^{\lambda} m^{\frac 1q -\frac{1-\lambda}{p_\alpha} -\frac{\lambda}{p_\beta}} (n^{-1/2} m^{1/2}k^{1/\sigma})^{\frac{(1-\lambda)/\theta_\alpha+ \lambda/\theta_\beta-1/\sigma}{1/2-1/\sigma}},
\end{array}
\end{align}
$$\nu_\alpha^{1-\hat \lambda_{\alpha,\beta}} \nu_\beta^{\hat \lambda_{\alpha,\beta}} (n^{-1/2} m^{1/2}k^{1/\sigma})^{\frac{(1-\hat \lambda_{\alpha,\beta})/\theta_\alpha+\hat \lambda_{\alpha,\beta}/\theta_\beta-1/\sigma}{1/2-1/\sigma}} \le$$$$\le \nu_\alpha^{1-\tilde\lambda} \nu_\beta^{\tilde\lambda} m^{\frac 1q -\frac{1-\tilde\lambda}{p_\alpha} -\frac{\tilde\lambda}{p_\beta}} (n^{-1/2} m^{1/2}k^{1/\sigma})^{\frac{(1-\tilde\lambda)/\theta_\alpha+ \tilde\lambda/\theta_\beta-1/\sigma}{1/2-1/\sigma}}.
$$
This implies that
$$
\frac{\nu_\alpha}{\nu_\beta} = m^{1/p_\alpha-1/p_\beta} (n^{1/2} m^{-1/2}k^{-1/\sigma})^{\frac{1/\theta_\alpha-1/\theta_\beta}{1/2-1/\sigma}}.
$$
Hence $\Psi = \nu_\alpha^{1-\lambda}\nu_\beta^\lambda \Phi(p_*, \, \theta_*)$. If $2<\theta_\alpha<\sigma$, then $p_*=p_\alpha$, $\theta_*=\theta_\alpha$. If $\theta_\alpha>\sigma$, then $p_*=\hat p_{\alpha, \, \beta} \in (q, \, p_\alpha)$, $\theta_* = \sigma$. If $\theta_\alpha<2$, then $p_*=\tilde p_{\alpha, \, \beta} \in (q, \, p_\alpha)$, $\theta_* = 2$.

\item If $\hat \theta_{\alpha,\beta}<2$, then
\begin{align}
\label{6789}
\nu_\alpha^{1-\hat \lambda_{\alpha,\beta}} \nu_\beta^{\hat \lambda_{\alpha,\beta}} n^{-1/2} m^{1/2}k^{1/\sigma} \le \nu_\alpha^{1- \lambda} \nu_\beta^{\lambda} m^{\frac 1q -\frac{1-\lambda}{p_\alpha} -\frac{\lambda}{p_\beta}}n^{-1/2} m^{1/2}k^{1/\sigma},
\end{align}
$$
\nu_\alpha^{1-\hat \lambda_{\alpha,\beta}} \nu_\beta^{\hat \lambda_{\alpha,\beta}} n^{-1/2} m^{1/2}k^{1/\sigma} \le \nu_\alpha^{1- \tilde\lambda} \nu_\beta^{\tilde\lambda} m^{\frac 1q -\frac{1-\tilde\lambda}{p_\alpha} -\frac{\tilde\lambda}{p_\beta}}n^{-1/2} m^{1/2}k^{1/\sigma}.
$$
Hence $\frac{\nu_\alpha}{\nu_\beta}=m^{1/p_\alpha-1/p_\beta}$, and again we have $\Psi = \nu_\alpha^{1-\lambda}\nu_\beta^\lambda \Phi(p_*, \, \theta_*)$. If $\theta_\alpha<2$, then $p_*=p_\alpha$, $\theta_* = \theta_\alpha$. If $\theta_\alpha>2$, then $p_*=\tilde p_{\alpha, \, \beta} \in (q, \, p_\alpha)$, $\theta_* = 2$.
\end{enumerate}

Let now $q=2$. We show that (see \eqref{phi2})
\begin{align}
\label{dn_m_lqs} d_n(M, \, l^{m,k}_{q,\sigma}) \underset{q,\sigma}{\gtrsim} \nu_\alpha^{1-\hat \lambda_{\alpha,\beta}} \nu_\beta^{\hat \lambda _{\alpha,\beta}} (n^{-1/2}m^{1/2}k^{1/\sigma}) ^{\omega_{\hat \theta_{\alpha,\beta},\sigma}}.
\end{align}

We set $\tilde l = (n^{1/2}m^{-1/2}k^{-1/\sigma})^{\frac{1}{1/2-1/\sigma}}$ for $\hat \theta_{\alpha,\beta}>2$, and $\tilde l=1$ for $\hat \theta_{\alpha,\beta}<2$. The number $r_{\alpha,\beta}$ is defined by the equation
\begin{align}
\label{nu_ab_4a}
\frac{\nu_\alpha}{\nu_\beta} = r_{\alpha,\beta}^{1/p_\alpha-1/p_\beta} \tilde l^{1/\theta_\alpha-1/\theta_\beta}.
\end{align}
For $\hat \theta_{\alpha,\beta}>2$, we set 
$$
\lambda=\min \left\{ \mu\in [0, \, \hat \lambda_{\alpha,\beta}]:\; \frac{1-\mu}{\theta_\alpha}+\frac{\mu}{\theta_\beta} \in \left[\frac{1}{\sigma}, \, \frac 12\right]\right\},
$$
$$
\tilde\lambda=\max \left\{ \mu\in [\hat \lambda_{\alpha,\beta}, \, 1]:\; \frac{1-\mu}{\theta_\alpha}+\frac{\mu}{\theta_\beta} \in \left[\frac{1}{\sigma}, \, \frac 12\right]\right\},
$$
and for $\hat \theta_{\alpha,\beta}<2$,
$$
\lambda=\min \left\{ \mu\in [0, \, \hat \lambda_{\alpha,\beta}]:\; \frac{1-\mu}{\theta_\alpha}+\frac{\mu}{\theta_\beta} \ge \frac 12\right\},
$$
$$
\tilde\lambda=\max \left\{ \mu\in [\hat \lambda_{\alpha,\beta}, \, 1]:\; \frac{1-\mu}{\theta_\alpha}+\frac{\mu}{\theta_\beta} \ge \frac 12\right\}.
$$

The numbers $p_{**}$, $\theta_{**}$ are defined by (\ref{p_st1}), (\ref{p_st2}). From the inequalities $$\nu_\alpha^{1-\hat \lambda_{\alpha,\beta}} \nu_\beta^{\hat \lambda_{\alpha,\beta}} \Phi(q, \, \hat \theta_{\alpha,\beta}) \le \nu_\alpha^{1-\lambda} \nu_\beta^\lambda \Phi(p_*, \, \theta_*),$$ $$\nu_\alpha^{1-\hat \lambda_{\alpha,\beta}} \nu_\beta^{\hat \lambda_{\alpha,\beta}} \Phi(q, \, \hat \theta_{\alpha,\beta}) \le \nu_\alpha^{1-\tilde\lambda} \nu_\beta^{\tilde\lambda} \Phi(p_{**}, \, \theta_{**})$$ we get:
\begin{enumerate}
\item If $\hat \theta_{\alpha,\beta}>2$, then (see (\ref{phi2}), (\ref{phi5}) with $q=2$) the inequality (\ref{5678}) holds and 
$$\nu_\alpha^{1-\hat \lambda_{\alpha,\beta}} \nu_\beta^{\hat \lambda_{\alpha,\beta}} (n^{-1/2} m^{1/2}k^{1/\sigma})^{\frac{(1-\hat \lambda_{\alpha,\beta})/\theta_\alpha+\hat \lambda_{\alpha,\beta}/\theta_\beta-1/\sigma}{1/2-1/\sigma}} \le$$$$\le \nu_\alpha^{1-\tilde\lambda} \nu_\beta^{\tilde\lambda} (n^{-1/2} m^{1/2}k^{1/\sigma})^{\frac{(1-\tilde\lambda)/\theta_\alpha+ \tilde\lambda/\theta_\beta-1/\sigma}{1/2-1/\sigma}}.
$$
Hence
$m^{1/p_\alpha-1/p_\beta}\tilde l^{1/\theta_\alpha-1/\theta_\beta}\le \frac{\nu_\alpha}{\nu_\beta} \le \tilde l^{1/\theta_\alpha-1/\theta_\beta}$. This together with (\ref{nu_ab_4a}) yields that $1\le r_{\alpha,\beta}\le m$. In addition, $1\le \tilde l \le k$, $n = mk^{\frac{2}{\sigma}} \tilde l^{1-\frac{2}{\sigma}}$.

We set $r=\lceil r_{\alpha,\beta}\rceil$, $l=\lceil \tilde l \rceil$, $W=\nu_\alpha^{1-\hat \lambda_{\alpha,\beta}} \nu_\beta^{\hat \lambda_{\alpha,\beta}}r^{-1/q}l^{-1/\hat{\theta}_{\alpha,\beta}} V^{m,k}_{r,l}$. Then, if $W\subset 4M$, we get
$$
d_n(M, \, l^{m,k}_{q,\sigma}) \gtrsim d_n(W, \, l^{m,k}_{q,\sigma}) \stackrel{(\ref{dn_vmk1})}{\underset{q,\sigma}{\gtrsim}} \nu_\alpha^{1-\hat \lambda_{\alpha,\beta}} \nu_\beta^{\hat \lambda_{\alpha,\beta}}r^{-1/q}l^{-1/\hat{\theta}_{\alpha,\beta}} r^{1/q} l^{1/\sigma} \asymp
$$
$$
\asymp \nu_\alpha^{1-\hat \lambda_{\alpha,\beta}} \nu_\beta^{\hat \lambda_{\alpha,\beta}} (n^{-1/2} m^{1/2}k^{1/\sigma})^{\frac{1/\hat \theta_{\alpha,\beta}-1/\sigma}{1/2-1/\sigma}};
$$
hence (\ref{dn_m_lqs}) holds.
\item If $\hat \theta_{\alpha,\beta}<2$, then (\ref{6789}) holds and (see (\ref{phi6}) with $q=2$)
$$
\nu_\alpha^{1-\hat \lambda_{\alpha,\beta}} \nu_\beta^{\hat \lambda_{\alpha,\beta}} n^{-1/2} m^{1/2}k^{1/\sigma} \le \nu_\alpha^{1- \tilde\lambda} \nu_\beta^{\tilde\lambda}n^{-1/2} m^{1/2}k^{1/\sigma};
$$
again we get $m^{1/p_\alpha-1/p_\beta}\tilde l^{1/\theta_\alpha-1/\theta_\beta}\le \frac{\nu_\alpha}{\nu_\beta} \le \tilde l^{1/\theta_\alpha-1/\theta_\beta}$ and $1\le r_{\alpha,\beta}\le m$. Let $r =\lfloor r_{\alpha,\beta}\rfloor$, $W=\nu_\alpha^{1-\hat \lambda_{\alpha,\beta}} \nu_\beta^{\hat \lambda_{\alpha,\beta}} r^{-1/2}V^{m,k}_{r,1}$. If $W \subset 4M$, then from the inequality $n\ge mk^{2/\sigma}$ we get
$$
d_n(M, \, l^{m,k}_{q,\sigma}) \gtrsim d_n(W, \, l^{m,k}_{q,\sigma}) \stackrel{(\ref{dn_vmk1})}{\underset{q,\sigma}{\gtrsim}} \nu_\alpha^{1-\hat \lambda_{\alpha,\beta}} \nu_\beta^{\hat \lambda_{\alpha,\beta}}r^{-1/2}n^{-1/2}m^{1/2}k^{1/\sigma}r^{1/2}=$$$$=\nu_\alpha^{1-\hat \lambda_{\alpha,\beta}} \nu_\beta^{\hat \lambda_{\alpha,\beta}}n^{-1/2}m^{1/2}k^{1/\sigma}.
$$
\end{enumerate}
The inclusion $W\subset 4M$ can be proved as in the cases 2a) and 3a); the numbers $r_{\alpha,\gamma}$ and $r_{\gamma,\beta}$ are defined by the equations $$\frac{\nu_\alpha}{\nu_\gamma} = r_{\alpha,\gamma}^{1/p_\alpha-1/p_\gamma} \tilde l^{1/\theta_\alpha-1/\theta_\gamma}, \quad \frac{\nu_\gamma}{\nu_\beta}= r_{\gamma,\beta}^{1/p_\gamma-1/p_\beta} \tilde l^{1/\theta_\gamma-1/\theta_\beta}.$$
Here we apply the inequalities
$$
\nu_\alpha^{1-\hat \lambda_{\alpha,\beta}} \nu_\beta^{\hat \lambda_{\alpha,\beta}} \tilde l^{1/\sigma-1/\hat\theta_{\alpha,\beta}} \le \nu_\alpha^{1-\hat \lambda_{\alpha,\gamma}} \nu_\gamma^{\hat \lambda_{\alpha,\gamma}} \tilde l^{1/\sigma-\hat\theta_{\alpha,\gamma}} \quad \text{for }p_\gamma<2,
$$
$$
\nu_\alpha^{1-\hat \lambda_{\alpha,\beta}} \nu_\beta^{\hat \lambda_{\alpha,\beta}} \tilde l^{1/\sigma-1/\hat\theta_{\alpha,\beta}} \le \nu_\gamma^{1-\hat \lambda_{\gamma,\beta}} \nu_\beta^{\hat \lambda_{\gamma,\beta}} \tilde l^{1/\sigma-\hat\theta_{\gamma,\beta}} \quad \text{for }p_\gamma>2,
$$
which follow from (\ref{psi_eq_psi1}).

Hence, the consideration of the cases $\Psi=\Psi_1$ and $\Psi=\Psi_2$ is completed.

Now we consider the cases 5a), b) and 6a), b). First we notice that if $n \le m^{2/q}k^{2/\sigma}$, then $\Psi=\Psi_0$, $\Psi=\Psi_1$ or $\Psi=\Psi_2$; i.e., the estimating can be reduced to the cases which are already considered. To this end, we argue as in case 3a) and get $\frac{\nu_\alpha}{\nu_\beta}=1$; here we use the equality $\Phi(p, \, \theta)=1$ for $1\le p\le q$, $1\le \theta \le \sigma$.

In what follows, we suppose that $n > m^{2/q}k^{2/\sigma}$.

{\bf Case 5a)} (case 5b) is similar). 

We prove the estimate
\begin{align}
\label{dn_s_l_2} d_n(M, \, l^{m,k}_{q,\sigma}) \underset{q,\sigma}{\gtrsim} \nu_\alpha^{1- \tilde \lambda_{\alpha,\beta}} \nu_\beta^{\tilde \lambda _{\alpha,\beta}} n^{-1/2}m^{1/q}k^{1/\sigma}.
\end{align}

Let the number $r_{\alpha,\beta}$ be defined by the equation
\begin{align}
\label{2_r_ab_def} \frac{\nu_\alpha}{\nu_\beta} = r_{\alpha,\beta}^{1/p_\alpha-1/p_\beta}.
\end{align}
We show that
\begin{align}
\label{r_ineq_gn} 1\le r_{\alpha,\beta} \le m, \quad n\ge m^{\frac 2q} k^{\frac{2}{\sigma}}r_{\alpha,\beta} ^{1-\frac 2q}. 
\end{align}

Let
$$\lambda := \min\left\{ \mu \in [0, \, \tilde \lambda _{\alpha,\beta}]:\; \frac{1-\mu}{p_\alpha} + \frac{\mu}{p_\beta} \ge \frac 1q, \; \frac{1-\mu}{\theta_\alpha} + \frac{\mu}{\theta_\beta} \ge \frac 12\right\},$$ 
$$
\tilde \lambda:= \max \left\{\mu \in [\tilde \lambda _{\alpha,\beta}, \, 1]: \;  \frac{1-\mu}{\theta_\alpha} + \frac{\mu}{\theta_\beta} \ge \frac 12\right\}. 
$$

If $m^{2/q}k^{2/\sigma}< n \le mk^{2/\sigma}$, then (see \eqref{phi4}, \eqref{phi6})
$$
\nu_\alpha^{1-\tilde \lambda_{\alpha,\beta}} \nu_\beta ^{\tilde \lambda_{\alpha,\beta}} n^{-1/2} m^{1/q} k^{1/\sigma} \le \nu_\alpha^{1-\lambda} \nu_\beta^{\lambda} (n^{-1/2} m^{1/q} k^{1/\sigma}) ^{\frac{(1-\lambda)/p_\alpha +\lambda/p_\beta-1/q}{1/2-1/q}},
$$
$$
\nu_\alpha^{1-\tilde \lambda_{\alpha,\beta}} \nu_\beta ^{\tilde \lambda_{\alpha,\beta}} n^{-1/2} m^{1/q} k^{1/\sigma} \le \nu_\alpha^{1-\tilde\lambda} \nu_\beta^{\tilde\lambda} n^{-1/2} m^{1/q} k^{1/\sigma}.
$$
Hence
$$
(n^{1/2} m^{-1/q} k^{-1/\sigma})^{\frac{1/p_\alpha-1/p_\beta}{1/2-1/q}}\le\frac{\nu_\alpha}{\nu_\beta} \le 1.
$$
This together with (\ref{2_r_ab_def}) and the condition $p_\alpha>p_\beta$ yields that $$1\le r_{\alpha,\beta} \le \left(n^{1/2}m^{-1/q}k^{-1/\sigma}\right)^{\frac{1}{1/2-1/q}};$$ this implies (\ref{r_ineq_gn}).

If $n> mk^{2/\sigma}$, then (see (\ref{phi4}), (\ref{phi6}))
$$
\nu_\alpha^{1-\tilde \lambda_{\alpha,\beta}} \nu_\beta ^{\tilde \lambda_{\alpha,\beta}} n^{-1/2} m^{1/q} k^{1/\sigma} \le \nu_\alpha^{1-\lambda} \nu_\beta^{\lambda} m^{\frac{1}{q}-\frac{1-\lambda}{p_\alpha} -\frac{\lambda}{p_\beta}}n^{-\frac 12} m^{\frac 12} k^{\frac{1}{\sigma}},
$$
$$
\nu_\alpha^{1-\tilde \lambda_{\alpha,\beta}} \nu_\beta ^{\tilde \lambda_{\alpha,\beta}} n^{-1/2} m^{1/q} k^{1/\sigma} \le \nu_\alpha^{1-\tilde\lambda} \nu_\beta^{\tilde\lambda} n^{-1/2} m^{1/q} k^{1/\sigma}.
$$
Hence, $m^{1/p_\alpha-1/p_\beta}\le \frac{\nu_\alpha}{\nu_\beta} \le 1$; therefore, $1\le r_{\alpha,\beta} \le m$. Taking into account the condition $n\ge mk^{2/\sigma}$, we get the second inequality of (\ref{r_ineq_gn}).

Let $r=\lfloor r_{\alpha,\beta}\rfloor$, $W = \nu_\alpha^{1-\tilde \lambda_{\alpha,\beta}} \nu_\beta ^{\tilde \lambda_{\alpha,\beta}} r^{-1/2} V^{m,k}_{r,1}$. We show that $W\subset 2M$. Then
$$
d_n(M, \, l_{q,\sigma}^{m,k}) \gtrsim d_n(W, \, l_{q,\sigma}^{m,k}) \stackrel{(\ref{dn_vmk1}), (\ref{r_ineq_gn})}{\underset{q,\sigma}{\gtrsim}} \nu_\alpha^{1-\tilde \lambda_{\alpha,\beta}} \nu_\beta ^{\tilde \lambda_{\alpha,\beta}} r^{-1/2} n^{-1/2}m^{1/q} k^{1/\sigma} r^{1/2}=
$$
$$
=\nu_\alpha^{1-\tilde \lambda_{\alpha,\beta}} \nu_\beta ^{\tilde \lambda_{\alpha,\beta}} n^{-1/2}m^{1/q} k^{1/\sigma};
$$
i.e., (\ref{dn_s_l_2}) holds. It suffices to check that
\begin{align}
\label{incl_2_ineq} \nu_\alpha^{1-\tilde \lambda_{\alpha,\beta}} \nu_\beta ^{\tilde \lambda_{\alpha,\beta}} r_{\alpha,\beta}^{1/p_\gamma-1/2} \le \nu_\gamma, \quad \gamma \in A.
\end{align}
If $\gamma=\alpha$ or $\gamma=\beta$, it follows from (\ref{2_r_ab_def}).

Let $\gamma \notin \{\alpha, \, \beta\}$. We define the numbers $r_{\alpha,\gamma}$ and $r_{\gamma,\beta}$ by the equations
$$
\frac{\nu_\alpha}{\nu_\gamma} = r_{\alpha,\gamma} ^{1/p_\alpha-1/p_\gamma}, \quad \frac{\nu_\gamma}{\nu_\beta} = r_{\gamma,\beta}^{1/p_\gamma-1/p_\beta}.
$$

Then (\ref{incl_2_ineq}) is equivalent to each of the following inequalities:
\begin{align}
\label{r_ab_bg} r_{\alpha,\beta}^{1/p_\gamma-1/p_\alpha} \le r_{\alpha,\gamma}^{1/p_\gamma -1/p_\alpha}, \quad r_{\alpha,\beta}^{1/p_\gamma-1/p_\beta} \le r_{\gamma,\beta}^{1/p_\gamma -1/p_\beta}.
\end{align}

Let $p_\gamma<2$. We check the first inequality of (\ref{r_ab_bg}). Since $p_\alpha>2>p_\gamma$, it is equivalent to the condition $r_{\alpha,\beta}\le r_{\alpha,\gamma}$. If $\tilde \theta_{\alpha,\gamma}<2$, then from the relations $\nu_\alpha^{1-\tilde \lambda_{\alpha,\beta}} \nu_\beta^{\tilde \lambda_{\alpha,\beta}}\Phi(2, \, \tilde \theta_{\alpha,\beta}) =\Psi\le \Psi_3\le \nu_\alpha^{1-\tilde \lambda_{\alpha,\gamma}} \nu_\gamma^{\tilde \lambda_{\alpha,\gamma}}\Phi(2, \, \tilde \theta_{\alpha,\gamma})$ we get
$$
\nu_\alpha^{1-\tilde \lambda_{\alpha,\beta}} \nu_\beta^{\tilde \lambda_{\alpha,\beta}}n^{-1/2}m^{1/q}k^{1/\sigma} \le \nu_\alpha^{1-\tilde \lambda_{\alpha,\gamma}} \nu_\gamma^{\tilde \lambda_{\alpha,\gamma}}n^{-1/2}m^{1/q}k^{1/\sigma}.
$$
It is equivalent to the inequality 
\begin{align}
\label{na_mk_s}
r_{\alpha,\beta}^{\tilde \lambda_{\alpha,\beta}(1/p_\beta-1/p_\alpha)} \le r_{\alpha,\gamma}^{\tilde \lambda_{\alpha,\gamma}(1/p_\gamma-1/p_\alpha)};
\end{align}
 i.e., $r_{\alpha,\beta}^{1/2-1/p_\alpha}\le r_{\alpha,\gamma}^{1/2-1/p_\alpha}$. Since $p_\alpha>2$, we get $r_{\alpha,\beta}\le r_{\alpha,\gamma}$.

Let $\tilde\theta_{\alpha,\gamma}<2$. We define $\mu\in (0, \, 1)$ by the equation $\frac 12 = \frac{1-\mu}{\tilde \theta_{\alpha,\beta}} + \frac{\mu}{\tilde \theta_{\alpha,\gamma}}$. From the relations $\nu_\alpha^{1-\tilde \lambda_{\alpha,\beta}} \nu_\beta^{\tilde \lambda_{\alpha,\beta}} \Phi(2, \, \tilde \theta_{\alpha,\beta})=\Psi\le \Psi_7 \le (\nu_\alpha^{1-\tilde \lambda_{\alpha,\beta}} \nu_\beta^{\tilde \lambda_{\alpha,\beta}})^{1-\mu}(\nu_\alpha^{1-\tilde \lambda_{\alpha,\gamma}} \nu_\gamma^{\tilde \lambda_{\alpha,\gamma}})^\mu \Phi(2, \, 2)$ we again get (\ref{na_mk_s}).

Let $p_\gamma>2$. We check the second inequality of (\ref{r_ab_bg}). Since $p_\gamma>2>p_\beta$, it is equivalent to $r_{\alpha,\beta}\ge r_{\gamma,\beta}$. Arguing as for $p_\gamma<2$, we get the inequality $r_{\alpha,\beta}^{1/2-1/p_\beta}\le r_{\gamma,\beta}^{1/2-1/p_\beta}$. It remain to apply the inequality $p_\beta<2$.

{\bf Case 6a)} (case 6b) is similar). We have $q>2$, $\sigma>2$.

We show that if $m^{2/q}k^{2/\sigma} < n \le m^{2/q}k$, then $\Psi = \Psi_0$, $\Psi=\Psi_2$ or $\Psi = \Psi_4=\nu_\alpha^{1-\tilde \mu_{\alpha,\beta}} \nu_\beta^{\tilde \mu_{\alpha,\beta}} \Phi(\tilde p_{\alpha,\beta}, 2)$ with $\tilde p_{\alpha,\beta}<2$, and the estimating of the width can be reduced to one of the cases, which is already considered.

We set $$\lambda = \min \left\{ \mu \in [0, \, \tilde \lambda_{\alpha,\beta}]: \, \frac{1-\mu}{\theta_\alpha} + \frac{\mu}{\theta_\beta} \ge \frac{1}{\sigma}, \right. $$$$\left. \frac{(1-\mu)/p_\alpha +\mu/p_\beta-1/q}{1/2-1/q}\ge \frac{(1-\mu)/\theta_\alpha +\mu/\theta_\beta-1/\sigma}{1/2-1/\sigma}\right\},$$
$$
\tilde \lambda = \max \left\{ \mu \in [\tilde \lambda_{\alpha,\beta}, \, 1]:\; \frac{1-\mu}{\theta_\alpha} + \frac{\mu}{\theta_\beta} \in [1/\sigma, \, 1/2]\right\};
$$
the numbers $p_*$, $\theta_*$, $p_{**}$, $\theta_{**}$ are defined by (\ref{p_st1}), (\ref{p_st2}).
We get the inequalities (\ref{nu_a_phi_tl}), i.e.,
$$
\nu_\alpha^{1-\tilde \lambda_{\alpha,\beta}} \nu_\beta^{\tilde \lambda_{\alpha,\beta}} (n^{-1/2}m^{1/q}k^{1/\sigma})^{\frac{(1-\tilde \lambda_{\alpha,\beta})/\theta_\alpha+ \tilde \lambda_{\alpha,\beta}/\theta_\beta -1/\sigma}{1/2-1/\sigma}}\le
$$
$$
\le \nu_\alpha^{1-\lambda} \nu_\beta^{\lambda} (n^{-1/2}m^{1/q}k^{1/\sigma})^{\frac{(1- \lambda)/\theta_\alpha+ \lambda/\theta_\beta -1/\sigma}{1/2-1/\sigma}},
$$
$$
\nu_\alpha^{1-\tilde \lambda_{\alpha,\beta}} \nu_\beta^{\tilde \lambda_{\alpha,\beta}} (n^{-1/2}m^{1/q}k^{1/\sigma})^{\frac{(1-\tilde \lambda_{\alpha,\beta})/\theta_\alpha+ \tilde \lambda_{\alpha,\beta}/\theta_\beta -1/\sigma}{1/2-1/\sigma}}\le
$$
$$
\le \nu_\alpha^{1-\tilde\lambda} \nu_\beta^{\tilde\lambda} (n^{-1/2}m^{1/q}k^{1/\sigma})^{\frac{(1- \tilde\lambda)/\theta_\alpha+ \tilde\lambda/\theta_\beta -1/\sigma}{1/2-1/\sigma}}
$$
(see (\ref{phi5})). Hence, $\frac{\nu_\alpha}{\nu_\beta} = (n^{1/2}m^{-1/q} k^{-1/\sigma})^{\frac{1/\theta_\alpha -1/\theta_\beta}{1/2-1/\sigma}}$, and we get $\Psi = \nu_\alpha^{1-\tilde \lambda} \nu_\beta^{\tilde \lambda} \Phi(p_{**}, \, \theta_{**})$. If $\theta_{**}\in (2, \, \sigma)$, then $\Psi = \Psi_0$; if $\theta_{**}=\sigma$, then $\Psi=\Psi_2$. If $\theta_{**}=2$, then $\Psi=\nu_\alpha^{1-\tilde \mu_{\alpha,\beta}} \nu_\beta^{\tilde \mu_{\alpha,\beta}} \Phi(\tilde p_{\alpha,\beta}, 2)$, where $\tilde p_{\alpha,\beta}<2$.

Let now $n> m^{2/q}k$. We prove that (see \eqref{phi5})
\begin{align}
\label{dn_nm2qk} d_n(M, \, l_{q,\sigma}^{m,k}) \underset{q,\sigma}{\gtrsim} \nu_\alpha^{1-\tilde\lambda_{\alpha,\beta}} \nu_\beta^{\tilde\lambda_{\alpha,\beta}} k^{1/\sigma-1/\tilde \theta_{\alpha,\beta}} n^{-1/2}m^{1/q}k^{1/2}.
\end{align}
Let the number $r_{\alpha,\beta}$ be defined by the equation
\begin{align}
\label{r_a_b_2s_d} \frac{\nu_\alpha}{\nu_\beta} = r_{\alpha,\beta}^{1/p_\alpha-1/p_\beta} k^{1/\theta_\alpha-1/\theta_\beta}.
\end{align}
We show that
\begin{align}
\label{r_est_2s} 1\le r_{\alpha,\beta} \le m, \quad n \ge m^{\frac 2q} k^{\frac{2}{\sigma}} r_{\alpha,\beta}^{1-\frac 2q} k^{1-\frac{2}{\sigma}}.
\end{align}
The numbers $\lambda$, $\tilde \lambda$ are defined as for $m^{2/q}k^{2/\sigma}<n\le m^{2/q}k$. We get
$$
\nu_\alpha^{1-\tilde\lambda_{\alpha,\beta}} \nu_\beta^{\tilde\lambda_{\alpha,\beta}} k^{1/\sigma-1/\tilde \theta_{\alpha,\beta}} n^{-1/2}m^{1/q}k^{1/2} \le $$$$\le\nu_\alpha^{1-\lambda} \nu_\beta^{\lambda} k^{1/\sigma-(1-\lambda)/\theta_\alpha -\lambda/\theta_\beta} (n^{-1/2}m^{1/q}k^{1/2})^{\frac{(1-\lambda)/p_\alpha+\lambda/p_\beta - 1/q}{1/2-1/q}},
$$
$$
\nu_\alpha^{1-\tilde\lambda_{\alpha,\beta}} \nu_\beta^{\tilde\lambda_{\alpha,\beta}} k^{1/\sigma-1/\tilde \theta_{\alpha,\beta}} n^{-1/2}m^{1/q}k^{1/2} \le \nu_\alpha^{1-\tilde\lambda} \nu_\beta^{\tilde\lambda} k^{1/\sigma-(1-\tilde \lambda)/\theta_{\alpha} -\tilde \lambda/\theta_\beta} n^{-1/2}m^{1/q}k^{1/2}.
$$
Hence,
$$
(n^{1/2}m^{-1/q}k^{-1/2}) ^{\frac{1/p_\alpha-1/p_\beta}{1/2-1/q}}k^{1/\theta_\alpha-1/\theta_\beta} \le \frac{\nu_\alpha}{\nu_\beta} \le k^{1/\theta_\alpha-1/\theta_\beta}.
$$
This together with (\ref{r_a_b_2s_d}) yields that $1\le r_{\alpha,\beta} \le (n^{1/2}m^{-1/q}k^{-1/2}) ^{\frac{1}{1/2-1/q}}$; hence, (\ref{r_est_2s}) holds.

Let $r = \lfloor r_{\alpha,\beta} \rfloor$, $W = \nu_\alpha^{1-\tilde\lambda_{\alpha,\beta}} \nu_\beta^{\tilde\lambda_{\alpha,\beta}}r^{-1/2} k^{-1/\tilde \theta_{\alpha,\beta}} V^{m,k}_{r,k}$. We show that $W \subset 2M$. Then
$$
d_n(M, \, l_{q,\sigma}^{m,k}) \gtrsim d_n(W, \, l_{q,\sigma}^{m,k}) \stackrel{(\ref{dn_vmk1}), (\ref{r_est_2s})}{\underset{q,\sigma}{\gtrsim}} \nu_\alpha^{1-\tilde\lambda_{\alpha,\beta}} \nu_\beta^{\tilde\lambda_{\alpha,\beta}}r^{-1/2} k^{-1/\tilde \theta_{\alpha,\beta}} n^{-1/2}m^{1/q}k^{1/\sigma} r^{1/2}k^{1/2}=
$$
$$
=\nu_\alpha^{1-\tilde\lambda_{\alpha,\beta}} \nu_\beta^{\tilde\lambda_{\alpha,\beta}}k^{1/\sigma-1/\tilde \theta_{\alpha,\beta}} n^{-1/2}m^{1/q} k^{1/2};
$$
hence, (\ref{dn_nm2qk}) holds.

In order to prove the inclusion, it suffices to check that
\begin{align}
\label{nu_a_g_2s} \nu_\alpha^{1-\tilde\lambda_{\alpha,\beta}} \nu_\beta^{\tilde\lambda_{\alpha,\beta}}r_{\alpha,\beta}^{1/p_\gamma-1/2} k^{1/\theta_\gamma-1/\tilde \theta_{\alpha,\beta}}\le \nu_\gamma, \quad \gamma \in A.
\end{align}

If $\gamma=\alpha$ or $\gamma=\beta$, it follows from (\ref{r_a_b_2s_d}). 

Let $\gamma \notin \{\alpha,\beta\}$. We define the numbers $r_{\alpha,\gamma}$ and $r_{\gamma,\beta}$ by the equations
$$
\frac{\nu_\alpha}{\nu_\gamma} =r_{\alpha,\gamma}^{1/p_\alpha-1/p_\gamma}k^{1/\theta_\alpha-1/\theta_\gamma}, \quad \frac{\nu_\gamma}{\nu_\beta} = r_{\gamma,\beta}^{1/p_\gamma-1/p_\beta} k^{1/\theta_\gamma -1/\theta_\beta}.
$$
Then (\ref{nu_a_g_2s}) is equivalent to each of the following inequalities:
\begin{align}
\label{2s_r_abg} r_{\alpha,\beta}^{1/p_\gamma-1/p_\alpha} \le r_{\alpha,\gamma}^{1/p_\gamma-1/p_\alpha}, \quad r_{\alpha,\beta}^{1/p_\gamma-1/p_\beta} \le r_{\gamma,\beta}^{1/p_\gamma-1/p_\beta}.
\end{align}

Let $p_\gamma<2$. We prove that the first inequality of (\ref{2s_r_abg}) holds.

Let $\tilde \theta_{\alpha,\gamma}>2$. From the inequality $\nu_\alpha ^{1-\tilde \lambda_{\alpha,\beta}} \nu_\beta^{\tilde \lambda_{\alpha,\beta}} \Phi(2, \, \tilde \theta_{\alpha,\beta}) \le \nu_\alpha ^{1-\tilde \lambda_{\alpha,\gamma}} \nu_\beta^{\tilde \lambda_{\alpha,\gamma}} \Phi(2, \, \tilde \theta_{\alpha,\gamma})$ it follows that (see (\ref{phi3}), (\ref{phi5}))
\begin{align}
\label{nu_a_12m1q}
\nu_\alpha ^{1-\tilde \lambda_{\alpha,\beta}} \nu_\beta^{\tilde \lambda_{\alpha,\beta}} k^{1/\sigma-1/\tilde \theta_{\alpha,\beta}}n^{-1/2}m^{1/q}k^{1/2} \le \nu_\alpha ^{1-\tilde \lambda_{\alpha,\gamma}} \nu_\beta^{\tilde \lambda_{\alpha,\gamma}} k^{1/\sigma-1/\tilde \theta_{\alpha,\gamma}}n^{-1/2}m^{1/q}k^{1/2}.
\end{align}
Hence, $r_{\alpha,\beta}^{1/2-1/p_\alpha} \le r_{\alpha,\gamma}^{1/2-1/p_\alpha}$. Since $p_\alpha>2>p_\gamma$, we get the first inequality of (\ref{2s_r_abg}).

Let $\tilde \theta_{\alpha,\gamma}<2$. Then there is $\mu \in (0, \, 1)$ such that $\frac 12 = \frac{1-\mu}{\tilde \theta_{\alpha,\beta}} + \frac{\mu}{\tilde \theta_{\alpha,\gamma}}$. From the inequality
$\nu_\alpha ^{1-\tilde \lambda_{\alpha,\beta}} \nu_\beta^{\tilde \lambda_{\alpha,\beta}} \Phi(2, \, \tilde \theta_{\alpha,\beta}) \le (\nu_\alpha ^{1-\tilde \lambda_{\alpha,\beta}} \nu_\beta^{\tilde \lambda_{\alpha,\beta}})^{1-\mu} (\nu_\alpha ^{1-\tilde \lambda_{\alpha,\gamma}} \nu_\gamma^{\tilde \lambda_{\alpha,\gamma}})^\mu \Phi(2, \, 2)$ we get (\ref{nu_a_12m1q}), which again implies the first inequality of (\ref{2s_r_abg}).

If $p_\gamma>2$, then we similarly get the second inequality of (\ref{2s_r_abg}).

\subsection{The case $\Psi = \Psi_5$}

From (\ref{n5}), (\ref{psi5}) it follows that $q>2$, $\sigma>2$.

For $n\le m^{2/q}k^{2/\sigma}$, similarly as in the previous subsection, we prove that $\nu_\alpha = \nu_\beta$ and $\Psi=\Psi_0$, $\Psi = \Psi_1$ or $\Psi=\Psi_2$.

Let $n> m^{2/q}k^{2/\sigma}$. If $m=1$ or $k=1$, then, respectively, $\Phi(p, \, \theta) = (n^{-1/2}k^{1/\sigma})^{\frac{1/\theta-1/\sigma}{1/2-1/\sigma}}$ or $\Phi(p, \, \theta) = (n^{-1/2}m^{1/q})^{\frac{1/p-1/q}{1/2-1/q}}$ for $2\le p\le q$, $2\le \theta \le \sigma$. This implies that $\Psi= \Psi_j$ for some $j\in \{0, \, \dots, \, 4\}$; i.e., we get the case which was already considered (the arguments are as in the previous subsection).

Let now $m\ge 2$, $k\ge 2$. We have 
\begin{align}
\label{psi_5_psi_eq}
\begin{array}{c}
\Psi = \nu_{\alpha}^{1-\lambda_{\alpha,\beta}} \nu_\beta^{\lambda_{\alpha,\beta}}(n^{-1/2}m^{1/q}k^{1/\sigma})^{\frac{1/p_{\alpha,\beta}-1/q}{1/2-1/q}} = \nu_{\alpha}^{1-\lambda_{\alpha,\beta}} \nu_\beta^{\lambda_{\alpha,\beta}}(n^{-1/2}m^{1/q}k^{1/\sigma})^{\frac{1/\theta_{\alpha,\beta}-1/\sigma}{1/2-1/\sigma}}=\\
=\nu_{\alpha}^{1-\lambda_{\alpha,\beta}} \nu_\beta^{\lambda_{\alpha,\beta}}k^{1/\sigma-1/\theta_{\alpha,\beta}}(n^{-1/2}m^{1/q}k^{1/2})^{\frac{1/p_{\alpha,\beta}-1/q}{1/2-1/q}} = \\=\nu_{\alpha}^{1-\lambda_{\alpha,\beta}} \nu_\beta^{\lambda_{\alpha,\beta}}m^{1/q-1/p_{\alpha,\beta}}(n^{-1/2}m^{1/2}k^{1/\sigma})^{\frac{1/\theta_{\alpha,\beta}-1/\sigma}{1/2-1/\sigma}}.
\end{array}
\end{align}
Without lost of generality,
\begin{align}
\label{paq_tas} \frac{1/p_\alpha-1/q}{1/2-1/q} > \frac{1/\theta_\alpha-1/\sigma}{1/2-1/\sigma}, \quad \frac{1/p_\beta-1/q}{1/2-1/q} < \frac{1/\theta_\beta-1/\sigma}{1/2-1/\sigma}.
\end{align}
This implies that
\begin{align}
\label{paq_tas1} \frac{1/p_\alpha-1/p_\beta}{1/2-1/q} > \frac{1/\theta_\alpha-1/\theta_\beta}{1/2-1/\sigma}.
\end{align}

We define the numbers $r_{\alpha,\beta}$, $l_{\alpha,\beta}$ by the equation 
\begin{align}
\label{nua_nub_rl} \frac{\nu_\alpha}{\nu_\beta} = r_{\alpha,\beta}^{1/p_\alpha-1/p_\beta} l_{\alpha,\beta} ^{1/\theta_\alpha-1/\theta_\beta};
\end{align}
here 
\begin{align}
\label{rl1} r_{\alpha,\beta} = (n^{1/2} m^{-1/q} k^{-1/\sigma}) ^{\frac{1-\tau}{1/2-1/q}}, \quad l_{\alpha,\beta} = (n^{1/2}m^{-1/q} k^{-1/\sigma}) ^{\frac{\tau}{1/2-1/\sigma}}
\end{align}
for $m^{2/q}k^{2/\sigma} < n\le \min\{m^{2/q}k, \, mk^{2/\sigma}\}$;
\begin{align}
\label{rl2} r_{\alpha,\beta} = m^\tau, \quad l_{\alpha,\beta} = (n^{1/2}m^{-1/q} k^{-1/\sigma}) ^{\frac{1-\tau}{1/2-1/\sigma}}(n^{1/2}m^{-1/2} k^{-1/\sigma}) ^{\frac{\tau}{1/2-1/\sigma}}
\end{align}
for $mk^{2/\sigma}< n\le m^{2/q}k$;
\begin{align}
\label{rl3} r_{\alpha,\beta} = (n^{1/2}m^{-1/q} k^{-1/\sigma}) ^{\frac{1-\tau}{1/2-1/q}}(n^{1/2}m^{-1/q} k^{-1/2}) ^{\frac{\tau}{1/2-1/q}}, \quad l_{\alpha,\beta} = k^{\tau},
\end{align}
for $m^{2/q}k< n\le mk^{2/\sigma}$;
\begin{align}
\label{rl4} r_{\alpha,\beta} = m^\tau(n^{1/2}m^{-1/q} k^{-1/2}) ^{\frac{1-\tau}{1/2-1/q}}, \quad l_{\alpha,\beta} = k^{1-\tau}(n^{1/2}m^{-1/2}k^{-1/\sigma}) ^{\frac{\tau}{1/2-1/\sigma}}
\end{align}
for $\max\{mk^{2/\sigma}, \, m^{2/q}k\}< n \le mk/2$.
From (\ref{paq_tas1}) and the conditions $m^{2/q}k^{2/\sigma}< n< mk$, $m\ge 2$, $k\ge 2$ it follows that such number $\tau$ is well-defined. We show that $\tau\in [0, \, 1]$. Then
\begin{align}
\label{r1m_l1k_ab} 1\le r_{\alpha,\beta}\le m, \quad 1\le l_{\alpha,\beta} \le k, \quad n\le m^{\frac 2q}k^{\frac{2}{\sigma}} r_{\alpha,\beta}^{1-\frac 2q} l_{\alpha,\beta} ^{1-\frac{2}{\sigma}}.
\end{align}

We set
$$
\lambda = \min \left\{ \mu \in [0, \, \lambda_{\alpha,\beta}]:\; \frac{1-\mu}{p_\alpha}+\frac{\mu}{p_\beta}\in [1/q, \, 1/2], \; \frac{1-\mu}{\theta_\alpha}+\frac{\mu}{\theta_\beta}\in [1/\sigma, \, 1/2]\right\},
$$
$$
\tilde\lambda = \max \left\{ \mu \in [\lambda_{\alpha,\beta}, \, 1]:\; \frac{1-\mu}{p_\alpha}+\frac{\mu}{p_\beta}\in [1/q, \, 1/2], \; \frac{1-\mu}{\theta_\alpha}+\frac{\mu}{\theta_\beta}\in [1/\sigma, \, 1/2]\right\}.
$$
Then $\lambda<\lambda_{\alpha,\beta}<\tilde \lambda$. The numbers $p_*$, $\theta_*$, $p_{**}$, $\theta_{**}$ are defined by (\ref{p_st1}), (\ref{p_st2}).
We have $$\nu_\alpha^{1-\lambda_{\alpha,\beta}} \nu_\beta^{\lambda_{\alpha,\beta}} \Phi(p_{\alpha,\beta}, \, \theta_{\alpha,\beta}) \le \nu_{\alpha}^{1-\lambda} \nu_\beta^\lambda \Phi(p_*, \, \theta_*), $$$$\nu_\alpha^{1-\lambda_{\alpha,\beta}} \nu_\beta^{\lambda_{\alpha,\beta}} \Phi(p_{\alpha,\beta}, \, \theta_{\alpha,\beta}) \le \nu_{\alpha}^{1-\tilde\lambda} \nu_\beta^{\tilde\lambda} \Phi(p_{**}, \, \theta_{**}).$$
Now we write these inequalities applying (\ref{phi4}), (\ref{phi5}), (\ref{psi_5_psi_eq}) and taking into account that $\omega_{p_*,q}\ge \omega_{\theta_*,\sigma}$, $\omega_{p_{**},q}\le \omega_{\theta_{**},\sigma}$ by (\ref{paq_tas}).

If $m^{2/q}k^{2/\sigma} < n\le \min \{mk^{2/\sigma}, \, m^{2/q}k\}$, then
$$
\nu_\alpha^{1-\lambda_{\alpha,\beta}} \nu_\beta^{\lambda_{\alpha,\beta}} (n^{-1/2}m^{1/q}k^{1/\sigma})^{\frac{(1-\lambda_{\alpha,\beta})/\theta_\alpha + \lambda_{\alpha,\beta}/\theta_\beta -1/\sigma}{1/2-1/\sigma}} \le 
$$
$$
\le\nu_\alpha^{1-\lambda} \nu_\beta^{\lambda} (n^{-1/2}m^{1/q}k^{1/\sigma})^{\frac{(1-\lambda)/\theta_\alpha + \lambda/\theta_\beta -1/\sigma}{1/2-1/\sigma}},
$$
$$
\nu_\alpha^{1-\lambda_{\alpha,\beta}} \nu_\beta^{\lambda_{\alpha,\beta}} (n^{-1/2}m^{1/q}k^{1/\sigma})^{\frac{(1-\lambda_{\alpha,\beta})/p_\alpha + \lambda_{\alpha,\beta}/p_\beta -1/q}{1/2-1/q}} \le 
$$
$$
\le\nu_\alpha^{1-\tilde\lambda} \nu_\beta^{\tilde\lambda} (n^{-1/2}m^{1/q}k^{1/\sigma})^{\frac{(1-\tilde\lambda)/p_\alpha + \tilde\lambda/p_\beta -1/q}{1/2-1/q}}.
$$
Hence,
$$
(n^{1/2}m^{-1/q}k^{-1/\sigma})^{\frac{1/\theta_\alpha-1/\theta_\beta}{1/2-1/\sigma}}\le \frac{\nu_\alpha}{\nu_\beta} \le (n^{1/2}m^{-1/q}k^{-1/\sigma})^{\frac{1/p_\alpha-1/p_\beta}{1/2-1/q}}.
$$
This together with (\ref{nua_nub_rl}), (\ref{rl1}) yields that $0\le \tau \le 1$.

If $mk^{2/\sigma}< n\le m^{2/q}k$, then
$$
\nu_\alpha^{1-\lambda_{\alpha,\beta}} \nu_\beta^{\lambda_{\alpha,\beta}} (n^{-1/2}m^{1/q}k^{1/\sigma})^{\frac{(1-\lambda_{\alpha,\beta})/\theta_\alpha + \lambda_{\alpha,\beta}/\theta_\beta -1/\sigma}{1/2-1/\sigma}} \le 
$$
$$
\le \nu_\alpha^{1-\lambda} \nu_\beta^{\lambda} (n^{-1/2}m^{1/q}k^{1/\sigma})^{\frac{(1-\lambda)/\theta_\alpha + \lambda/\theta_\beta -1/\sigma}{1/2-1/\sigma}},
$$
$$
\nu_\alpha^{1-\lambda_{\alpha,\beta}} \nu_\beta^{\lambda_{\alpha,\beta}} m^{1/q-(1-\lambda_{\alpha,\beta})/p_\alpha -\lambda_{\alpha,\beta}/p_\beta}(n^{-1/2}m^{1/2}k^{1/\sigma})^{\frac{(1-\lambda_{\alpha,\beta})/\theta_\alpha + \lambda_{\alpha,\beta}/\theta_\beta -1/\sigma}{1/2-1/\sigma}} \le 
$$
$$
\le\nu_\alpha^{1-\tilde\lambda} \nu_\beta^{\tilde\lambda} m^{1/q-(1-\tilde\lambda)/p_\alpha -\tilde\lambda/p_\beta}(n^{-1/2}m^{1/2}k^{1/\sigma})^{\frac{(1-\tilde\lambda)/\theta_\alpha + \tilde\lambda/\theta_\beta -1/\sigma}{1/2-1/\sigma}}.
$$
Hence,
$$
(n^{1/2}m^{-1/q}k^{-1/\sigma})^{\frac{1/\theta_\alpha-1/\theta_\beta}{1/2-1/\sigma}}\le \frac{\nu_\alpha}{\nu_\beta} \le m^{1/p_\alpha-1/p_\beta}(n^{1/2}m^{-1/2}k^{-1/\sigma})^{\frac{1/\theta_\alpha-1/\theta_\beta}{1/2-1/\sigma}}.
$$
This together with (\ref{nua_nub_rl}), (\ref{rl2}) implies that $0\le \tau \le 1$.

The case $m^{2/q}k< n\le mk^{2/\sigma}$ is considered similarly, applying (\ref{rl3}).

For $n> \max\{mk^{2/\sigma}, \, m^{2/q}k\}$ we get
$$
\nu_\alpha^{1-\lambda_{\alpha,\beta}} \nu_\beta^{\lambda_{\alpha,\beta}} k^{1/\theta-(1-\lambda_{\alpha,\beta})/\theta_\alpha-\lambda_{\alpha,\beta}/\theta_\beta}(n^{-1/2}m^{1/q}k^{1/2})^{\frac{(1-\lambda_{\alpha,\beta})/p_\alpha + \lambda_{\alpha,\beta}/p_\beta -1/q}{1/2-1/q}} \le 
$$
$$
\le \nu_\alpha^{1-\lambda} \nu_\beta^{\lambda} k^{1/\theta -(1-\lambda)/\theta_\alpha -\lambda/\theta_\beta}(n^{-1/2}m^{1/q}k^{1/2})^{\frac{(1-\lambda)/p_\alpha + \lambda/p_\beta -1/q}{1/2-1/q}},
$$
$$
\nu_\alpha^{1-\lambda_{\alpha,\beta}} \nu_\beta^{\lambda_{\alpha,\beta}} m^{1/q-(1-\lambda_{\alpha,\beta})/p_\alpha -\lambda_{\alpha,\beta}/p_\beta}(n^{-1/2}m^{1/2}k^{1/\sigma})^{\frac{(1-\lambda_{\alpha,\beta})/\theta_\alpha + \lambda_{\alpha,\beta}/\theta_\beta -1/\sigma}{1/2-1/\sigma}} \le 
$$
$$
\le\nu_\alpha^{1-\tilde\lambda} \nu_\beta^{\tilde\lambda} m^{1/q-(1-\tilde\lambda)/p_\alpha -\tilde\lambda/p_\beta}(n^{-1/2}m^{1/2}k^{1/\sigma})^{\frac{(1-\tilde\lambda)/\theta_\alpha + \tilde\lambda/\theta_\beta -1/\sigma}{1/2-1/\sigma}}.
$$
Hence,
$$
k^{1/\theta_\alpha-1/\theta_\beta}(n^{1/2}m^{-1/q}k^{-1/2})^{\frac{1/p_\alpha-1/p_\beta}{1/2-1/q}}\le \frac{\nu_\alpha}{\nu_\beta} \le m^{1/p_\alpha-1/p_\beta}(n^{1/2}m^{-1/2}k^{-1/\sigma})^{\frac{1/\theta_\alpha-1/\theta_\beta}{1/2-1/\sigma}}.
$$
This together with (\ref{nua_nub_rl}), (\ref{rl4}) yields that $0\le \tau \le 1$.

Let $r=\lceil r_{\alpha,\beta}\rceil$, $l = \lceil l_{\alpha,\beta}\rceil$. By (\ref{r1m_l1k_ab}), we have $1\le r\le m$, $1\le l \le k$, $n \le m^{\frac 2q} k^{\frac{2}{\sigma}} r^{1-\frac 2q} l^{1-\frac{2}{\sigma}}$. Let $W= \nu_\alpha^{1-\lambda_{\alpha,\beta}}\nu_\beta^{\lambda_{\alpha,\beta}} r^{-1/p_{\alpha,\beta}}l^{-1/\theta_{\alpha,\beta}} V^{m,k}_{r,l}$. We show that $W \subset 4M$. Then
$$
d_n(M, \, l^{m,k}_{q,\sigma}) \gtrsim d_n(W, \, l^{m,k}_{q,\sigma}) \stackrel{(\ref{dn_vmk1})}{\underset{q,\sigma}{\gtrsim}} \nu_\alpha^{1-\lambda_{\alpha,\beta}}\nu_\beta^{\lambda_{\alpha,\beta}} r^{-1/p_{\alpha,\beta}}l^{-1/\theta_{\alpha,\beta}}r^{1/q} l^{1/\sigma};
$$
this together with (\ref{psi_5_psi_eq}), (\ref{rl1})--(\ref{rl4}) yields the desired estimate for the widths.

In order to prove the inclusion, it suffices to check that
\begin{align}
\label{nu_a_1lab_r_pg}
\nu_\alpha^{1-\lambda_{\alpha,\beta}}\nu_\beta^{\lambda_{\alpha,\beta}} r_{\alpha,\beta}^{1/p_\gamma-1/p_{\alpha,\beta}}l_{\alpha,\beta}^{1/\theta_\gamma-1/\theta_{\alpha,\beta}} \le \nu_\gamma, \, \quad \gamma \in A.
\end{align}

If $\gamma=\alpha$ or $\gamma = \beta$, it follows from (\ref{nua_nub_rl}).

Let $\gamma \notin \{\alpha,\beta\}$. We suppose that 
\begin{align}
\label{pgq2q_tg}
\frac{1/p_\gamma-1/q}{1/2-1/q} < \frac{1/\theta_\gamma-1/\sigma}{1/2-1/\sigma}
\end{align}
(the case $\frac{1/p_\gamma-1/q}{1/2-1/q} > \frac{1/\theta_\gamma-1/\sigma}{1/2-1/\sigma}$ is similar).
Then from (\ref{paq_tas}) it follows that
\begin{align}
\label{1pgpa2q}
\frac{1/p_\gamma-1/p_\alpha}{1/2-1/q} < \frac{1/\theta_\gamma-1/\theta_\alpha}{1/2-1/\sigma}.
\end{align}
We define the numbers $r_{\alpha,\gamma}$ and $l_{\alpha,\gamma}$ by the equation
\begin{align}
\label{nua_nub_rl1} \frac{\nu_\alpha}{\nu_\gamma} = r_{\alpha,\gamma}^{1/p_\alpha-1/p_\gamma} l_{\alpha,\gamma} ^{1/\theta_\alpha-1/\theta_\gamma};
\end{align}
here
\begin{align}
\label{rl11} r_{\alpha,\gamma} = (n^{1/2} m^{-1/q} k^{-1/\sigma}) ^{\frac{1-\tau'}{1/2-1/q}}, \quad l_{\alpha,\gamma} = (n^{1/2}m^{-1/q} k^{-1/\sigma}) ^{\frac{\tau'}{1/2-1/\sigma}}
\end{align}
for $m^{2/q}k^{2/\sigma} < n\le \min\{m^{2/q}k, \, mk^{2/\sigma}\}$;
\begin{align}
\label{rl21} r_{\alpha,\gamma} = m^{\tau'}, \quad l_{\alpha,\gamma} = (n^{1/2}m^{-1/q} k^{-1/\sigma}) ^{\frac{1-\tau'}{1/2-1/\sigma}}(n^{1/2}m^{-1/2} k^{-1/\sigma}) ^{\frac{\tau'}{1/2-1/\sigma}}
\end{align}
for $mk^{2/\sigma}< n\le m^{2/q}k$;
\begin{align}
\label{rl31} r_{\alpha,\gamma} = (n^{1/2}m^{-1/q} k^{-1/\sigma}) ^{\frac{1-\tau'}{1/2-1/q}}(n^{1/2}m^{-1/q} k^{-1/2}) ^{\frac{\tau'}{1/2-1/q}}, \quad l_{\alpha,\gamma} = k^{\tau'}
\end{align}
for $m^{2/q}k< n\le mk^{2/\sigma}$;
\begin{align}
\label{rl41} r_{\alpha,\gamma} = m^{\tau'}(n^{1/2}m^{-1/q} k^{-1/2}) ^{\frac{1-\tau'}{1/2-1/q}}, \quad l_{\alpha,\gamma} = k^{1-\tau'}(n^{1/2}m^{-1/2}k^{-1/\sigma}) ^{\frac{\tau'}{1/2-1/\sigma}}
\end{align}
for $\max\{mk^{2/\sigma}, \, m^{2/q}k\}< n \le \frac{mk}{2}$.
Such number $\tau'$ is well-defined, by (\ref{1pgpa2q}) and the conditions $m\ge 2$, $k\ge 2$, $m^{2/q}k^{2/\sigma}< n < mk$.

From (\ref{nua_nub_rl}), (\ref{nua_nub_rl1}) it follows that (\ref{nu_a_1lab_r_pg}) is equivalent to 
\begin{align}
\label{r_ab_1pgpa} r_{\alpha,\beta}^{1/p_\gamma-1/p_\alpha} l_{\alpha,\beta} ^{1/\theta_\gamma-1/\theta_\alpha} \le r_{\alpha,\gamma}^{1/p_\gamma-1/p_\alpha} l_{\alpha,\gamma}^{1/\theta_\gamma-1/\theta_\alpha}.
\end{align}

By (\ref{paq_tas}) and (\ref{pgq2q_tg}), there is a number $\lambda_{\alpha,\gamma}\in (0, \, 1)$ such that $\frac{1/p_{\alpha,\gamma}-1/q}{1/2-1/q} = \frac{1/\theta_{\alpha,\gamma}-1/\sigma}{1/2-1/\sigma}$, where $\frac{1}{p_{\alpha,\gamma}} = \frac{1-\lambda_{\alpha,\gamma}}{p_\alpha} + \frac{\lambda_{\alpha,\gamma}}{p_\gamma}$, $\frac{1}{\theta_{\alpha,\gamma}} = \frac{1-\lambda_{\alpha,\gamma}}{\theta_\alpha} + \frac{\lambda_{\alpha,\gamma}}{\theta_\gamma}$.

If $2\le p_{\alpha,\gamma}\le q$, then
\begin{align}
\label{phi_compar}
\nu_{\alpha}^{1-\lambda_{\alpha,\beta}} \nu_\beta^{\lambda_{\alpha,\beta}} \Phi(p_{\alpha,\beta}, \, \theta_{\alpha,\beta})= \Psi = \Psi_5 \le \nu_{\alpha}^{1-\lambda_{\alpha,\gamma}} \nu_\gamma^{\lambda_{\alpha,\gamma}} \Phi(p_{\alpha,\gamma}, \, \theta_{\alpha,\gamma}).
\end{align}
Let $p_{\alpha,\gamma}<2$. Then there exists a number $\mu\in (0, \, 1)$ such that $\frac{1}{2} = \frac{1-\mu}{p_{\alpha,\beta}} + \frac{\mu}{p_{\alpha,\gamma}}$, $\frac{1}{2} = \frac{1-\mu}{\theta_{\alpha,\beta}} + \frac{\mu}{\theta_{\alpha,\gamma}}$. In this case,
\begin{align}
\label{phi_compar1}
\nu_{\alpha}^{1-\lambda_{\alpha,\beta}} \nu_\beta^{\lambda_{\alpha,\beta}} \Phi(p_{\alpha,\beta}, \, \theta_{\alpha,\beta})=\Psi\le \Psi_7 \le (\nu_{\alpha}^{1-\lambda_{\alpha,\beta}} \nu_\beta^{\lambda_{\alpha,\beta}})^{1-\mu}(\nu_{\alpha}^{1-\lambda_{\alpha,\gamma}} \nu_\gamma^{\lambda_{\alpha,\gamma}})^{\mu} \Phi(2, \, 2).
\end{align}
Let $p_{\alpha,\gamma}>q$. Then there is a number $\mu\in (0, \, 1)$ such that $\frac{1}{q} = \frac{1-\mu}{p_{\alpha,\beta}} + \frac{\mu}{p_{\alpha,\gamma}}$, $\frac{1}{\sigma} = \frac{1-\mu}{\theta_{\alpha,\beta}} + \frac{\mu}{\theta_{\alpha,\gamma}}$. In this case,
\begin{align}
\label{phi_compar2}
\nu_{\alpha}^{1-\lambda_{\alpha,\beta}} \nu_\beta^{\lambda_{\alpha,\beta}} \Phi(p_{\alpha,\beta}, \, \theta_{\alpha,\beta})=\Psi\le \Psi_6 \le (\nu_{\alpha}^{1-\lambda_{\alpha,\beta}} \nu_\beta^{\lambda_{\alpha,\beta}})^{1-\mu}(\nu_{\alpha}^{1-\lambda_{\alpha,\gamma}} \nu_\gamma^{\lambda_{\alpha,\gamma}})^{\mu} \Phi(q, \, \sigma).
\end{align}

Let $m^{2/q}k^{2/\sigma}< n\le \min \{mk^{2/\sigma}, \, m^{2/q}k\}$. Then (\ref{r_ab_1pgpa}) is equivalent to
$$
(n^{1/2}m^{-1/q}k^{-1/\sigma})^{\frac{(1-\tau)(1/p_\gamma-1/p_\alpha)}{1/2-1/q}}(n^{1/2}m^{-1/q}k^{-1/\sigma})^{\frac{\tau(1/\theta_\gamma-1/\theta_\alpha)}{1/2-1/\sigma}} \le$$$$\le (n^{1/2}m^{-1/q}k^{-1/\sigma})^{\frac{(1-\tau')(1/p_\gamma-1/p_\alpha)}{1/2-1/q}}(n^{1/2}m^{-1/q}k^{-1/\sigma})^{\frac{\tau'(1/\theta_\gamma-1/\theta_\alpha)}{1/2-1/\sigma}},
$$
or
$$
(n^{1/2}m^{-1/q}k^{-1/\sigma})^{\tau\left(\frac{1/p_\alpha-1/p_\gamma}{1/2-1/q} - \frac{1/\theta_\alpha-1/\theta_\gamma}{1/2-1/\sigma}\right)} \le (n^{1/2}m^{-1/q}k^{-1/\sigma})^{\tau'\left(\frac{1/p_\alpha-1/p_\gamma}{1/2-1/q} - \frac{1/\theta_\alpha-1/\theta_\gamma}{1/2-1/\sigma}\right)}.
$$
Since $n> m^{2/q}k^{2/\sigma}$ and (\ref{1pgpa2q}) holds, it is equivalent to
\begin{align}
\label{ttpr} \tau \le \tau'.
\end{align}

In order to prove (\ref{ttpr}), we use (\ref{phi_compar}), (\ref{phi_compar1}) or (\ref{phi_compar2}). The corresponding inequality is as follows:
$$
\nu_\alpha^{1-\lambda_{\alpha,\beta}} \nu_\beta^{\lambda_{\alpha,\beta}} (n^{1/2}m^{-1/q} k^{-1/\sigma})^{\frac{1/q-1/p_{\alpha,\beta}}{1/2-1/q}} \le \nu_\alpha^{1-\lambda_{\alpha,\gamma}} \nu_\gamma^{\lambda_{\alpha,\gamma}} (n^{1/2}m^{-1/q} k^{-1/\sigma})^{\frac{1/q-1/p_{\alpha,\gamma}}{1/2-1/q}}.
$$
By (\ref{nua_nub_rl}), (\ref{rl1}), (\ref{nua_nub_rl1}),  (\ref{rl11}), it is equivalent to
$$
(n^{1/2}m^{-1/q}k^{-1/\sigma})^{-\lambda_{\alpha,\beta}(1-\tau)\frac{1/p_\alpha-1/p_\beta}{1/2-1/q}} (n^{1/2}m^{-1/q}k^{-1/\sigma})^{-\lambda_{\alpha,\beta}\tau\frac{1/\theta_\alpha-1/\theta_\beta}{1/2-1/\sigma}} \times$$$$\times(n^{1/2}m^{-1/q}k^{-1/\sigma})^{\frac{1/q-(1-\lambda_{\alpha,\beta})/p_\alpha -\lambda_{\alpha,\beta}/p_\beta}{1/2-1/q}} \le
$$
$$
\le (n^{1/2}m^{-1/q}k^{-1/\sigma})^{-\lambda_{\alpha,\gamma}(1-\tau')\frac{1/p_\alpha-1/p_\gamma}{1/2-1/q}} (n^{1/2}m^{-1/q}k^{-1/\sigma})^{-\lambda_{\alpha,\gamma}\tau'\frac{1/\theta_\alpha-1/\theta_\gamma}{1/2-1/\sigma}} \times$$$$\times(n^{1/2}m^{-1/q}k^{-1/\sigma})^{\frac{1/q-(1-\lambda_{\alpha,\gamma})/p_\alpha -\lambda_{\alpha,\gamma}/p_\gamma}{1/2-1/q}},
$$
or
$$
(n^{1/2}m^{-1/q}k^{-1/\sigma})^{\tau \lambda_{\alpha,\beta} \left(\frac{1/p_\alpha-1/p_\beta}{1/2-1/q} -\frac{1/\theta_\alpha-1/\theta_\beta}{1/2-1/\sigma}\right)} \le (n^{1/2}m^{-1/q}k^{-1/\sigma})^{\tau' \lambda_{\alpha,\gamma} \left(\frac{1/p_\alpha-1/p_\gamma}{1/2-1/q} -\frac{1/\theta_\alpha-1/\theta_\gamma}{1/2-1/\sigma}\right)}.
$$
Since
\begin{align}
\label{lab_q_lag}
\begin{array}{c}
\lambda_{\alpha,\beta} \left(\frac{1/p_\alpha-1/p_\beta}{1/2-1/q} -\frac{1/\theta_\alpha-1/\theta_\beta}{1/2-1/\sigma}\right) = \lambda_{\alpha,\gamma} \left(\frac{1/p_\alpha-1/p_\gamma}{1/2-1/q} -\frac{1/\theta_\alpha-1/\theta_\gamma}{1/2-1/\sigma}\right) =\\=\frac{1/p_\alpha-1/q}{1/2-1/q} -\frac{1/\theta_\alpha-1/\sigma}{1/2-1/\sigma}\stackrel{(\ref{paq_tas})}{>}0
\end{array}
\end{align}
and $n> m^{2/q}k^{2/\sigma}$, we get (\ref{ttpr}).

Let $mk^{2/\sigma}< n\le m^{2/q}k$. Then (\ref{r_ab_1pgpa}) is equivalent to (see (\ref{rl2}), (\ref{rl21}))
$$
m^{\tau(1/p_\gamma-1/p_\alpha)}(n^{1/2}m^{-1/q}k^{-1/\sigma})^{\frac{(1-\tau)(1/\theta_\gamma-1/\theta_\alpha)}{1/2-1/\sigma}}(n^{1/2}m^{-1/2}k^{-1/\sigma})^{\frac{\tau(1/\theta_\gamma-1/\theta_\alpha)}{1/2-1/\sigma}} \le$$$$\le m^{\tau'(1/p_\gamma-1/p_\alpha)}(n^{1/2}m^{-1/q}k^{-1/\sigma})^{\frac{(1-\tau')(1/\theta_\gamma-1/\theta_\alpha)}{1/2-1/\sigma}}(n^{1/2}m^{-1/2}k^{-1/\sigma})^{\frac{\tau'(1/\theta_\gamma-1/\theta_\alpha)}{1/2-1/\sigma}} ,
$$
or
$$
m^{\tau\left(1/p_\gamma-1/p_\alpha-\frac{(1/\theta_\gamma-1/\theta_\alpha)(1/2-1/q)}{1/2-1/\sigma}\right)} \le m^{\tau'\left(1/p_\gamma-1/p_\alpha-\frac{(1/\theta_\gamma-1/\theta_\alpha)(1/2-1/q)}{1/2-1/\sigma}\right)}.
$$
By (\ref{1pgpa2q}) and the condition $m\ge 2$, it is equivalent to
\begin{align}
\label{tt2} \tau \ge \tau'.
\end{align}
In order to prove (\ref{tt2}), we apply (\ref{phi_compar}), (\ref{phi_compar1}) or (\ref{phi_compar2}). The corresponding inequality is as follows:
$$
\nu_\alpha^{1-\lambda_{\alpha,\beta}} \nu_\beta^{\lambda_{\alpha,\beta}} (n^{1/2}m^{-1/q} k^{-1/\sigma})^{\frac{1/\sigma-1/\theta_{\alpha,\beta}}{1/2-1/\sigma}} \le \nu_\alpha^{1-\lambda_{\alpha,\gamma}} \nu_\gamma^{\lambda_{\alpha,\gamma}} (n^{1/2}m^{-1/q} k^{-1/\sigma})^{\frac{1/\sigma-1/\theta_{\alpha,\gamma}}{1/2-1/\theta}}.
$$
By (\ref{nua_nub_rl}), (\ref{rl2}), (\ref{nua_nub_rl1}),  (\ref{rl21}), it is equivalent to
$$
m^{-\tau(1/p_\alpha-1/p_\beta)\lambda_{\alpha,\beta}}(n^{1/2}m^{-1/q}k^{-1/\sigma})^{-(1-\tau)\frac{(1/\theta_\alpha-1/\theta_\beta)\lambda_{\alpha,\beta}}{1/2-1/\sigma}}\times$$$$\times(n^{1/2}m^{-1/2}k^{-1/\sigma})^{-\tau \frac{(1/\theta_\alpha-1/\theta_\beta)\lambda_{\alpha,\beta}}{1/2-1/\sigma}}(n^{1/2}m^{-1/q} k^{-1/\sigma})^{\frac{1/\sigma-(1-\lambda_{\alpha,\beta})/\theta_{\alpha}-\lambda_{\alpha,\beta} /\theta_\beta}{1/2-1/\sigma}}\le
$$
$$
\le m^{-\tau'(1/p_\alpha-1/p_\gamma)\lambda_{\alpha,\gamma}}(n^{1/2}m^{-1/q}k^{-1/\sigma})^{-(1-\tau')\frac{(1/\theta_\alpha-1/\theta_\gamma)\lambda_{\alpha,\gamma}}{1/2-1/\sigma}}\times$$$$\times(n^{1/2}m^{-1/2}k^{-1/\sigma})^{-\tau' \frac{(1/\theta_\alpha-1/\theta_\gamma)\lambda_{\alpha,\gamma}}{1/2-1/\sigma}}(n^{1/2}m^{-1/q} k^{-1/\sigma})^{\frac{1/\sigma-(1-\lambda_{\alpha,\gamma})/\theta_{\alpha}-\lambda_{\alpha,\gamma} /\theta_\gamma}{1/2-1/\sigma}},
$$
or
$$
m^{-\tau\left(1/p_\alpha-1/p_\beta -\frac{(1/\theta_\alpha-1/\theta_\beta)(1/2-1/q)}{1/2-1/\sigma}\right)\lambda _{\alpha,\beta}} \le m^{-\tau'\left(1/p_\alpha-1/p_\gamma -\frac{(1/\theta_\alpha-1/\theta_\gamma)(1/2-1/q)}{1/2-1/\sigma}\right)\lambda _{\alpha,\gamma}}.
$$
Taking into account (\ref{lab_q_lag}) and the condition $m\ge 2$, we get (\ref{tt2}).

The case $m^{2/q}k < n\le mk^{2/\sigma}$ is similar; here we use (\ref{rl3}) and (\ref{rl31}).

Let now $\max\{mk^{2/\sigma}, \, m^{2/q}k\} < n \le \frac{mk}{2}$. Then
(\ref{r_ab_1pgpa}) is equivalent to (see (\ref{rl4}), (\ref{rl41}))
$$
m^{\tau(1/p_\gamma-1/p_\alpha)}(n^{1/2}m^{-1/q}k^{-1/2})^{(1-\tau) \frac{1/p_\gamma-1/p_\alpha}{1/2-1/q}}\times $$$$\times k^{(1-\tau)(1/\theta_\gamma -1/\theta_\alpha)}(n^{1/2}m^{-1/2}k^{-1/\sigma})^{\frac{\tau(1/\theta_\gamma-1/\theta_\alpha)}{1/2-1/\sigma}} \le$$$$\le m^{\tau'(1/p_\gamma-1/p_\alpha)}(n^{1/2}m^{-1/q}k^{-1/2})^{(1-\tau') \frac{1/p_\gamma-1/p_\alpha}{1/2-1/q}} \times $$$$ \times k^{(1-\tau')(1/\theta_\gamma -1/\theta_\alpha)}(n^{1/2}m^{-1/2}k^{-1/\sigma})^{\frac{\tau'(1/\theta_\gamma-1/\theta_\alpha)}{1/2-1/\sigma}},
$$
or
$$
(n^{-1/2}m^{1/2}k^{1/2}) ^{\tau\left(\frac{1/p_\gamma-1/p_\alpha}{1/2-1/q} - \frac{1/\theta_\gamma-1/\theta_\alpha}{1/2-1/\sigma}\right)}\le (n^{-1/2}m^{1/2}k^{1/2}) ^{\tau'\left(\frac{1/p_\gamma-1/p_\alpha}{1/2-1/q} - \frac{1/\theta_\gamma-1/\theta_\alpha}{1/2-1/\sigma}\right)}.
$$
By (\ref{1pgpa2q}) and the condition $n<mk$, it is equivalent to
\begin{align}
\label{tt3} \tau \ge \tau'.
\end{align}
In order to prove (\ref{tt3}), we use (\ref{phi_compar}), (\ref{phi_compar1}) or (\ref{phi_compar2}). The corresponding inequality is as follows:
$$
\nu_\alpha^{1-\lambda_{\alpha,\beta}} \nu_\beta^{\lambda_{\alpha,\beta}} m^{1/q-1/p_{\alpha,\beta}}(n^{1/2}m^{-1/2} k^{-1/\sigma})^{\frac{1/\sigma-1/\theta_{\alpha,\beta}}{1/2-1/\sigma}} \le$$
$$\le \nu_\alpha^{1-\lambda_{\alpha,\gamma}} \nu_\gamma^{\lambda_{\alpha,\gamma}} m^{1/q-1/p_{\alpha,\gamma}}(n^{1/2}m^{-1/2} k^{-1/\sigma})^{\frac{1/\sigma-1/\theta_{\alpha,\gamma}}{1/2-1/\sigma}}.
$$
By (\ref{nua_nub_rl}), (\ref{rl4}), (\ref{nua_nub_rl1}),  (\ref{rl41}), it is equivalent to
$$
m^{-\tau\lambda_{\alpha,\beta}(1/p_\alpha-1/p_\beta)} (n^{1/2}m^{-1/q}k^{-1/2})^{-(1-\tau)\lambda_{\alpha,\beta}\frac{1/p_\alpha-1/p_\beta}{1/2-1/q}} \times$$$$\times k^{-(1-\tau) \lambda_{\alpha,\beta}(1/\theta_\alpha -1/\theta_\beta)} (n^{1/2} m^{-1/2} k^{-1/\sigma}) ^{-\tau \lambda_{\alpha,\beta} \frac{1/\theta_\alpha-1/\theta_\beta}{1/2-1/\sigma}} \times$$$$\times m^{-(1-\lambda_{\alpha,\beta})/p_\alpha -\lambda_{\alpha,\beta}/p_\beta} (n^{1/2} m^{-1/2}k^{-1/\sigma})^{-\frac{(1-\lambda _{\alpha,\beta})/\theta_\alpha +\lambda_{\alpha,\beta}/\theta_\beta}{1/2-1/\sigma}} \le
$$
$$
\le m^{-\tau'\lambda_{\alpha,\gamma}(1/p_\alpha-1/p_\gamma)} (n^{1/2}m^{-1/q}k^{-1/2})^{-(1-\tau')\lambda_{\alpha,\gamma}\frac{1/p_\alpha-1/p_\gamma}{1/2-1/q}} \times$$$$\times k^{-(1-\tau') \lambda_{\alpha,\gamma}(1/\theta_\alpha -1/\theta_\gamma)} (n^{1/2} m^{-1/2} k^{-1/\sigma}) ^{-\tau' \lambda_{\alpha,\gamma} \frac{1/\theta_\alpha-1/\theta_\gamma}{1/2-1/\sigma}} \times$$$$\times m^{-(1-\lambda_{\alpha,\gamma})/p_\alpha -\lambda_{\alpha,\gamma}/p_\gamma} (n^{1/2} m^{-1/2}k^{-1/\sigma})^{-\frac{(1-\lambda _{\alpha,\gamma})/\theta_\alpha +\lambda_{\alpha,\gamma}/\theta_\gamma}{1/2-1/\sigma}},
$$
or
$$
(n^{-1/2}m^{1/2}k^{1/2})^{(1-\tau)\lambda_{\alpha,\beta}\left(\frac{1/p_\alpha-1/p_\beta}{1/2-1/q} - \frac{1/\theta_\alpha-1/\theta_\beta}{1/2-1/\sigma}\right)} \le$$$$\le (n^{-1/2}m^{1/2}k^{1/2})^{(1-\tau')\lambda_{\alpha,\gamma}\left(\frac{1/p_\alpha-1/p_\gamma}{1/2-1/q} - \frac{1/\theta_\alpha-1/\theta_\gamma}{1/2-1/\sigma}\right)}.
$$
Taking into account (\ref{lab_q_lag}), we get (\ref{tt3}).

\subsection{The case $\Psi = \Psi_6$}

We have
\begin{align}
\label{psi_nabg}
\Psi = \nu_\alpha^{\tau_\alpha}\nu_\beta^{\tau_\beta}\nu_\gamma^{\tau_\gamma},
\end{align}
where $\alpha$, $\beta$, $\gamma\in A$ are different indices, $\tau_\alpha$, $\tau_\beta$, $\tau_\gamma >0$, $\tau_\alpha + \tau_\beta + \tau_\gamma = 1$,
$$
\frac 1q = \frac{\tau_\alpha}{p_\alpha} + \frac{\tau_\beta}{p_\beta} + \frac{\tau_\gamma}{p_\gamma}, \quad \frac{1}{\sigma} = \frac{\tau_\alpha}{\theta_\alpha} + \frac{\tau_\beta}{\theta_\beta} + \frac{\tau_\gamma}{\theta_\gamma}.
$$
Recall that the points $(1/p_\alpha, \, 1/\theta_\alpha)$, $(1/p_\beta, \, 1/\theta_\beta)$, $(1/p_\gamma, \, 1/\theta_\gamma)$ are the vertices of the triangle $\Delta_{\alpha,\beta,\gamma}\in {\cal R}$ (see (\ref{n6}), (\ref{psi6})).

We show that
\begin{align}
\label{dn_qs_est} d_n(M, \, l_{q,\sigma}^{m,k}) \underset{q,\sigma}{\gtrsim} \nu_\alpha^{\tau_\alpha}\nu_\beta^{\tau_\beta}\nu_\gamma^{\tau_\gamma}.
\end{align}

Let the numbers $r_{\alpha,\beta,\gamma}$, $l_{\alpha,\beta,\gamma}$ be defined by the equations
\begin{align}
\label{eq_syst} \frac{\nu_\alpha}{\nu_\beta} = r_{\alpha,\beta,\gamma}^{1/p_\alpha-1/p_\beta} l_{\alpha,\beta,\gamma}^{1/\theta_\alpha-1/\theta_\beta}, \quad \frac{\nu_\beta}{\nu_\gamma} = r_{\alpha,\beta,\gamma}^{1/p_\beta-1/p_\gamma} l_{\alpha,\beta,\gamma}^{1/\theta_\beta-1/\theta_\gamma}.
\end{align}
This system has the unique solution. In order to show this, it suffices to take a logarithm of the equations and notice that the vectors $(1/p_\alpha-1/p_\beta, \, 1/\theta_\alpha-1/\theta_\beta)$ and $(1/p_\beta-1/p_\gamma, \, 1/\theta_\beta-1/\theta_\gamma)$ are linearly independent, since $\Delta_{\alpha,\beta,\gamma}\in {\cal R}$.

We claim that
\begin{align}
\label{r_abg_1mk} 1\le l_{\alpha,\beta,\gamma} \le k, \quad 1\le r_{\alpha,\beta,\gamma} \le m,
\end{align}
\begin{align}
\label{n_mkr_abg_l_abg} n\le m^{\frac 2q} k^{\frac{2}{\sigma}} r_{\alpha,\beta,\gamma}^{1-\frac 2q} l_{\alpha,\beta,\gamma} ^{1-\frac{2}{\sigma}}.
\end{align}

Let $\tau_\alpha^i$, $\tau_\beta^i$, $\tau_\gamma^i\ge 0$, $\tau_\alpha^i + \tau_\beta^i + \tau_\gamma^i =1$, $i=1, \, 2$, and let
\begin{align}
\label{nu_i_p_i_t_i}
\nu_{(i)} = \nu_\alpha^{\tau_\alpha^i} \nu_\beta ^{\tau_\beta^i} \nu_\gamma^{\tau_\gamma^i}, \quad \frac{1}{p_{(i)}} = \frac{\tau_\alpha^i}{p_\alpha} + \frac{\tau_\beta^i}{p_\beta} + \frac{\tau_\gamma^i}{p_\gamma}, \quad \frac{1}{\theta_{(i)}} = \frac{\tau_\alpha^i}{\theta_\alpha} + \frac{\tau_\beta^i}{\theta_\beta} + \frac{\tau_\gamma^i}{\theta_\gamma}.
\end{align}
Then from (\ref{eq_syst}) it follows that
\begin{align}
\label{nu_1_nu_2} \frac{\nu_{(1)}}{\nu_{(2)}} = r_{\alpha,\beta,\gamma}^{1/p_{(1)}-1/p_{(2)}} l_{\alpha,\beta,\gamma}^{1/\theta_{(1)}-1/\theta_{(2)}}.
\end{align}

We check the first relation of (\ref{r_abg_1mk}) (the second can be proved similarly). Since $(1/q, \, 1/\sigma)$ is the interior point of $\Delta_{\alpha,\beta,\gamma}$, we have $$\Delta_{\alpha,\beta,\gamma}\cap [(1/q, \, 0), \, (1/q, \, 1)] = [(1/q, \, 1/\theta'), \, (1/q, \, 1/\theta'')],$$ where $\theta'>\sigma$, $\theta'' <\sigma$.

We set $\theta_{(1)} = \theta'$, $p_{(1)} = q$. Then there exist numbers $\tau^1_\alpha$, $\tau^1_\beta$, $\tau^1_\gamma\ge 0$ such that $\tau^1_\alpha+\tau^1_\beta+\tau^1_\gamma=1$ and the equation (\ref{nu_i_p_i_t_i}) holds for $p_{(1)}$, $\theta_{(1)}$; the number $\nu_{(1)}$ is also defined by (\ref{nu_i_p_i_t_i}).

We set $p_{(2)}=q$. The number $\theta_{(2)}$ is defined as follows. If $\theta''>2$ or $n \le mk^{2/\sigma}$, then we set $\theta_{(2)} = \theta''$; if $\theta''<2$ and $n > mk^{2/\sigma}$, then we set $\theta_{(2)}=2$ (in this case, $\sigma>2$). There exist the numbers $\tau^2_\alpha$, $\tau^2_\beta$, $\tau^2_\gamma\ge 0$ such that $\tau^2_\alpha+\tau^2_\beta+\tau^2_\gamma=1$ and the equality (\ref{nu_i_p_i_t_i}) holds for $p_{(2)}$, $\theta_{(2)}$; the number $\nu_{(2)}$ is also defined by (\ref{nu_i_p_i_t_i}).

In the both cases $\theta_{(2)}<\sigma < \theta_{(1)}$.
Hence there exists a number $\mu\in (0, \, 1)$ such that $\frac{1}{\sigma} = \frac{1-\mu}{\theta_{(1)}} + \frac{\mu}{\theta_{(2)}}$. Then $\nu_{\alpha}^{\tau_\alpha} \nu_\beta^{\tau_\beta} \nu_\gamma^{\tau_\gamma} = \nu_{(1)}^{1-\mu} \nu_{(2)}^{\mu}$. From (\ref{psi_nabg}) it follows that $\nu_{(1)}^{1-\mu} \nu_{(2)}^{\mu}=\Psi\le \Psi_1\le \nu_{(1)}\Phi(q, \, \theta_{(1)})$, i.e.,
$$
\nu_{(1)}^{1-\mu} \nu_{(2)}^{\mu} \le \nu_{(1)} k^{1/\sigma - 1/\theta_{(1)}}
$$
(see \eqref{phi1}); hence
$\frac{\nu_{(1)}}{\nu_{(2)}} \ge k^{1/\theta_{(1)}-1/\theta_{(2)}}$. This together with (\ref{nu_1_nu_2}) and the relations $p_{(1)} = p_{(2)} = q$, $\theta_{(1)}>\theta_{(2)}$ implies that $l_{\alpha,\beta,\gamma} \le k$.

We prove that $l_{\alpha,\beta,\gamma} \ge 1$. Let first $n \le mk^{2/\sigma}$. Then $\theta_{(2)} = \theta''$. From (\ref{psi_nabg}) it follows that $\nu_{(1)}^{1-\mu} \nu_{(2)}^{\mu}=\Psi\le \Psi_1 \le \nu_{(2)}\Phi(q, \, \theta_{(2)})$; i.e.,
$$
\nu_{(1)}^{1-\mu} \nu_{(2)}^{\mu} \le \nu_{(2)}
$$
(see \eqref{phi2}). Hence $\frac{\nu_{(1)}}{\nu_{(2)}} \le 1$. This together with (\ref{nu_1_nu_2}) and the relations $p_{(1)} = p_{(2)} = q$, $\theta_{(1)}>\theta_{(2)}$ yields that $l_{\alpha,\beta,\gamma} \ge 1$.

Let $n > mk^{2/\sigma}$. Then $\sigma>2$. If $\theta''>2$, then $\theta_{(2)} = \theta''$. From (\ref{psi_nabg}) and (\ref{phi2}) it follows that
\begin{align}
\label{nu12_th12}
\nu_{(1)}^{1-\mu} \nu_{(2)}^{\mu} \le \nu_{(2)}(n^{-1/2} m^{1/2}k^{1/\sigma})^{\frac{1/\theta_{(2)}-1/\sigma}{1/2-1/\sigma}};
\end{align}
hence $$\frac{\nu_{(1)}}{\nu_{(2)}} \le (n^{1/2}m^{-1/2} k^{-1/\sigma}) ^{\frac{1/\theta_{(1)}-1/\theta_{(2)}}{1/2-1/\sigma}}.$$ This together with (\ref{nu_1_nu_2}) and the relations $p_{(1)} = p_{(2)} = q$, $\theta_{(1)}>\theta_{(2)}$ implies that 
\begin{align}
\label{l_abg_ge}
l_{\alpha,\beta,\gamma} \ge (n^{1/2}m^{-1/2} k^{-1/\sigma})^{\frac{1}{1/2-1/\sigma}}\ge 1.
\end{align}

Let now $\theta''<2$. We show that (\ref{nu12_th12}) holds with $\theta_{(2)}=2$; this implies (\ref{l_abg_ge}). We have:
$$
\Delta_{\alpha,\beta,\gamma} \cap [(0, \, 1/2), \, (1/2, \, 1/2)] = [(1/p', \, 1/2), \, (1/p'', \, 1/2)],
$$
where $p'>q$, $p''<q$ for $q>2$, $p''=2$ for $q=2$. There exist numbers $\tilde \tau'_\alpha$, $\tilde \tau'_\beta$, $\tilde \tau'_\gamma\ge 0$, $\tilde \tau''_\alpha$, $\tilde \tau''_\beta$, $\tilde \tau''_\gamma\ge 0$ such that $\tilde \tau'_\alpha + \tilde \tau'_\beta + \tilde \tau'_\gamma =1$, $\tilde \tau''_\alpha + \tilde \tau''_\beta + \tilde \tau''_\gamma =1$, 
$$
\frac{1}{p'} = \frac{\tilde \tau'_\alpha}{p_\alpha} + \frac{\tilde \tau'_\beta}{p_\beta} + \frac{\tilde \tau'_\gamma}{p_\gamma}, \quad \frac 12 = \frac{\tilde \tau'_\alpha}{\theta_\alpha} + \frac{\tilde \tau'_\beta}{\theta_\beta} + \frac{\tilde \tau'_\gamma}{\theta_\gamma},
$$
$$
\frac{1}{p''} = \frac{\tilde \tau''_\alpha}{p_\alpha} + \frac{\tilde \tau''_\beta}{p_\beta} + \frac{\tilde \tau''_\gamma}{p_\gamma}, \quad \frac 12 = \frac{\tilde \tau''_\alpha}{\theta_\alpha} + \frac{\tilde \tau''_\beta}{\theta_\beta} + \frac{\tilde \tau''_\gamma}{\theta_\gamma}.
$$
We set $\nu' = \nu_\alpha^{\tilde \tau'_\alpha}\nu_\beta^{\tilde \tau'_\beta}\nu_\gamma^{\tilde \tau'_\gamma}$, $\nu'' = \nu_\alpha^{\tilde \tau''_\alpha}\nu_\beta^{\tilde \tau''_\beta}\nu_\gamma^{\tilde \tau''_\gamma}$. 

There exist a number $\lambda \in (0, \, 1]$ such that $\frac 1q =\frac{1-\lambda}{p'} + \frac{\lambda}{p''}$. Then $\nu_{(2)}= (\nu')^{1-\lambda}(\nu'')^\lambda$. From (\ref{psi_nabg}) it follows that $\nu_{(1)}^{1-\mu} \nu_{(2)}^\mu=\Psi\le \Psi_4 \le \nu' \Phi(p', \, 2)$. For $p''>2$ and $p''=2$ we have, respectively, $\nu_{(1)}^{1-\mu} \nu_{(2)}^\mu=\Psi\le \Psi_4 \le \nu'' \Phi(p'', \, 2)$, $\nu_{(1)}^{1-\mu} \nu_{(2)}^\mu=\Psi\le \Psi_7 \le \nu'' \Phi(p'', \, 2)$. Therefore,
$$
\nu_{(1)}^{1-\mu} \nu_{(2)}^\mu \le \nu' m^{1/q-1/p'}n^{-1/2}m^{1/2}k^{1/\sigma}, \quad \nu_{(1)}^{1-\mu} \nu_{(2)}^\mu \le \nu'' m^{1/q-1/p''}n^{-1/2}m^{1/2}k^{1/\sigma}
$$
(see (\ref{phi2}), (\ref{phi4})). This implies that
$$
\nu_{(1)}^{1-\mu} \nu_{(2)}^\mu \le (\nu')^{1-\lambda} (\nu'')^{\lambda} m^{1/q-(1-\lambda)/p' - \lambda/p''}n^{-1/2}m^{1/2}k^{1/\sigma} =
$$
$$
= \nu_{(2)}n^{-1/2}m^{1/2}k^{1/\sigma}.
$$
Since $\theta_{(2)}=2$, we get (\ref{nu12_th12}). This completes the proof of the inequality $l_{\alpha,\beta,\gamma}\ge 1$.

{\bf Remark.} We have obtained that
\begin{align}
\label{l_abg_n2m} l_{\alpha,\beta,\gamma} \ge (n^{1/2}m^{-1/2}k^{-1/\sigma})^{\frac{1}{1/2-1/\sigma}} \quad \text{for }n> mk^{2/\sigma};
\end{align}
it can be similarly proved that
\begin{align}
\label{m_abg_n2m} r_{\alpha,\beta,\gamma} \ge (n^{1/2}m^{-1/q}k^{-1/2})^{\frac{1}{1/2-1/q}} \quad \text{for }n> m^{2/q}k.
\end{align}

\vskip 0.3cm

Now we check (\ref{n_mkr_abg_l_abg}). If $n\le m^{2/q}k^{2/\sigma}$, this inequality is obvious. Now we consider the case $n> m^{2/q}k^{2/\sigma}$. Let first $q>2$, $\sigma>2$. We have
$$
\Delta_{\alpha,\beta,\gamma} \cap \left\{ \left(\frac 1p, \, \frac{1}{\theta}\right):\; \frac{1/p-1/q}{1/2-1/q} = \frac{1/\theta-1/\sigma}{1/2-1/\sigma}, \; 2\le p \le q\right\} = [(1/q, \, 1/\sigma), \, (1/p_*, \, 1/\theta_*)].
$$
Then $p_*<q$, $\theta_*<\sigma$. There exist numbers $\tau_\alpha^*$, $\tau_\beta^*$, $\tau_\gamma^*\ge 0$ such that $\tau_\alpha^* + \tau_\beta^* + \tau_\gamma^* = 1$, $\frac{1}{p_*} = \frac{\tau_\alpha^*}{p_\alpha} + \frac{\tau_\beta^*}{p_\beta} + \frac{\tau_\gamma^*}{p_\gamma}$, $\frac{1}{\theta_*} = \frac{\tau_\alpha^*}{\theta_\alpha} + \frac{\tau_\beta^*}{\theta_\beta} + \frac{\tau_\gamma^*}{\theta_\gamma}$.

From (\ref{psi_nabg}) it follows that $\nu_\alpha^{\tau_\alpha} \nu_\beta^{\tau_\beta} \nu_\gamma^{\tau_\gamma}=\Psi \le \min\{\Psi_5, \, \Psi_7\}\le \nu_\alpha^{\tau^*_\alpha} \nu_\beta^{\tau^*_\beta} \nu_\gamma^{\tau^*_\gamma} \Phi(p_*, \, \theta_*)$; i.e.,
$$
\nu_\alpha^{\tau_\alpha} \nu_\beta^{\tau_\beta} \nu_\gamma^{\tau_\gamma} \le \nu_\alpha^{\tau^*_\alpha} \nu_\beta^{\tau^*_\beta} \nu_\gamma^{\tau^*_\gamma} (n^{-1/2}m^{1/q} k^{1/\sigma}) ^{\frac{1/p_*-1/q}{1/2-1/q}};
$$
notice that $\Phi(p_*, \, \theta_*)=(n^{-1/2}m^{1/q} k^{1/\sigma}) ^{\frac{1/p_*-1/q}{1/2-1/q}}$ for $m^{2/q}k^{2/\sigma} \le n \le \frac{mk}{2}$, since $\frac{1/p_*-1/q}{1/2-1/q} = \frac{1/\theta_*-1/\sigma}{1/2-1/\sigma}$.

Applying (\ref{nu_1_nu_2}), we get
$$
r_{\alpha,\beta,\gamma}^{1/q-1/p_*} l_{\alpha,\beta,\gamma} ^{1/\sigma - 1/\theta_*} \le (n^{-1/2}m^{1/q} k^{1/\sigma}) ^{\frac{1/p_*-1/q}{1/2-1/q}}.
$$
Since $\frac{1/p_*-1/q}{1/2-1/q} = \frac{1/\theta_*-1/\sigma}{1/2-1/\sigma}$ and $p_*<q$, we have
$$
r_{\alpha,\beta,\gamma}^{1/q-1/2} l_{\alpha,\beta,\gamma} ^{1/\sigma - 1/2} \le n^{-1/2}m^{1/q} k^{1/\sigma},
$$
which implies (\ref{n_mkr_abg_l_abg}).

Let now $\sigma=2$. We prove that $$n\le m^{\frac2q}k r_{\alpha,\beta,\gamma}^{1-\frac 2q}.$$ If $n\le m^{2/q}k$, it is obvious. If $n> m^{2/q}k$, then $q>2$; the desired inequality follows from (\ref{m_abg_n2m}).

The case $q=2$ is similar; here we apply (\ref{l_abg_n2m}).

Let $r = \lceil r_{\alpha,\beta,\gamma} \rceil$, $l = \lceil l_{\alpha,\beta,\gamma} \rceil$, $W = \nu_\alpha^{\tau_\alpha} \nu_\beta^{\tau_\beta} \nu_\gamma^{\tau_\gamma} r^{-1/q}l^{-1/\sigma} V^{m,k} _{r,l}$. We show that $W\subset 4M$. Then
$$
d_n(M, \, l^{m,k}_{q,\sigma}) \gtrsim d_n(W, \, l^{m,k}_{q,\sigma}) \stackrel{(\ref{dn_vmk1}), (\ref{n_mkr_abg_l_abg})}{\underset{q,\sigma}{\gtrsim}} \nu_\alpha^{\tau_\alpha} \nu_\beta^{\tau_\beta} \nu_\gamma^{\tau_\gamma} r^{-1/q}l^{-1/\sigma} r^{1/q}l^{1/\sigma} = \nu_\alpha^{\tau_\alpha} \nu_\beta^{\tau_\beta} \nu_\gamma^{\tau_\gamma};
$$
i.e., (\ref{dn_qs_est}) holds. In order to prove the inclusion, it suffices to check that
\begin{align}
\label{abgd} \nu_\alpha^{\tau_\alpha} \nu_\beta^{\tau_\beta} \nu_\gamma^{\tau_\gamma} r_{\alpha,\beta,\gamma}^{1/p_\delta-1/q} l_{\alpha,\beta,\gamma}^{1/\theta_\delta-1/\sigma} \le \nu_\delta, \quad \delta \in A.
\end{align}
If $\delta \in \{\alpha,\, \beta, \, \gamma\}$, it follows from (\ref{nu_1_nu_2}).

Let $\delta \notin \{\alpha,\, \beta, \, \gamma\}$. Notice that $(1/q, \, 1/\sigma)$ is the interior point of the convex hull of $(1/p_\delta, \, 1/\theta_\delta)$ and one of the sides of the triangle $\Delta_{\alpha,\beta,\gamma}$ (since the points are in a general position). Without lost of generality, $(1/q, \, 1/\sigma)$ is the interior point of the convex hull of $(1/p_\alpha, \, 1/\theta_\alpha)$, $(1/p_\beta, \, 1/\theta_\beta)$ and $(1/p_\delta, \, 1/\theta_\delta)$. Then there exist the numbers $\tau_\alpha'$, $\tau_\beta'$, $\tau_\delta'> 0$ such that $\tau_\alpha' + \tau_\beta' + \tau_\delta' = 1$ and $\frac 1q = \frac{\tau_\alpha'}{p_\alpha} + \frac{\tau_\beta'}{p_\beta} + \frac{\tau_\delta'}{p_\delta}$, $\frac{1}{\sigma} = \frac{\tau_\alpha'}{\theta_\alpha} + \frac{\tau_\beta'}{\theta_\beta} + \frac{\tau_\delta'}{\theta_\delta}$. From (\ref{psi_nabg}) it follows that
\begin{align}
\label{nabg_nabd} \nu_\alpha^{\tau_\alpha} \nu_\beta^{\tau_\beta} \nu_\gamma^{\tau_\gamma} \le \nu_\alpha^{\tau'_\alpha} \nu_\beta^{\tau'_\beta} \nu_\delta^{\tau'_\delta}.
\end{align}
We define the numbers $r_{\alpha,\beta,\delta}$ and $l_{\alpha,\beta,\delta}$ by the equations
\begin{align}
\label{eq_syst1} \frac{\nu_\alpha}{\nu_\beta} = r_{\alpha,\beta,\delta}^{1/p_\alpha-1/p_\beta} l_{\alpha,\beta,\delta}^{1/\theta_\alpha-1/\theta_\beta}, \quad \frac{\nu_\beta}{\nu_\delta} = r_{\alpha,\beta,\delta}^{1/p_\beta-1/p_\delta} l_{\alpha,\beta,\delta}^{1/\theta_\beta-1/\theta_\delta}.
\end{align}
The vectors $(1/p_\alpha-1/p_\beta, \, 1/\theta_\alpha - 1/\theta_\beta)$ are $(1/p_\beta-1/p_\delta, \, 1/\theta_\beta-1/\theta_\delta)$ linearly independent, since $\Delta_{\alpha,\beta,\delta}\in {\cal R}$.
Hence the system has the unique solution.

Dividing the both sides of (\ref{nabg_nabd}) into $\nu_\alpha$, we get
$$
\left(\frac{\nu_\beta}{\nu_\alpha}\right)^{\tau_\beta} \left(\frac{\nu_\gamma}{\nu_\alpha}\right)^{\tau_\gamma} \le \left(\frac{\nu_\beta}{\nu_\alpha}\right)^{\tau'_\beta} \left(\frac{\nu_\delta}{\nu_\alpha}\right)^{\tau'_\delta}.
$$
This together with (\ref{eq_syst}) and (\ref{eq_syst1}) yield that
$$
r_{\alpha,\beta,\gamma}^{1/q-1/p_\alpha} l_{\alpha,\beta,\gamma}^{1/\sigma-1/\theta_\alpha} \le r_{\alpha,\beta,\delta}^{1/q-1/p_\alpha} l_{\alpha,\beta,\delta}^{1/\sigma-1/\theta_\alpha}.
$$
Similarly we get the inequality
$$
r_{\alpha,\beta,\gamma}^{1/q-1/p_\beta} l_{\alpha,\beta,\gamma}^{1/\sigma-1/\theta_\beta} \le r_{\alpha,\beta,\delta}^{1/q-1/p_\beta} l_{\alpha,\beta,\delta}^{1/\sigma-1/\theta_\beta}.
$$
Hence, for all $\tau \in [0, \, 1]$, we have
\begin{align}
\label{rabg_q_g} r_{\alpha,\beta,\gamma}^{1/q-(1-\tau)/p_\alpha-\tau/p_\beta} l_{\alpha,\beta,\gamma}^{1/\sigma-(1-\tau)/\theta_\alpha-\tau/\theta_\beta} \le r_{\alpha,\beta,\delta}^{1/q-(1-\tau)/p_\alpha-\tau/p_\beta} l_{\alpha,\beta,\delta}^{1/\sigma-(1-\tau)/\theta_\alpha-\tau/\theta_\beta}.
\end{align}

We choose $\tau \in (0, \, 1)$ such that the vectors $(1/q-(1-\tau)/p_\alpha-\tau/p_\beta, \, 1/\sigma-(1-\tau)/\theta_\alpha-\tau/\theta_\beta)$ and $(1/p_\delta-(1-\tau)/p_\alpha-\tau/p_\beta, \, 1/\theta_\delta-(1-\tau)/\theta_\alpha-\tau/\theta_\beta)$ are codirectional. Then from (\ref{rabg_q_g}) it follows that
$$
r_{\alpha,\beta,\gamma}^{1/p_\delta-(1-\tau)/p_\alpha-\tau/p_\beta} l_{\alpha,\beta,\gamma}^{1/\theta_\delta-(1-\tau)/\theta_\alpha-\tau/\theta_\beta} \le r_{\alpha,\beta,\delta}^{1/p_\delta-(1-\tau)/p_\alpha-\tau/p_\beta} l_{\alpha,\beta,\delta}^{1/\theta_\delta-(1-\tau)/\theta_\alpha-\tau/\theta_\beta}.
$$
Applying (\ref{eq_syst}) and (\ref{eq_syst1}), we get
$$
\left(\frac{\nu_\beta}{\nu_\alpha}\right)^{\tau_\beta (1-\tau)} \left(\frac{\nu_\gamma}{\nu_\alpha}\right)^{\tau_\gamma (1-\tau)} r_{\alpha,\beta,\gamma} ^{(1-\tau)[\tau_\beta(1/p_\alpha-1/p_\beta) + \tau_\gamma(1/p_\alpha-1/p_\gamma)]} l_{\alpha,\beta,\gamma} ^{(1-\tau)[\tau_\beta(1/\theta_\alpha-1/\theta_\beta) + \tau_\gamma(1/\theta_\alpha-1/\theta_\gamma)]}\times
$$
$$
\times \left(\frac{\nu_\alpha}{\nu_\beta}\right)^{\tau_\alpha \tau} \left(\frac{\nu_\gamma}{\nu_\beta}\right)^{\tau_\gamma \tau} r_{\alpha,\beta,\gamma} ^{\tau[\tau_\alpha(1/p_\beta-1/p_\alpha) + \tau_\gamma(1/p_\beta-1/p_\gamma)]} l_{\alpha,\beta,\gamma} ^{\tau[\tau_\alpha(1/\theta_\beta-1/\theta_\alpha) + \tau_\gamma(1/\theta_\beta-1/\theta_\gamma)]} \times
$$
$$
\times r_{\alpha,\beta,\gamma}^{1/p_\delta-(1-\tau)/p_\alpha-\tau/p_\beta} l_{\alpha,\beta,\gamma}^{1/\theta_\delta-(1-\tau)/\theta_\alpha-\tau/\theta_\beta} \le
$$
$$
\le \left(\frac{\nu_\beta}{\nu_\alpha}\right)^{\tau'_\beta (1-\tau)} \left(\frac{\nu_\delta}{\nu_\alpha}\right)^{\tau'_\delta (1-\tau)} r_{\alpha,\beta,\delta} ^{(1-\tau)[\tau'_\beta(1/p_\alpha-1/p_\beta) + \tau'_\delta(1/p_\alpha-1/p_\delta)]} l_{\alpha,\beta,\delta} ^{(1-\tau)[\tau'_\beta(1/\theta_\alpha-1/\theta_\beta) + \tau'_\delta(1/\theta_\alpha-1/\theta_\delta)]}\times
$$
$$
\times \left(\frac{\nu_\alpha}{\nu_\beta}\right)^{\tau'_\alpha \tau} \left(\frac{\nu_\delta}{\nu_\beta}\right)^{\tau'_\delta \tau} r_{\alpha,\beta,\delta} ^{\tau[\tau'_\alpha(1/p_\beta-1/p_\alpha) + \tau'_\delta(1/p_\beta-1/p_\delta)]} l_{\alpha,\beta,\delta} ^{\tau[\tau'_\alpha(1/\theta_\beta-1/\theta_\alpha) + \tau'_\delta(1/\theta_\beta-1/\theta_\delta)]} \times
$$
$$
\times r_{\alpha,\beta,\delta}^{1/p_\delta-(1-\tau)/p_\alpha-\tau/p_\beta} l_{\alpha,\beta,\delta}^{1/\theta_\delta-(1-\tau)/\theta_\alpha-\tau/\theta_\beta}.
$$
Hence,
$$
\frac{\nu_\alpha^{\tau_\alpha} \nu_\beta^{\tau_\beta} \nu_\gamma^{\tau_\gamma}}{\nu_\alpha^{1-\tau} \nu_\beta^\tau} r_{\alpha,\beta,\gamma}^{1/p_\delta-1/q} l_{\alpha,\beta,\gamma}^{1/\theta_\delta-1/\sigma} \le \frac{\nu_\alpha^{\tau'_\alpha} \nu_\beta^{\tau'_\beta} \nu_\delta^{\tau'_\delta}}{\nu_\alpha^{1-\tau} \nu_\beta^\tau} r_{\alpha,\beta,\delta}^{1/p_\delta-1/q} l_{\alpha,\beta,\delta}^{1/\theta_\delta-1/\sigma}.
$$
We apply (\ref{eq_syst1}) once again and get $\nu_\alpha^{\tau'_\alpha} \nu_\beta^{\tau'_\beta} \nu_\delta^{\tau'_\delta}r_{\alpha,\beta,\delta}^{1/p_\delta-1/q} l_{\alpha,\beta,\delta}^{1/\theta_\delta-1/\sigma}=\nu_\delta$; this implies (\ref{abgd}).

\subsection{The case $\Psi=\Psi_7$}

We may assume that $q>2$ or $\sigma>2$; otherwise, $\Psi_7=\Psi_6$.

There exist the numbers $\tau_\alpha$, $\tau_\beta$, $\tau_\gamma >0$ such that $\tau_\alpha+\tau _\beta + \tau_\gamma =1$, $\frac 12 = \frac{\tau_\alpha}{p_\alpha} + \frac{\tau_\beta}{p_\beta} +\frac{\tau_\gamma}{p_\gamma}$, $\frac 12 = \frac{\tau_\alpha}{\theta_\alpha} + \frac{\tau_\beta}{\theta_\beta} +\frac{\tau_\gamma}{\theta_\gamma}$. 

Let first $n> m^{2/q}k^{2/\sigma}$. Then 
\begin{align}
\label{psi_7}
\Psi = \nu_\alpha^{\tau_\alpha}\nu_\beta^{\tau_\beta}\nu_\gamma^{\tau_\gamma} n^{-\frac 12}m^{\frac 1q} k^{\frac{1}{\sigma}}.
\end{align}
We prove that
\begin{align}
\label{dn_qs_est_2} d_n(M, \, l_{q,\sigma}^{m,k}) \underset{q,\sigma}{\gtrsim} \nu_\alpha^{\tau_\alpha}\nu_\beta^{\tau_\beta}\nu_\gamma^{\tau_\gamma}n^{-\frac 12}m^{\frac 1q} k^{\frac{1}{\sigma}}.
\end{align}

Let the numbers $r_{\alpha,\beta,\gamma}$, $l_{\alpha,\beta,\gamma}$ be defined as the solution of the system (\ref{eq_syst}); this solution is well-defined, and (\ref{nu_1_nu_2}) holds, where $\nu_{(i)}$, $p_{(i)}$, $\theta_{(i)}$ $(i=1, \, 2)$ are defined by (\ref{nu_i_p_i_t_i}).

We claim that
\begin{align}
\label{r_abg_1mk_2} 1\le r_{\alpha,\beta,\gamma} \le m, \quad 1\le l_{\alpha,\beta,\gamma} \le k,
\end{align}
\begin{align}
\label{n_mkr_abg_l_abg_2} n\ge m^{\frac 2q} k^{\frac{2}{\sigma}} r_{\alpha,\beta,\gamma}^{1-\frac 2q} l_{\alpha,\beta,\gamma} ^{1-\frac{2}{\sigma}}.
\end{align}

We check the first inequality of (\ref{r_abg_1mk_2}); the second is similar.

Let 
$$
\Delta_{\alpha,\beta,\gamma}\cap [(0, \, 1/2), \, (1, \, 1/2)] = [(1/p', \, 1/2), \, (1/p'', \, 1/2)].
$$
Then $p'>2$, $p''<2$. We set $p_{(2)} = p''$. If $p'<q$ or $n\ge mk^{2/\sigma}$, then we set $p_{(1)} = p'$. If $p'>q$ or $n< mk^{2/\sigma}$, we set $p_{(1)} = q$. Notice that if $n<mk^{2/\sigma}$, then $q>2$; otherwise, we get the contradiction with the condition $n\ge m^{2/q}k^{2/\sigma}$. Hence $p_{(1)}>2>p_{(2)}$. We also set $\theta_{(1)}=\theta_{(2)}=2$.

There exist nonnegative numbers $\tau'_\alpha$, $\tau'_\beta$, $\tau'_\gamma$, $\tau''_\alpha$, $\tau''_\beta$, $\tau''_\gamma$ such that $\tau'_\alpha+\tau'_\beta+\tau'_\gamma=1$, $\tau''_\alpha+\tau''_\beta+\tau''_\gamma=1$, $\frac{1}{p_{(1)}} = \frac{\tau'_\alpha}{p_\alpha} + \frac{\tau'_\beta}{p_\beta} + \frac{\tau'_\gamma}{p_\gamma}$, $\frac{1}{\theta_{(1)}} = \frac{\tau'_\alpha}{\theta_\alpha} + \frac{\tau'_\beta}{\theta_\beta} + \frac{\tau'_\gamma}{\theta_\gamma}$, $\frac{1}{p_{(2)}} = \frac{\tau''_\alpha}{p_\alpha} + \frac{\tau''_\beta}{p_\beta} + \frac{\tau''_\gamma}{p_\gamma}$, $\frac{1}{\theta_{(2)}} = \frac{\tau''_\alpha}{\theta_\alpha} + \frac{\tau''_\beta}{\theta_\beta} + \frac{\tau''_\gamma}{\theta_\gamma}$. In addition, there exists a number $\mu\in (0, \, 1)$ such that $\frac 12 = \frac{1-\mu}{p_{(1)}} + \frac{\mu}{p_{(2)}}$.

We set $\nu_{(1)} = \nu_\alpha^{\tau'_\alpha}\nu_\beta^{\tau'_\beta} \nu_{\gamma}^{\tau'_\gamma}$, $\nu_{(2)} = \nu_\alpha^{\tau''_\alpha}\nu_\beta^{\tau''_\beta} \nu_{\gamma}^{\tau''_\gamma}$. Then $\nu_\alpha^{\tau_\alpha} \nu_\beta^{\tau_{\beta}} \nu_{\gamma}^{\tau_\gamma} = \nu_{(1)}^{1-\mu} \nu_{(2)}^\mu$.

From (\ref{psi_7}) and (\ref{phi6}) it follows that 
$$
\nu_{(1)}^{1-\mu} \nu_{(2)}^\mu n^{-1/2}m^{1/q} k^{1/\sigma} \le \nu_{(2)}n^{-1/2}m^{1/q} k^{1/\sigma}.
$$
Hence $\frac{\nu_{(1)}}{\nu_{(2)}}\le 1$. This together with
\begin{align}
\label{nu1nu1rp1}
\frac{\nu_{(1)}}{\nu_{(2)}} \stackrel{(\ref{nu_1_nu_2})}{=} r_{\alpha,\beta,\gamma} ^{1/p_{(1)}-1/p_{(2)}},
\end{align}
$p_{(1)}>p_{(2)}$ implies that $r_{\alpha,\beta,\gamma} \ge 1$.

Let $n\ge mk^{2/\sigma}$. Then from 
(\ref{psi_7}), (\ref{phi2}), (\ref{phi4}) it follows that 
$$
\nu_{(1)}^{1-\mu} \nu_{(2)}^\mu n^{-1/2}m^{1/q} k^{1/\sigma} \le \nu_{(1)} m^{1/q-1/p_{(1)}} n^{-1/2}m^{1/2} k^{1/\sigma}.
$$
Hence, $\frac{\nu_{(1)}}{\nu_{(2)}} \ge m^{1/p_{(1)} - 1/p_{(2)}}$; this together with (\ref{nu1nu1rp1}) and the inequality $p_{(1)}>p_{(2)}$ implies that $r_{\alpha,\beta,\gamma}\le m$.

Let $n< mk^{2/\sigma}$. If $p'< q$, then from 
(\ref{psi_7}) and (\ref{phi4}) it follows that 
$$
\nu_{(1)}^{1-\mu} \nu_{(2)}^\mu n^{-1/2}m^{1/q} k^{1/\sigma} \le \nu_{(1)} (n^{-1/2}m^{1/q} k^{1/\sigma})^{\frac{1/p_{(1)}-1/q}{1/2-1/q}}.
$$
Hence, $\frac{\nu_{(1)}}{\nu_{(2)}} \ge (n^{1/2}m^{-1/q}k^{-1/\sigma})^{\frac{1/p_{(1)} - 1/p_{(2)}}{1/2-1/q}}$; this yields that $$r_{\alpha,\beta,\gamma}\stackrel{(\ref{nu1nu1rp1})}{\le} (n^{1/2}m^{-1/q}k^{-1/\sigma})^{\frac{1}{1/2-1/q}} \le m$$ (since $n\le mk^{2/\sigma}$).

If $p'>q$, then similarly as in the case $\Psi=\Psi_6$ we can prove that 
$$
\nu_{(1)}^{1-\mu} \nu_{(2)}^\mu n^{-1/2}m^{1/q} k^{1/\sigma} \le \nu_{(1)}
$$
(here we use the equality $\Phi(q,\, \theta)\stackrel{(\ref{phi2})}{=}1$ for $1\le \theta\le \sigma$); this implies that $r_{\alpha,\beta,\gamma}\le (n^{1/2}m^{-1/q}k^{-1/\sigma})^{\frac{1}{1/2-1/q}} \le m$.

Notice that for $n< mk^{2/\sigma}$ we get the inequality
\begin{align}
\label{bbbbb} r_{\alpha,\beta,\gamma}\le (n^{1/2}m^{-1/q}k^{-1/\sigma})^{\frac{1}{1/2-1/q}}.
\end{align}
Similarly for $n< m^{2/q}k$ we have
\begin{align}
\label{bbbbb1} l_{\alpha,\beta,\gamma}\le (n^{1/2}m^{-1/q}k^{-1/\sigma})^{\frac{1}{1/2-1/\sigma}}.
\end{align}

Now we prove (\ref{n_mkr_abg_l_abg_2}). 

Let first $q>2$, $\sigma>2$. We have
$$
\Delta_{\alpha,\beta,\gamma} \cap \left\{ \left(\frac 1p, \, \frac{1}{\theta}\right):\; \frac{1/p-1/q}{1/2-1/q} = \frac{1/\theta-1/\sigma}{1/2-1/\sigma}, \; 2\le p \le q\right\} = [(1/p_*, \, 1/\theta_*), \, (1/2, \, 1/2)].
$$
Then $p_*>2$, $\theta_*>2$. There exist numbers $\tau_\alpha^*$, $\tau_\beta^*$, $\tau_\gamma^*\ge 0$ such that $\tau_\alpha^* + \tau_\beta^* + \tau_\gamma^* = 1$, $\frac{1}{p_*} = \frac{\tau_\alpha^*}{p_\alpha} + \frac{\tau_\beta^*}{p_\beta} + \frac{\tau_\gamma^*}{p_\gamma}$, $\frac{1}{\theta_*} = \frac{\tau_\alpha^*}{\theta_\alpha} + \frac{\tau_\beta^*}{\theta_\beta} + \frac{\tau_\gamma^*}{\theta_\gamma}$.

From (\ref{psi_7}) it follows that
$$
\nu_\alpha^{\tau_\alpha} \nu_\beta^{\tau_\beta} \nu_\gamma^{\tau_\gamma}n^{-1/2}m^{1/q} k^{1/\sigma} \le \min \{\Psi_5, \, \Psi_6\}\le \nu_\alpha^{\tau^*_\alpha} \nu_\beta^{\tau^*_\beta} \nu_\gamma^{\tau^*_\gamma} (n^{-1/2}m^{1/q} k^{1/\sigma}) ^{\frac{1/p_*-1/q}{1/2-1/q}}.
$$
Applying (\ref{nu_1_nu_2}), we get
$$
r_{\alpha,\beta,\gamma}^{1/2-1/p_*} l_{\alpha,\beta,\gamma} ^{1/2 - 1/\theta_*} \le (n^{1/2}m^{-1/q} k^{-1/\sigma}) ^{\frac{1/2-1/p_*}{1/2-1/q}}.
$$
Since $\frac{1/2-1/p_*}{1/2-1/q} = \frac{1/2-1/\theta_*}{1/2-1/\sigma}$ and $p_*>2$, we have
$$
r_{\alpha,\beta,\gamma}^{1/2-1/q} l_{\alpha,\beta,\gamma} ^{1/2-1/\sigma} \le n^{1/2}m^{-1/q} k^{-1/\sigma};
$$
this implies (\ref{n_mkr_abg_l_abg_2}).

Let $\sigma=2$. We claim that $n\ge m^{\frac 2q}k r_{\alpha,\beta,\gamma}^{1-\frac 2q}$. It follows from (\ref{bbbbb}) (for $\sigma=2$ the condition $n< mk^{2/\sigma}$ is obvious).

The case $q=2$ is similar; here we use (\ref{bbbbb1}).

We set $r = \lfloor r_{\alpha,\beta,\gamma}\rfloor$, $l = \lfloor l_{\alpha,\beta,\gamma} \rfloor$, $W = \nu_\alpha^{\tau_\alpha} \nu_\beta^{\tau_\beta} \nu_\gamma^{\tau_\gamma} r^{-1/2}l^{-1/2} V^{m,k} _{r,l}$. Arguing as in the case $\Psi=\Psi_6$ and replacing $(1/q, \, 1/\sigma)$ with $(1/2, \, 1/2)$, we get that $W \subset 4M$. Hence,
$$
d_n(M, \, l_{q,\sigma}^{m,k}) \gtrsim d_n(W, \, l_{q,\sigma}^{m,k}) \stackrel{(\ref{dn_vmk1}), (\ref{n_mkr_abg_l_abg_2})}{\underset{q,\sigma}{\gtrsim}} \nu_\alpha^{\tau_\alpha} \nu_\beta^{\tau_\beta} \nu_\gamma^{\tau_\gamma} r^{-1/2}l^{-1/2} n^{-1/2}m^{1/q}k^{1/\sigma} r^{1/2}l^{1/2} =$$$$= \nu_\alpha^{\tau_\alpha} \nu_\beta^{\tau_\beta} \nu_\gamma^{\tau_\gamma} n^{-1/2}m^{1/q}k^{1/\sigma};
$$
i.e., (\ref{dn_qs_est_2}) holds.

Let now $n\le m^{2/q}k^{2/\sigma}$. Then $\Phi(p, \, \theta) = 1$ for $1\le p\le q$, $1\le \theta\le \sigma$. Arguing as in the proof of  (\ref{r_abg_1mk_2}), we get that $\Psi=\Psi_3$ or $\Psi = \Psi_4$, and the estimating of the widths can be reduced to the case, which is already considered.

\section{Proof of the lower estimate: $A$ is finite, the set $\{(1/p_\alpha, \, 1/\theta_\alpha)\}_{\alpha\in A}$ is arbitrary}

Let $A$ be a fimite set. Now we consider arbitrary points $(1/p_\alpha, \, 1/\theta_\alpha)$, $\alpha\in A$. We prove that (\ref{dn_main}) holds.

\begin{Lem}
\label{lem_a1}
Let $M=\cap _{\alpha \in A} \nu_\alpha B^{m,k}_{p_\alpha,\theta_\alpha}$, $\tilde M=\cap _{\alpha \in A} \nu_\alpha B^{m,k}_{\tilde p_\alpha,\tilde\theta_\alpha}$, $|1/p_\alpha-1/\tilde p_\alpha|<\frac{\log 2}{\log (mk)}$, $|1/\theta_\alpha-1/\tilde \theta_\alpha|<\frac{\log 2}{\log (mk)}$. Then 
\begin{align}
\label{12p_tilp_incl}
\frac 12 M\subset \tilde M \subset 2M.
\end{align}
\end{Lem}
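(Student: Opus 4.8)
The plan is to reduce both set inclusions to a single pointwise comparison of the two mixed norms, proving that on $\R^{mk}$ a sufficiently small perturbation of the exponents distorts the $l_{p,\theta}^{m,k}$-norm by a factor at most $2$. The basic tool is the elementary nesting inequality $\|a\|_{l_t^N}\le\|a\|_{l_s^N}\le N^{1/s-1/t}\|a\|_{l_t^N}$, valid for $1\le s\le t\le\infty$ and $a\in\R^N$. Applying it to the inner $\ell^m$-norm of each column $a_j=(x_{i,j})_{i=1}^m$ gives, whether $\tilde p_\alpha$ exceeds $p_\alpha$ or not, the two-sided bound
$$
m^{-|1/p_\alpha-1/\tilde p_\alpha|}\|a_j\|_{l_{p_\alpha}^m}\le\|a_j\|_{l_{\tilde p_\alpha}^m}\le m^{|1/p_\alpha-1/\tilde p_\alpha|}\|a_j\|_{l_{p_\alpha}^m}.
$$

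Next I would feed this into the outer $\ell^k$-norm. Because that norm is monotone and positively homogeneous under coordinatewise scaling, replacing the sequence $(\|a_j\|_{l_{p_\alpha}^m})_j$ by $(\|a_j\|_{l_{\tilde p_\alpha}^m})_j$ costs at most a factor $m^{|1/p_\alpha-1/\tilde p_\alpha|}$, and a second application of the nesting inequality in dimension $k$, passing from $l_{\theta_\alpha}^k$ to $l_{\tilde\theta_\alpha}^k$, costs a further factor $k^{|1/\theta_\alpha-1/\tilde\theta_\alpha|}$. Composing the two steps yields, for every $x\in\R^{mk}$ and every $\alpha\in A$,
$$
\Bigl(m^{|1/p_\alpha-1/\tilde p_\alpha|}k^{|1/\theta_\alpha-1/\tilde\theta_\alpha|}\Bigr)^{-1}\|x\|_{l_{p_\alpha,\theta_\alpha}^{m,k}}\le\|x\|_{l_{\tilde p_\alpha,\tilde\theta_\alpha}^{m,k}}\le m^{|1/p_\alpha-1/\tilde p_\alpha|}k^{|1/\theta_\alpha-1/\tilde\theta_\alpha|}\|x\|_{l_{p_\alpha,\theta_\alpha}^{m,k}}.
$$

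Now I would invoke the hypotheses. Since $|1/p_\alpha-1/\tilde p_\alpha|<\frac{\log 2}{\log(mk)}$ and $|1/\theta_\alpha-1/\tilde\theta_\alpha|<\frac{\log 2}{\log(mk)}$, the distortion factor satisfies $m^{|1/p_\alpha-1/\tilde p_\alpha|}k^{|1/\theta_\alpha-1/\tilde\theta_\alpha|}<(mk)^{\log 2/\log(mk)}=2$, so that $\frac12\|x\|_{l_{p_\alpha,\theta_\alpha}^{m,k}}<\|x\|_{l_{\tilde p_\alpha,\tilde\theta_\alpha}^{m,k}}<2\|x\|_{l_{p_\alpha,\theta_\alpha}^{m,k}}$ for all $\alpha$ (the degenerate case $mk=1$ is trivial, all norms coinciding). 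Translating into the sets: if $x\in\frac12 M$, i.e. $\|x\|_{l_{p_\alpha,\theta_\alpha}^{m,k}}\le\nu_\alpha/2$ for every $\alpha$, then $\|x\|_{l_{\tilde p_\alpha,\tilde\theta_\alpha}^{m,k}}<\nu_\alpha$, whence $x\in\tilde M$; and if $x\in\tilde M$, then $\|x\|_{l_{p_\alpha,\theta_\alpha}^{m,k}}<2\|x\|_{l_{\tilde p_\alpha,\tilde\theta_\alpha}^{m,k}}\le2\nu_\alpha$, whence $x\in 2M$.

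The argument is essentially routine; the only point demanding care is the bookkeeping in the middle step, ensuring that the exponents of $m$ and $k$ in the combined distortion factor come out exactly as $|1/p_\alpha-1/\tilde p_\alpha|$ and $|1/\theta_\alpha-1/\tilde\theta_\alpha|$, so that the two hypotheses conspire to collapse $(mk)^{\log 2/\log(mk)}$ to precisely the constant $2$ rather than something larger. Note that all estimates are uniform in $\alpha$, so no appeal to the finiteness of $A$ is required for this lemma.
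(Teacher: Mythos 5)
Your proof is correct and follows essentially the same route as the paper: the paper's one-line argument rests on the inclusions $m^{-|1/p_\alpha-1/\tilde p_\alpha|}k^{-|1/\theta_\alpha-1/\tilde \theta_\alpha|}B^{m,k}_{p_\alpha,\theta_\alpha}\subset B^{m,k}_{\tilde p_\alpha,\tilde\theta_\alpha} \subset m^{|1/p_\alpha-1/\tilde p_\alpha|}k^{|1/\theta_\alpha-1/\tilde \theta_\alpha|}B^{m,k}_{p_\alpha,\theta_\alpha}$, which are exactly the norm-distortion bounds you derive from the nesting inequality, followed by the same observation that $(mk)^{\log 2/\log(mk)}=2$. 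Your write-up merely makes explicit the details (column-wise comparison, monotonicity of the outer norm, the degenerate case $mk=1$) that the paper leaves to the reader.
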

\begin{proof}
It follows from the inclusions 
$$
m^{-|1/p_\alpha-1/\tilde p_\alpha|}k^{-|1/\theta_\alpha-1/\tilde \theta_\alpha|}B^{m,k}_{p_\alpha,\theta_\alpha}\subset B^{m,k}_{\tilde p_\alpha,\tilde\theta_\alpha} \subset m^{|1/p_\alpha-1/\tilde p_\alpha|}k^{|1/\theta_\alpha-1/\tilde \theta_\alpha|}B^{m,k}_{p_\alpha,\theta_\alpha}.
$$
\end{proof}

\begin{Lem}
\label{lem_a2}
Let $\alpha \in A$, $p_\alpha^N, \, \theta_\alpha^N\in [1, \, +\infty]$, and let $(1/p_\alpha^N, \, 1/\theta_\alpha^N) \underset{N\to \infty}{\to} (1/p_\alpha, \, 1/\theta_\alpha)$. For each $\beta \ne \alpha$ we set $(p^N_\beta, \, \theta^N_\beta) = (p_\beta, \, \theta_\beta)$. We denote $P_N = \{(p^N_\beta, \,  \theta^N_\beta)\}_{\beta \in A}$. Let the values $\Psi(P)$ and $\Psi(P_N)$ be defined by \eqref{psi_def}. Then
\begin{align}
\label{psi_lim_1}
\frac{\Psi(P_N)}{\Psi(P)} \underset{N \to \infty}{\to} 1.
\end{align}
\end{Lem}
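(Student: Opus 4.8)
The plan is to prove the stronger statement $\Psi(P_N)\to\Psi(P)$; since $A$ is finite, $\Psi_0(P)=\inf_{\delta\in A}\nu_\delta\Phi(p_\delta,\theta_\delta)<\infty$ and every term is positive, so $\Psi(P),\Psi(P_N)\in(0,\infty)$ and \eqref{psi_lim_1} follows at once. Denote by $\alpha_0$ the one index whose address $(1/p_{\alpha_0},1/\theta_{\alpha_0})$ moves; all witnesses not involving $\alpha_0$ have identical values for $P$ and $P_N$. Because $A$ is finite, each $\mathcal{N}_j$ is a \emph{finite} set, so every $\Psi_j$ is a minimum, and $\Psi(P)=\min_{(j,w)}f_j^w(P)$, where $w$ ranges over the finitely many witnesses (a single index for $j=0$, a pair for $1\le j\le5$, a triple for $j\in\{6,7\}$) and $f_j^w(P)$ is the corresponding product of powers of the $\nu$'s times a value of $\Phi$, read as $+\infty$ when $w\notin\mathcal{N}_j(P)$. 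I first record three routine facts. (1) The map $(1/p,1/\theta)\mapsto\Phi(p,\theta;q,\sigma,m,k,n)$ extends continuously to $[0,1]^2$: one need only check that \eqref{phi1}--\eqref{phi6} agree on the common boundaries of their domains (at $p=q$, $\theta=\sigma$, $p=2$, $\theta=2$, and on the diagonal $\omega_{p,q}=\omega_{\theta,\sigma}$), the exponents being continuous in the addresses. (2) The interpolation parameters $\hat\lambda,\hat\mu,\tilde\lambda,\tilde\mu,\lambda,\tau_\alpha,\dots$ solve linear systems whose coefficients converge with the data, so they and the resulting $\Phi$-values depend continuously on the data wherever defined. (3) Membership in each $\mathcal{N}_j$ is an \emph{open} condition: for $j\in\{1,2,3,4\}$ it says a fixed value ($1/q$, $1/\sigma$, or $1/2$) lies strictly between two coordinates; for $j=5$ it says the segment joining the two addresses crosses the diagonal $D=\{\,(1/p,1/\theta):\frac{1/p-1/q}{1/2-1/q}=\frac{1/\theta-1/\sigma}{1/2-1/\sigma}\,\}$ transversally at an interior point of the box $(2,q)\times(2,\sigma)$ with $\alpha$ off $D$; for $j\in\{6,7\}$ it says the relevant point lies in the interior of a non-degenerate triangle.

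For $\limsup_N\Psi(P_N)\le\Psi(P)$, let $(j^*,w^*)$ realize $\Psi(P)=f_{j^*}^{w^*}(P)$, so $w^*\in\mathcal{N}_{j^*}(P)$ when $j^*\ge1$. By the openness in (3), $w^*\in\mathcal{N}_{j^*}(P_N)$ for all large $N$, and by the continuity in (1)--(2) the associated parameters and $\Phi$-value converge, whence $f_{j^*}^{w^*}(P_N)\to f_{j^*}^{w^*}(P)=\Psi(P)$. Since $\Psi(P_N)\le f_{j^*}^{w^*}(P_N)$, the bound follows. (For $j^*=0$ the witness is a single index and the same continuity applies, trivially when that index is not $\alpha_0$.)

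The substance is the reverse bound $\liminf_N\Psi(P_N)\ge\Psi(P)$, where boundary behaviour matters. Passing to a subsequence and using finiteness, I may assume a single type $j$ and a single witness $w$ realize $\Psi(P_N)=f_j^w(P_N)$ for all $N$, with $w\in\mathcal{N}_j(P_N)$ and all parameters in $(0,1)$. If $w\in\mathcal{N}_j(P)$, then $f_j^w(P_N)\to f_j^w(P)\ge\Psi(P)$ by continuity and we are done. Otherwise the parameters approach the boundary of the admissible region (a parameter tends to $0$ or $1$, or an interpolation point reaches the edge of its box); I will show $\lim_N f_j^w(P_N)$ equals the value $f_{j'}^{w'}(P)$ of a strictly simpler, admissible witness $w'$. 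A parameter tending to $0$ or $1$ collapses a pair to an endpoint (a $\Psi_0$ witness) or a triple to an edge (a $\Psi_1$ or $\Psi_2$ pair), while an interpolation point reaching an edge sends the $\Phi$-value to the matching boundary formula; in each instance the limiting address at which $\Phi$ is evaluated agrees, by (1), with the address of the reduced witness, so the values coincide. This reproduces the reductions ``if $\tilde\lambda=1$ then $\Psi=\Psi_0$, otherwise\dots'' used throughout \S 3. Since $j'<j$, a finite descent terminates at a single-index ($\Psi_0$) witness, which is always admissible, giving $\lim_N f_j^w(P_N)=f_{j'}^{w'}(P)\ge\Psi(P)$. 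Combining the two bounds yields $\Psi(P_N)\to\Psi(P)$.

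The main obstacle is precisely this last step: verifying that \emph{every} way a witness can degenerate as its parameters hit the boundary produces a limiting value equal to an admissible term of $\Psi(P)$, and not something smaller. This is a finite but case-heavy check (one case per boundary facet of each $\mathcal{N}_j$, together with the continuity-of-$\Phi$ verifications in (1)), but each case merely re-enacts a reduction already carried out in \S 3, so no genuinely new estimate is required.
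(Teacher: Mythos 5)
Your proposal is correct and follows essentially the same route as the paper's proof: continuity of $\Phi$ on $[0,1]^2$, openness of the membership conditions in each ${\cal N}_j$ together with continuity of the interpolation parameters (giving $\limsup_N \Psi(P_N)\le \Psi(P)$), and a case analysis showing that witnesses admissible for $P_N$ but degenerating in the limit converge to values that are themselves admissible terms for $P$ (giving $\liminf_N \Psi(P_N)\ge \Psi(P)$). The only difference is presentational: the paper enumerates the degenerate cases explicitly (its assertions 4--6, e.g.\ a pair in ${\cal N}_1(P_{N_j})\setminus{\cal N}_1(P)$ forces $p_\alpha=q$, $\hat\lambda^{N_j}_{\alpha,\beta}\to 0$ and the term tends to the $\Psi_0$-term $\nu_\alpha\Phi(p_\alpha,\theta_\alpha)$), while you describe the collapse mechanism generically and defer the same finite check, which does go through exactly as you indicate.
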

\begin{proof}
The following assertions yield (\ref{psi_lim_1}):
\begin{enumerate}
\item The function $(t, \, s) \mapsto \Phi(1/t, \, 1/s)$ is continuous on $[0, \, 1]^2$.

\item Let $(\alpha, \, \beta) \in {\cal N}_1(P)$. Then $(\alpha, \, \beta) \in {\cal N}_1(P_N)$ for large $N\in \N$. Let the numbers $\hat\lambda_{\alpha,\beta}^N$, $\hat\theta^N_{\alpha,\beta}$ be defined by the equations $\frac 1q=\frac{1-\hat\lambda_{\alpha,\beta}^N}{p_\alpha^N} +\frac{\hat\lambda_{\alpha,\beta}^N}{p_\beta}$, $\frac{1}{\hat\theta^N_{\alpha,\beta}}=\frac{1-\hat\lambda_{\alpha,\beta}^N}{\theta_\alpha^N} +\frac{\hat\lambda_{\alpha,\beta}^N}{\theta_\beta}$. Then $\hat \lambda_{\alpha,\beta}^N \underset{N\to \infty}{\to} \hat \lambda_{\alpha,\beta}$. Hence, $\nu_\alpha^{1-\hat \lambda_{\alpha,\beta}^N} \nu_\beta^{\hat \lambda_{\alpha,\beta}^N} \Phi(q, \, \hat\theta^N_{\alpha,\beta}) \underset{N\to \infty}{\to} \nu_\alpha^{1-\hat \lambda_{\alpha,\beta}} \nu_\beta^{\hat \lambda_{\alpha,\beta}} \Phi(q, \, \hat\theta_{\alpha,\beta})$. The similar assertions hold for $(\alpha, \, \beta) \in {\cal N}_j(P)$, $2\le j\le 5$.

\item Let $(\alpha, \, \beta, \, \gamma) \in {\cal N}_6(P)$. Then $(\alpha, \, \beta, \, \gamma) \in {\cal N}_6(P_N)$ for large $N\in \N$. Let the numbers $\tau_\alpha^N$, $\tau_\beta^N$, $\tau_\gamma^N$ be defined by the equations $\tau_\alpha^N+\tau_\beta^N+\tau_\gamma^N=1$, $\frac 1q=\frac{\tau_\alpha^N}{p_\alpha^N}+\frac{\tau_\beta^N}{p_\beta}+\frac{\tau_\gamma^N}{p_\gamma}$, $\frac{1}{\sigma}=\frac{\tau_\alpha^N}{\theta_\alpha^N}+\frac{\tau_\beta^N}{\theta_\beta}+\frac{\tau_\gamma^N}{\theta_\gamma}$. Then $(\tau_\alpha^N, \, \tau_\beta^N, \, \tau_\gamma^N) \underset{N \to \infty}{\to} (\tau_\alpha, \, \tau_\beta, \, \tau_\gamma)$. Hence, $\nu_\alpha^{\tau_\alpha^N}\nu_\beta^{\tau_\beta^N}\nu_\gamma^{\tau_\gamma^N}\Phi(q, \, \sigma) \underset{N\to \infty}{\to} \nu_\alpha^{\tau_\alpha}\nu_\beta^{\tau_\beta}\nu_\gamma^{\tau_\gamma}\Phi(q, \, \sigma)$. The similar assertion holds for $(\alpha, \, \beta, \, \gamma) \in {\cal N}_7(P)$.

\item Let $(\alpha, \beta)\notin {\cal N}_1(P)$, and let $(\alpha, \beta)\in {\cal N}_1(P_{N_j})$ for some subsequence $\{N_j\}_{j\in \N}$. We define the numbers $\hat \lambda^{N_j}_{\alpha,\beta}$, $\hat \theta^{N_j}_{\alpha,\beta}$ as in assertion 2. Then $p_\beta\ne q$, $p_\alpha = q$ and $\hat \lambda^{N_j}_{\alpha,\beta} \underset{j\to \infty}{\to} 0$; hence, $\nu_\alpha^{1-\hat \lambda^{N_j}_{\alpha,\beta}} \nu_\beta^{\hat \lambda^{N_j}_{\alpha,\beta}} \Phi(q, \, \hat \theta_{\alpha,\beta}^{N_j}) \underset{j \to \infty}{\to} \nu_\alpha \Phi(p_\alpha, \, \theta_\alpha)$. The similar assertions hold if $(\alpha, \beta)\notin {\cal N}_i(P)$, and $(\alpha, \beta)\in {\cal N}_i(P_{N_j})$ for some subsequence $\{N_j\}_{j\in \N}$, where $2\le i\le 4$.

\item Let $(\alpha, \beta)\notin {\cal N}_5(P)$, and let $(\alpha, \beta)\in {\cal N}_5(P_{N_j})$ for some subsequence $\{N_j\}_{j\in \N}$. We define the numbers $\lambda_{\alpha,\beta}^{N_j}$, $p_{\alpha,\beta}^{N_j}$ and $\theta_{\alpha,\beta}^{N_j}$ by the equations $\frac{1}{p_{\alpha,\beta}^{N_j}} = \frac{1-\lambda_{\alpha,\beta}^{N_j}}{p^{N_j}_\alpha} + \frac{\lambda_{\alpha,\beta}^{N_j}}{p_\beta}$, $\frac{1}{\theta_{\alpha,\beta}^{N_j}} = \frac{1-\lambda_{\alpha,\beta}^{N_j}}{\theta^{N_j}_\alpha} + \frac{\lambda_{\alpha,\beta}^{N_j}}{\theta_\beta}$, $\frac{1/p_{\alpha,\beta}^{N_j}-1/q}{1/2-1/q}=\frac{1/\theta_{\alpha,\beta}^{N_j}-1/\sigma}{1/2-1/\sigma}$. The following cases are possible:
\begin{itemize}
\item $\frac{1/p_\beta-1/q}{1/2-1/q}\ne \frac{1/\theta_\beta-1/\sigma}{1/2-1/\sigma}$, $\frac{1/p_\alpha-1/q}{1/2-1/q}= \frac{1/\theta_\alpha-1/\sigma}{1/2-1/\sigma}$ and $\lambda_{\alpha,\beta}^{N_j} \underset{j\to \infty}{\to} 0$; hence, $$\nu_\alpha^{1- \lambda^{N_j}_{\alpha,\beta}} \nu_\beta^{\lambda^{N_j}_{\alpha,\beta}} \Phi(p^{N_j}_{\alpha,\beta}, \, \theta_{\alpha,\beta}^{N_j}) \underset{j \to \infty}{\to} \nu_\alpha \Phi(p_\alpha, \, \theta_\alpha).$$
\item $\frac{1/p_\beta-1/q}{1/2-1/q}\ne \frac{1/\theta_\beta-1/\sigma}{1/2-1/\sigma}$, $\frac{1/p_\alpha-1/q}{1/2-1/q}\ne \frac{1/\theta_\alpha-1/\sigma}{1/2-1/\sigma}$, and $[(1/p_\alpha,\, 1/\theta_\alpha), \,  (1/p_\beta,\, 1/\theta_\beta)]$ intersects with $[(1/q, \, 1/\sigma), \, (1/2, \, 1/2)]$ at the point $(1/q, \, 1/\sigma)$ or $(1/2, \, 1/2)$. In this case, $\nu_\alpha^{1- \lambda^{N_j}_{\alpha,\beta}} \nu_\beta^{\lambda^{N_j}_{\alpha,\beta}} \Phi(p^{N_j}_{\alpha,\beta}, \, \theta_{\alpha,\beta}^{N_j})$ converges to $\nu_\alpha^{1-\hat \lambda_{\alpha,\beta}} \nu_\beta^{\hat \lambda_{\alpha,\beta}} \Phi(q, \, \hat\theta_{\alpha,\beta})$, $\nu_\alpha^{1-\hat \mu_{\alpha,\beta}} \nu_\beta^{\hat \mu_{\alpha,\beta}} \Phi(\hat p_{\alpha,\beta}, \, \sigma)$, $\nu_\alpha^{1-\tilde \lambda_{\alpha,\beta}} \nu_\beta^{\tilde \lambda_{\alpha,\beta}} \Phi(2, \, \tilde\theta_{\alpha,\beta})$ or $\nu_\alpha^{1-\tilde \mu_{\alpha,\beta}} \nu_\beta^{\tilde \mu_{\alpha,\beta}} \Phi(\tilde p_{\alpha,\beta}, \, 2)$.
\end{itemize}

\item Let $(\alpha, \beta, \, \gamma)\notin {\cal N}_6(P)$, and let $(\alpha, \beta, \, \gamma)\in {\cal N}_6(P_{N_j})$ for some subsequence $\{N_j\}_{j\in \N}$. Let the numbers $\tau_\alpha^{N_j}$, $\tau_\beta^{N_j}$, $\tau_\gamma^{N_j}$ be defined by the equations $\tau_\alpha^{N_j}+\tau_\beta^{N_j}+\tau_\gamma^{N_j}=1$, $\frac{1}{q} = \frac{\tau_\alpha^{N_j}}{p_\alpha^{N_j}} + \frac{\tau_\beta^{N_j}}{p_\beta} + \frac{\tau_\gamma^{N_j}}{p_\gamma}$, $\frac{1}{\sigma} = \frac{\tau_\alpha^{N_j}}{\theta_\alpha^{N_j}} + \frac{\tau_\beta^{N_j}}{\theta_\beta} + \frac{\tau_\gamma^{N_j}}{\theta_\gamma}$. Then $\Delta_{\alpha,\beta,\gamma}\in {\cal R}$, $(1/q, \, 1/\sigma)$ is the boundary point of $\Delta_{\alpha,\beta,\gamma}$; the sequence $\nu_\alpha^{\tau_\alpha^{N_j}} \nu_\beta ^{\tau_\beta^{N_j}} \nu_\gamma^{\tau_\gamma^{N_j}}$ converges to one of the following values (see \eqref{n1}, \eqref{n2}): $\nu_\alpha^{1-\hat \lambda_{\alpha,\beta}} \nu_\beta^{\hat \lambda_{\alpha,\beta}} = \nu_\alpha^{1-\hat \lambda_{\alpha,\beta}} \nu_\beta^{\hat \lambda_{\alpha,\beta}} \Phi(q, \, \sigma)$, $\nu_\alpha^{1-\hat \mu_{\alpha,\beta}} \nu_\beta^{\hat \mu_{\alpha,\beta}} = \nu_\alpha^{1-\hat \mu_{\alpha,\beta}} \nu_\beta^{\hat \mu_{\alpha,\beta}} \Phi(q, \, \sigma)$, $\nu_\alpha^{1-\hat \lambda_{\alpha,\gamma}} \nu_\gamma^{\hat \lambda_{\alpha,\gamma}} = \nu_\alpha^{1-\hat \lambda_{\alpha,\gamma}} \nu_\gamma^{\hat \lambda_{\alpha,\gamma}} \Phi(q, \, \sigma)$, $\nu_\alpha^{1-\hat \mu_{\alpha,\gamma}} \nu_\gamma^{\hat \mu_{\alpha,\gamma}} = \nu_\alpha^{1-\hat \mu_{\alpha,\gamma}} \nu_\gamma^{\hat \mu_{\alpha,\gamma}} \Phi(q, \, \sigma)$ or $\nu_\alpha = \nu_\alpha \Phi(p_\alpha, \, \theta_\alpha)$. The similar assertion holds if $(\alpha, \beta, \, \gamma)\notin {\cal N}_7(P)$, and $(\alpha, \beta, \, \gamma)\in {\cal N}_7(P_{N_j})$ for some subsequence $\{N_j\}_{j\in \N}$.
\end{enumerate}
This completes the proof.
\end{proof}

Replacing the points $(1/p_\alpha, \, 1/\theta_\alpha)$ one by one and applying Lemma \ref{lem_a2}, we ontain the set $\tilde P=\{(\tilde p_\alpha, \, \tilde \theta_\alpha)\}_{\alpha\in A}$ such that the points $\{(1/\tilde p_\alpha, \, 1/\tilde \theta_\alpha)\}_{\alpha\in A}$ are in a general position, $|1/p_\alpha-1/\tilde p_\alpha|<\frac{\log 2}{\log (mk)}$, $|1/\theta_\alpha-1/\tilde \theta_\alpha|<\frac{\log 2}{\log (mk)}$ and $\frac 12 \Psi(P)\le \Psi(\tilde P) \le 2\Psi(P)$. This together with Lemma \ref{lem_a1} and the estimates from \S 2, 3 yields that
$$
d_n(M, \, l_{q,\sigma}^{m,k}) \asymp d_n(\tilde M, \, l_{q,\sigma}^{m,k}) \underset{q,\sigma}{\asymp} \Psi(\tilde P) \asymp \Psi(P).
$$

\section{Proof of the lower estimate: $A$ is arbitrary}

It suffices to consider the case $\inf _{\alpha\in A} \nu_\alpha >0$.

Let $S=\{(1/p_\alpha, \, 1/\theta_\alpha):\; \alpha \in A\}$. There is a finite covering $\{Q_i\}_{i=1}^l$ of the set $S$ by squares with side length $\delta$; we may assume that $Q_i\cap S\ne \varnothing$ for each $i=1, \, \dots, \, l$. Given $i=1, \, \dots, \, l$, we choose a point $\alpha_i\in A$ such that $(1/p_{\alpha_i}, \, 1/\theta_{\alpha_i}) \in Q_i\cap S$ and $\nu_{\alpha_i} \le \frac 32\cdot\inf \{\nu_{\alpha}:\; \alpha \in A, \; (1/p_\alpha, \, 1/\theta_\alpha) \in Q_i\cap S\}$. Let $P' = \{(p_{\alpha_i}, \, \theta_{\alpha_i})\}_{1\le i\le l}$, $$M'= \cap _{i=1}^l \nu_{\alpha_i} B^{m,k}_{p_{\alpha_i}, \theta_{\alpha_i}}.$$
Then $M\subset M'$. We claim that $M' \subset 2M$ for sufficiently small $\delta>0$. Indeed, let $\alpha \in A$. Then $(1/p_\alpha, \, 1/\theta_\alpha) \in Q_i$ for some $i=1, \, \dots, \, l$. We have
$$
\nu_{\alpha} B^{m,k}_{p_\alpha, \, \theta_\alpha} \supset \frac{2\nu_{\alpha_i}}{3} m^{-|1/p_\alpha-1/p_{\alpha_i}|} k^{-|1/\theta_\alpha-1/\theta_{\alpha_i}|} B^{m,k}_{p_{\alpha_i},\theta_{\alpha_i}} \supset \frac 12 \nu_{\alpha_i}B^{m,k}_{p_{\alpha_i},\theta_{\alpha_i}}
$$
if $(mk)^\delta \le \frac 43$.

Now we apply the estimates for the widths from \S 2, 4, and get
$$
\Psi(P) \le \Psi(P') \underset{q,\sigma}{\lesssim} d_n(M', \, l^{m,k}_{q,\sigma}) \lesssim d_n(M, \, l^{m,k}_{q,\sigma}) \stackrel{(\ref{main_up_est})}{\underset{q,\sigma}{\lesssim}} \Psi(P).
$$
Hence, $$d_n(M, \, l^{m,k}_{q,\sigma}) \underset{q,\sigma}{\asymp} \Psi(P).$$
This completes the proof.

\begin{Biblio}
\bibitem{dir_ull} S. Dirksen, T. Ullrich, ``Gelfand numbers related to structured sparsity and Besov space embeddings with small mixed smoothness'', {\it J. Compl.}, {\bf 48} (2018), 69--102.

\bibitem{galeev2} E.M. Galeev,  ``Kolmogorov widths of classes of periodic functions of one and several variables'', {\it Math. USSR-Izv.},  {\bf 36}:2 (1991),  435--448.

\bibitem{galeev5} E.M. Galeev, ``Kolmogorov $n$-width of some finite-dimensional sets in a mixed measure'', {\it Math. Notes}, {\bf 58}:1 (1995),  774--778.

\bibitem{galeev6} E. M. Galeev, ``An estimate for the Kolmogorov widths of classes $H^r_p$ of periodic functions
of several variables of small smoothnes'', {\it Theory of functions and its applications. Proc. conf. young scientists}. 1986. P. 17--24 (in Russian).

\bibitem{galeev1} E.M.~Galeev, ``The Kolmogorov diameter of the intersection of classes of periodic
functions and of finite-dimensional sets'', {\it Math. Notes},
{\bf 29}:5 (1981), 382--388.

\bibitem{galeev4} E.M. Galeev,  ``Widths of functional classes and finite-dimensional sets'', {\it Vladikavkaz. Mat. Zh.}, {\bf 13}:2 (2011), 3--14.

\bibitem{garn_glus} A.Yu. Garnaev and E.D. Gluskin, ``On widths of the Euclidean ball'', {\it Dokl.Akad. Nauk SSSR}, {bf 277}:5 (1984), 1048--1052 [Sov. Math. Dokl. 30 (1984), 200--204]

\bibitem{gluskin1} E.D. Gluskin, ``On some finite-dimensional problems of the theory of diameters'', {\it Vestn. Leningr. Univ.}, {\bf 13}:3 (1981), 5--10 (in Russian).

\bibitem{bib_gluskin} E.D. Gluskin, ``Norms of random matrices and diameters
of finite-dimensional sets'', {\it Math. USSR-Sb.}, {\bf 48}:1
(1984), 173--182.

\bibitem{izaak1} A. D. Izaak, ``Kolmogorov widths in finite-dimensional spaces with mixed norms'', {\it Math. Notes}, {\bf 55}:1 (1994), 30--36.

\bibitem{izaak2} A. D. Izaak, ``Widths of H\"{o}lder--Nikol'skij classes and finite-dimensional subsets in spaces with mixed norm'', {\it Math. Notes}, {\bf 59}:3 (1996), 328--330.

\bibitem{kashin_oct} B.S. Kashin, ``The diameters of octahedra'', {\it Usp. Mat. Nauk} {\bf 30}:4 (1975), 251--252 (in Russian).

\bibitem{bib_kashin} B.S. Kashin, ``The widths of certain finite-dimensional
sets and classes of smooth functions'', {\it Math. USSR-Izv.},
{\bf 11}:2 (1977), 317--333.

\bibitem{k_p_s} A.N. Kolmogorov, A. A. Petrov, Yu. M. Smirnov, ``A formula of Gauss in the theory of the method of least squares'', {\it Izvestiya Akad. Nauk SSSR. Ser. Mat.} {\bf 11} (1947), 561--566 (in Russian).

\bibitem{mal_rjut} Yu. V. Malykhin, K. S. Ryutin, ``The Product of Octahedra is Badly Approximated in the $l_{2,1}$-Metric'', {\it Math. Notes}, {\bf 101}:1 (2017), 94--99.

\bibitem{pietsch1} A. Pietsch, ``$s$-numbers of operators in Banach space'', {\it Studia Math.},
{\bf 51} (1974), 201--223.

\bibitem{kniga_pinkusa} A. Pinkus, {\it $n$-widths
in approximation theory.} Berlin: Springer, 1985.

\bibitem{stech_poper} S. B. Stechkin, ``On the best approximations of given classes of functions by arbitrary polynomials'', {\it Uspekhi Mat. Nauk, 9:1(59)} (1954) 133--134 (in Russian).

\bibitem{stesin} M.I. Stesin, ``Aleksandrov diameters of finite-dimensional sets
and of classes of smooth functions'', {\it Dokl. Akad. Nauk SSSR},
{\bf 220}:6 (1975), 1278--1281 [Soviet Math. Dokl.].

\bibitem{nvtp} V.M. Tikhomirov, {\it Some questions in approximation theory}, Izdat. Moskov. Univ., Moscow, 1976.

\bibitem{itogi_nt} V.M. Tikhomirov, ``Theory of approximations''. In: {\it Current problems in
mathematics. Fundamental directions.} vol. 14. ({\it Itogi Nauki i
Tekhniki}) (Akad. Nauk SSSR, Vsesoyuz. Inst. Nauchn. i Tekhn.
Inform., Moscow, 1987), pp. 103--260 [Encycl. Math. Sci. vol. 14,
1990, pp. 93--243].

\bibitem{vas_besov} A. A. Vasil'eva, ``Kolmogorov and linear widths of the weighted Besov classes with singularity at the origin'', {\it J. Approx. Theory}, {\bf 167} (2013), 1--41.

\bibitem{vas_int_sob} A. A. Vasil'eva, ``Kolmogorov widths of an intersection of a finite family of Sobolev classes'', {\it  Izvestiya: Mathematics} (to appear).

\bibitem{vas_ball_inters} A. A. Vasil'eva, ``Kolmogorov widths of intersections of finite-dimensional balls'', {\it J. Compl.}, {\bf 72} (2022), article 101649.

\bibitem{vas_mix2} A. A. Vasil'eva, ``Estimates for the Kolmogorov widths of an intersection of two balls in a mixed norm'', arXiv:2303.13147v1.

\bibitem{vyb_06} J. Vyb\'{\i}ral, {\it Function spaces with dominating mixed smoothness}, Dissertationes Math. (Rozprawy Mat.) 436, 1--73 (2006).

\end{Biblio}
\end{document}